\documentclass[11pt]{amsart}
\usepackage{amsmath}
\usepackage{amscd}
\usepackage{amssymb}
\usepackage{amsthm}
\usepackage{dsfont} %for bbd version of 1
\usepackage{appendix} 
\usepackage{verbatim}
\usepackage[normalem]{ulem} %for underlining things, used 
\usepackage[all]{xy} %for complicated diagrams
\usepackage{enumerate}
\usepackage{stackrel}
\usepackage{color}
%for beilinson symbol
\input cyracc.def
\DeclareFontFamily{U}{russian}{}
\DeclareFontShape{U}{russian}{m}{n}
        { <5><6> wncyr5
        <7><8><9> wncyr7
        <10><10.95><12><14.4><17.28><20.74><24.88> wncyr10 }{}
\DeclareSymbolFont{Russian}{U}{russian}{m}{n}
\DeclareSymbolFontAlphabet{\mathcyr}{Russian}
\makeatletter
\let\@math@cyr\mathcyr
\renewcommand{\mathcyr}[1]{\@math@cyr{\cyracc #1}}
\makeatother
 % Le groupe de Shafarewicz

%%%%
\begin{document}

%%%%%%%%%%%%%%%%%%%%%%%%%%%%%%%%%%%%%%%%%%%%%%%%%%%%%%

\theoremstyle{plain}
\newtheorem{theorem}{Theorem}[section]
\newtheorem{introtheorem}{Theorem}

\theoremstyle{plain}
\newtheorem{proposition}[theorem]{Proposition}
\newtheorem{prop}[theorem]{Proposition}

\theoremstyle{plain}
\newtheorem{corollary}[theorem]{Corollary}

\theoremstyle{plain}
\newtheorem{lemma}[theorem]{Lemma}

\theoremstyle{plain}
\newtheorem{expectation}[theorem]{Expectation}

\theoremstyle{plain}
\newtheorem{definition}[theorem]{Definition}

\theoremstyle{remark}
\newtheorem{remind}[theorem]{Reminder}

\theoremstyle{definition}
\newtheorem{condition}[theorem]{Condition}

\theoremstyle{definition}
\newtheorem{construction}[theorem]{Construction}

\theoremstyle{definition}
\newtheorem{example}[theorem]{Example}

\theoremstyle{definition}
\newtheorem{notation}[theorem]{Notation}

\theoremstyle{definition}
\newtheorem{convention}[theorem]{Convention}

\theoremstyle{definition}
\newtheorem{assumption}[theorem]{Assumption}
\newtheorem{isoassumption}[theorem]{Isomorphism Assumption}

\newtheorem{finassumption}[theorem]{Finiteness Assumption}
\newtheorem{indhypothesis}[theorem]{Inductive Hypothesis}

\theoremstyle{remark}
\newtheorem{remark}[theorem]{Remark}

\numberwithin{equation}{subsection}

%\theoremstyle{sctext}
%\newtheorem*{prf}{Proof}

%\newenvironment{prooff}{\begin{proof}}{\end{proof}}
%TODO: replace prooff by proof in files, then get rid of this

%%%DeclareMathOperator%%%

%%%%%%%%%%% Adaptations %%%%%%%%%%%%%%%%

\newcommand{\mathscr}[1]{\mathcal{#1}}
\newcommand{\TODO}{{\color{red} TODO}}
\newcommand{\CHANGE}{{\color{red} CHANGE}}
\newcommand{\annette}[1]{{\color{red}Annette: #1 }}
\newcommand{\simon}[1]{{\color{green}Simon: #1 }}
\newcommand{\giuseppe}[1]{{\color{blue}Giuseppe: #1 }}

\newcommand{\tensor}{\otimes}
\newcommand{\Fil}{\tn{Fil}}
\newcommand{\Gh}{\mathcal{G}}
\newcommand{\Fh}{\shfF}
\newcommand{\Oh}{\mathcal{O}}
\newcommand{\GrVec}{\textnormal{GrVec}}
\newcommand{\BN}{\textnormal{BN}}
\newcommand{\modulo}{\textnormal{mod}}
\newcommand{\CH}{\mathrm{CH}}
\newcommand{\id}{\mathrm{id}}
\newcommand{\isocan}{\xrightarrow{\hspace{1.85pt}\sim \hspace{1.85pt}}}
\newcommand{\isom}{\cong}
\newcommand{\red}{\mathrm{red}}
\newcommand{\Betti}{R_B}
\newcommand{\ladic}{R_\ell}
\newcommand{\Hodge}{R_H}
\newcommand{\MHM}{\mathrm{MHM}}
\newcommand{\Hh}{\mathcal{H}}
\newcommand{\A}{\mathbb{A}}
\newcommand{\gm}{\mathrm{gm}}
\newcommand{\qfh}{\mathrm{qfh}}
\newcommand{\DM}{\tn{\tbf{DM}}}
\newcommand{\DA}{\tn{\tbf{DA}}}
\newcommand{\DMeff}{\tn{\tbf{DM}}^{\eff}}
\newcommand{\DAeff}{\tn{\tbf{DA}}^{\eff}}
\newcommand{\DAgm}{\tn{\tbf{DA}}_{\gm}}
\newcommand{\Bei}{\mathcyr{B}}
\newcommand{\kd}{\tn{kd}}
\newcommand{\Sm}{\tn{\tbf{Sm}}}
\newcommand{\Mor}{\textnormal{Mor}}
\newcommand{\Hom}{\textnormal{Hom}}
\newcommand{\Sh}{\tn{\tbf{Sh}}}
\newcommand{\et}{\tn{\'{e}t}}
\newcommand{\an}{\tn{an}}
\newcommand{\D}{\tn{D}}
\newcommand{\eff}{\tn{eff}}
\newcommand{\DMgm}{\DM_{\tn{gm}}}
\newcommand{\vp}{\varphi}
\newcommand{\Sym}{\textnormal{Sym}}
\newcommand{\OSym}{\textnormal{coSym}}
\newcommand{\Mof}{M_1}
\newcommand{\CGS}{\tn{\tbf{cGrp}}}
\newcommand{\tr}{\textnormal{tr}}
\newcommand{\one}{\mathds{1}}
\newcommand{\Spec}{\textnormal{Spec}}
\newcommand{\Sch}{\tn{\tbf{Sch}}}
\newcommand{\Nor}{\tn{\tbf{Nor}}}
\newcommand{\Lie}{\tn{Lie}}
\newcommand{\Ker}{\tn{Ker}}
\newcommand{\Frob}{\tn{Frob}}
\newcommand{\Ver}{\tn{Ver}}
\newcommand{\N}{\mathbb{N}}
\newcommand{\Z}{\mathbb{Z}}
\newcommand{\Q}{\mathbb{Q}}
\newcommand{\Ql}{\mathbb{Q}_{\ell}}
\newcommand{\R}{\mathbb{R}}
\newcommand{\C}{\mathbb{C}}
\newcommand{\G}{\mathbb{G}}

\newcommand{\xra}{\xrightarrow}
\newcommand{\xla}{\xleftarrow}
\newcommand{\sxra}[1]{\xra{#1}}
\newcommand{\sxla}[1]{\xla{#1}}

\newcommand{\sra}[1]{\stackrel{#1}{\ra}}
\newcommand{\sla}[1]{\stackrel{#1}{\la}}
\newcommand{\slra}[1]{\stackrel{#1}{\lra}}
\newcommand{\sllra}[1]{\stackrel{#1}{\llra}}
\newcommand{\slla}[1]{\stackrel{#1}{\lla}}

\newcommand{\ira}{\stackrel{\simeq}{\ra}}
\newcommand{\ila}{\stackrel{\simeq}{\la}}
\newcommand{\ilra}{\stackrel{\simeq}{\lra}}
\newcommand{\illa}{\stackrel{\simeq}{\lla}}

\newcommand{\ul}[1]{\underline{#1}}
\newcommand{\tn}[1]{\textnormal{#1}}
\newcommand{\tbf}[1]{\textbf{#1}}

\newcommand{\Spt}{\mathop{\mathbf{Spt}}\nolimits}
\newcommand{\ra}{\rightarrow}
\newcommand{\old}{\mathrm{old}}

\setcounter{tocdepth}{1}

\title{On the relative motive of a commutative group scheme}

\author{Giuseppe Ancona}
\address{Universit\"at Duisburg-Essen, Thea-Leymann-Stra\ss e 9 ,
45127 Essen, Germany}

\email{monsieur.beppe@gmail.com}
\author{Annette Huber}
\address{Mathematisches Institut, Albert-Ludwigs-Universit\"at Freiburg, Eckerstra\ss e~1,  
79104 Freiburg im Breisgau, Germany}
\email{annette.huber@math.uni-freiburg.de}

\author{Simon Pepin Lehalleur}
\address{Instit\"ut f\"ur Mathematik, Universit\"at Z\"urich, Switzerland}
\email{simon.pepin@math.uzh.ch}

\begin{abstract}
We prove a canonical K\"unneth decomposition of the relative motive with rational coefficients of a smooth commutative group scheme over a noetherian finite dimensional base.
\end{abstract}

\maketitle
\begin{center}
\today
\end{center}
\tableofcontents

%\annette{}
%\simon{}
%\giuseppe{}
\section*{Introduction}

Let $S$ be a noetherian scheme of finite dimension and $G$ a smooth commutative $S$-group scheme of finite type. The main objective of this paper is to 
establish  
the following canonical K\"unneth decomposition of the motive $M_S(G)$ associated to $G$ in the triangulated category of
 motives over $S$ with rational coefficients (see Theorems \ref{MainThm} and \ref{MainThmwithtransfers}):
\[M_S(G)\isocan \left(\bigoplus_{n\geq 0}^{\kd(G/S)} \Sym^n\Mof(G/S)\right) \otimes M(\pi_0(G/S)) \ .\tag{*}\]
In the above, $\Mof(G/S)$ is the rational $1$-motive of $G$, i.e., the motive induced by the \'etale sheaf represented by $G\tensor\Q$, the motive $M(\pi_0(G/S))$ is a fibrewise Artin motive related to the connected components of fibres of $G$ and $\kd(G/S)$ is a non-negative integer. This decomposition is natural in $G$, it respects the structure of Hopf objects and it is compatible
with pullback for morphisms $f:T\to S$. 

This level of generality makes it possible to treat examples like degenerating families of abelian or semi-abelian varieties, in particular N\'eron models or the universal family over toroidal compactifications of mixed Shimura varieties of Hodge type. Note also that there are no assumptions on regularity or the residual characteristics of $S$. Moreover, we show that this K\"unneth decomposition also holds when $G$ is a commutative group in the category of smooth algebraic spaces over $S$. However, the argument uses rational coefficients in many places. One does not expect such a direct sum decomposition with integral coefficients, even when $S$ is the spectrum of a field.

\

This result has several interesting consequences. First, the motive $\Mof(G/S)$ is geometric and the motives $M_S(G)$ and $\Mof(G/S)$ are of finite Kimura dimension. Second, the Chow groups of $G$ with rational coefficients decomposes into a finite sum of eigenspaces with respect to the multiplication by $n$ (Theorem~\ref{corchow}); this generalizes the result of Beauville for abelian varieties \cite{Beau}. Third, it is now possible to construct the motivic polylog on
$G\smallsetminus\{e_G\}$ for all $S$ and $G$ as above (with $e_G$ the zero section). This is a tremendous generalization
from the cases $G=\G_m$ or $G$ an abelian scheme over a regular base available before.
At the same time, the systematic use of motives over $G$ simplifies the construction
even in the classical case; for details see the upcoming paper \cite{HK14}.

\

The motive $\Mof(G/S)$ is a prime example of a relative homological $1$-motive, i.e., a motive in the triangulated subcategory generated by the relative homology of curves over $S$. In fact, the results of Section \ref{section:real} below suggest that $\Mof(G/S)[-1]$ is in the heart of the conjectural standard motivic $t$-structure on homological $1$-motives. The upcoming thesis \cite{thesis-simon} will make this statement rigorous.

\

Some special cases of the main theorem are already known. The case of abelian schemes over a regular base was treated in the
setting of Chow motives by K\"unnemann in \cite{Ku1}. Building on this, the case where $S$ is the spectrum of a perfect field and general $G$ was treated by the first
two authors together with Enright-Ward in \cite{AEH}. We refer to the introduction of \cite{AEH} for a more complete discussion of earlier work.

\ 

Indeed, the present note is
a follow-up to \cite{AEH}. The idea of the proof is simple: once the morphism $(*)$ is written down, we
can check that it is an isomorphism after pullback to geometric points of
$S$, where we are back in the perfect field case. 

However, a number of difficulties
arise. Firstly, there are technical advantages to working in the setting
of motives without transfers (see Remark~\ref{Harrer}) and it is in this setting that we
define the morphism $(*)$. 
So we will be forced to  compare to the approach of \cite{AEH} which uses Voevodsky's category of motives with transfers.
Secondly, the sheaf represented by $G$ is not cofibrant for the
$\A^1$-model structure, hence it is not obvious how to compute
the left derived pullback $f^*\Mof(G/S)$. This is overcome by an explicit
resolution defined by rational homotopy theory. Finally, the crucial reduction that allows to check that a morphism is an isomorphism after restriction to geometric fibres (Lemma~\ref{vanishing}) is only available in the stable case. 

All these technical difficulties give rise to results that have their own interest. In particular we show that the sheaf represented by $G\otimes \Q$ admits transfers (Theorem \ref{G with transfers}).

\ 

We will (mostly) work with stable categories of motives with rational coefficients. There are several version of such categories available in the literature, namely: \begin{enumerate}
\item the original definition of Voevodsky of $\DM(S,\Q)$ based on finite correspondences and sheaves with transfers,
\item the \'etale motives without transfers $\DA^\et(S,\Q)$, introduced by Morel and studied in \cite{Ayoub_these_1}, \cite{Ayoub_these_2}, \cite{Ayoubet},
\item h-motives $\DM_h(S,\Q)$, originally introduced by Voevodsky and studied in \cite{CD, CDet},
\item Beilinson motives $\DM_{\Bei}(S)$ introduced in \cite{CD}.
\end{enumerate} 
All these categories are related by functors, and moreover are equivalent for good base schemes (e.g. $S$ regular) by \cite[Theorems 16.1.2, 16.1.4, 16.2.18]{CD}.

We have several reasons here to work with $\DA^\et(S,\Q)$; for instance, the morphism $(*)$ is easiest to write in this context (again, see Remark \ref{Harrer}). The comparison functors mentioned above allow us to transpose the main theorem to the other categories (see Theorem \ref{MainThmwithtransfers} and Remark \ref{other categories}).
\subsection*{Organization of the paper} Section \ref{sect1} deals with generalities on commutative group schemes. Section~\ref{sect2} introduces the motive $\Mof(G/S)$ and gives its basic properties. Section~\ref{sect3} states the main theorem and its consequences. The proof is then given in Section \ref{sect:proof}. We study the Betti, $\ell$-adic and Hodge realizations of the K\"unneth components in Section \ref{section:real}. In Section~\ref{sect:alggroups} we present the straightforward generalisation of our results to the setting of smooth commutative algebraic group spaces. 

The reader who is not familiar with the triangulated categories of mixed motives over a base \cite{Ayoubet,CD} may find Appendix \ref{app_relative} useful. The  three other appendices deal with technical points of the proof of the main theorem explained above. In Appendix \ref{app_transfers} we develop the tools needed to translate the main result of \cite{AEH} to motives without transfers.  In Appendix~\ref{app_qfh} we establish $\qfh$-descent for the presheaf given by a commutative group. Appendix \ref{app_resolution} deals with the construction of a functorial cofibrant resolution for the sheaf $G\tensor\Q$. 
\subsection*{Ackowledgments} It is a pleasure to thank Joseph Ayoub, Fr\'ed\'eric D\'eglise, Brad Drew,  Michael Rapoport and Fabio Tonini for discussions. The first two authors are particularly thankful to Fr\'ed\'eric D\'eglise for pointing out the serious difficulty with pullback of $\Mof(G)$.  We are also very grateful to 
Joseph Ayoub for pointing out a mistake in an earlier version of Appendix~\ref{app_transfers}.
This work is part of the third author's PhD thesis at the University of Z\"urich under the supervision of Joseph Ayoub. We learned a lot about algebraic group spaces from reading up
a 2009 discussion on MathOverflow. We thank the participants.

\section*{Conventions}
Throughout let $S$ be a noetherian scheme of finite dimension. Let $\Sm/S$ be the category of smooth $S$-schemes of finite type and $\Sch/S$ the category of $S$-schemes of finite type. We also write $\Sm$ and $\Sch$ when there is no ambiguity on the base. By sheaf we mean an \'etale sheaf on $\Sm/S$, unless specified otherwise.
By $G$, we denote either a group scheme or an algebraic group space that is always assumed to be commutative, smooth of finite type over $S$. In this situation, we often write $G/S$ for concision. For any morphism $f: T \longrightarrow S$ we will write \[G_T/T=G\times_S T / T\] for the base change of $G/S$ along $f$. We write $\CGS_S$ for the category of smooth commutative group schemes of finite type over $S$.

Throughout, we write $\ul{G/S}$ or more briefly $\ul{G}$ for the presheaf of abelian groups on $\Sch/S$ or $\Sm/S$ defined by  $G$. We write $\ul{G/S}_\Q$ (or $\ul{G}_\Q$) for
the presheaf tensor product $\ul{G/S}\tensor_\Z\Q$. 

We use the cohomological indexing convention for complexes, except in Appendix \ref{app_resolution} where we follow the homological convention of \cite{Breen} for ease of reference.

As discussed in the introduction, we need to consider various triangulated categories of mixed motives with rational coefficients. See the Appendices \ref{app_relative} and \ref{app_transfers} for the set-up we use. In particular, we write $\DA(S)$ for $\DA^\et(S,\Q)$.

At several points, we have parallel statements in effective and stable categories of motives. They are indicated by an $(\eff)$ in the notation.

\section{Group schemes}\label{sect1}

Let $S$ be a noetherian scheme of finite dimension and $G/S$ a smooth commutative group scheme of finite type.

Since $G/S$ is smooth, the union of the neutral components $G_s^0$ for $s\in S$ is open in $G$ by \cite[Expos\'e VI$_B$ Th\'eor\`eme 3.10]{SGA3}. We write $G^0$ for the corresponding open subscheme of $G$ and $\pi_0(G/S):=G/G^0$ for the quotient \'etale sheaf on $\Sch/S$. We also write $\pi_0(G/S)$ for its restriction to $\Sm/S$. By definition of $G^0$, the formation of $G^0$ and $\pi_0(G/S)$ for $G/S$ smooth commutes with base change.

The following statement uses the notion of an algebraic space. For basics on algebraic spaces, see Section \ref{sect:alggroups} below.

\begin{lemma}
\label{lemma:connected_comp}
The scheme $G^0$ is a smooth $S$-group scheme of finite type and $\pi_0(G/S)$ is an \'etale algebraic group space. If $G^0$ is closed in $G$ (for instance if $S$ is the spectrum of a field) then $\pi_0(G/S)$ is a scheme. Moreover, there is a canonical
short exact sequence of \'etale sheaves
\[0 \longrightarrow G^0 \longrightarrow G \stackrel{p_G}{\longrightarrow} \pi_0(G/S) \longrightarrow 0 \ .\]
\end{lemma}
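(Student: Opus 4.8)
The plan is to obtain the structural statements about $G^0$ and $\pi_0(G/S)$ from the theory of quotients by free group actions, and to treat the short exact sequence last. First I would recall from \cite[Expos\'e VI$_B$ Th\'eor\`eme 3.10]{SGA3} that $G^0$ is in fact an open subgroup scheme of $G$ with connected fibres, not merely an open subscheme; since it is open in the smooth finite-type $S$-scheme $G$, it is itself smooth of finite type over $S$, which settles the first assertion. As $G$ is commutative, $G^0$ is automatically normal and its translation action on $G$ is free, so $\pi_0(G/S) = G/G^0$ is the quotient of the scheme $G$ by the free action of the flat, finitely presented group scheme $G^0$.

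Next I would invoke Artin's theorem on quotients by flat equivalence relations, applied to $R = G \times_S G^0 \rightrightarrows G$ (first projection and translation), whose source and structural maps are smooth, hence flat and finitely presented: the quotient sheaf $\pi_0(G/S)$ is then representable by an algebraic space, and $p_G: G \to \pi_0(G/S)$ is a $G^0$-torsor, in particular smooth and surjective of the same relative dimension $d$ as $G^0/S$. To see that $\pi_0(G/S) \to S$ is étale I would use descent of smoothness along the smooth surjection $p_G$: since $p_G$ is smooth surjective of relative dimension $d$ and $G \to S$ is smooth of relative dimension $d$, the morphism $\pi_0(G/S) \to S$ is smooth of relative dimension $0$, i.e. étale. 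The group structure on $\pi_0(G/S)$ descends from $G$ because $G^0$ is normal, so $\pi_0(G/S)$ is an étale algebraic group space.

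For the scheme-ness statement I would show that $G^0$ closed in $G$ forces $\pi_0(G/S) \to S$ to be separated. Writing $\delta: G \times_S G \to G$, $(g,g') \mapsto g^{-1}g'$, the equivalence relation is $R = \delta^{-1}(G^0)$, which is closed in $G \times_S G$ as soon as $G^0$ is closed in $G$; since $R \to G \times_S G$ is the base change of the diagonal $\pi_0(G/S) \to \pi_0(G/S) \times_S \pi_0(G/S)$ along the fppf cover $p_G \times p_G$, and closed immersions descend along fppf covers, the diagonal is a closed immersion and $\pi_0(G/S) \to S$ is separated. A separated, locally quasi-finite (here étale) morphism from an algebraic space to a scheme is representable by schemes, so $\pi_0(G/S)$ is a scheme. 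Finally, the short exact sequence is built into the construction: $G^0 \hookrightarrow G$ is a monomorphism of étale sheaves whose cokernel sheaf is $\pi_0(G/S)$ by definition, and $p_G$ is an epimorphism of étale sheaves since it is an étale-locally split torsor.

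The main obstacle I anticipate is the passage from algebraic space to scheme in the closed case. One must correctly match closedness of $G^0$ with separatedness of the quotient and then appeal to the nontrivial fact that a separated, locally quasi-finite algebraic space over a scheme is a scheme. The bare representability of $G/G^0$ as an algebraic space, though standard, is the other genuinely non-schematic input, and is exactly why the statement must be phrased using algebraic spaces in general.
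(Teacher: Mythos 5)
Your proof is correct, and its overall skeleton matches the paper's: $G^0$ is smooth of finite type because it is open in the smooth finite-type $S$-scheme $G$; the quotient $G/G^0$ is an algebraic space by the theorem on quotients of smooth equivalence relations (the paper cites Laumon--Moret-Bailly, Corollaire 10.4, you invoke Artin -- the same tool, applied to the same presentation $\delta^{-1}(G^0)\rightrightarrows G$); and scheme-ness in the closed case follows in both from ``separated $+$ quasi-finite \'etale $\Rightarrow$ scheme'' (the paper cites Knutson II, Theorem 6.15). Where you genuinely diverge is the proof that $\pi_0(G/S)\to S$ is \'etale. The paper argues formally: $\pi_0(G/S)$ is locally of finite presentation (since $G$ and $G^0$ are) and \emph{formally} \'etale, because for an infinitesimal thickening $T^0\hookrightarrow T$ the map $(G/G^0)(T)\to(G/G^0)(T^0)$ is surjective by smoothness of $G$ and injective by openness of $G^0$ in $G$. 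You instead use the $G^0$-torsor structure of $p_G$, descent of smoothness along the fppf surjection $p_G$ (EGA IV 17.7.7, in its algebraic-space form), and a relative-dimension count resting on equidimensionality of group fibres. Both are valid; the paper's route is lighter on input -- it never needs to know that $p_G$ is flat or smooth, nor any dimension theory -- while yours yields the extra geometric fact that $p_G$ is a smooth surjective $G^0$-torsor, which is useful in its own right. You also fill in a detail the paper only asserts: separatedness of $\pi_0(G/S)$ when $G^0$ is closed, via fppf descent of closed immersions applied to $R=\delta^{-1}(G^0)$ identified with the pullback of the diagonal of $\pi_0(G/S)$ along $p_G\times p_G$; this is a correct and worthwhile elaboration. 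One small caveat: if you want to quote Knutson's statement as the paper does (which assumes quasi-finite of finite presentation rather than merely locally quasi-finite), you should note that $\pi_0(G/S)\to S$ is quasi-compact, which is immediate since $G\to S$ is quasi-compact and $p_G$ is surjective; the locally-quasi-finite version you implicitly appeal to needs no such hypothesis.
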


\begin{proof}
Since $G/S$ is smooth of finite type and $S$ is noetherian, $G^0$ is also smooth and of finite type. 

As the quotient of the smooth equivalence relation $G^0\times_G(G\times_SG)$, the sheaf $G/G^0$ is an $S$-algebraic group space (see e.g \cite[Corollaire 10.4]{LMB}).
 
The algebraic group space $\pi_0(G/S)$ is locally of finite presentation (because both $G$ and $G^0$ are locally of finite presentation) and formally \'etale (let $T^0\rightarrow T$ be an infinitesimal thickening; then $(G/G^0)(T)\rightarrow (G/G^0)(T^0)$ is surjective because $G$ is smooth, and injective because $G^0$ is open in $G$). We conclude that $\pi_0(G/S)$ is an \'etale algebraic space. 

 If $G^0$ is open and closed in $G$ then $\pi_0(G/S)$ is separated. A quasi-finite separated algebraic space of finite presentation is a scheme by \cite[II, Theorem 6.15]{Knutson}.
\end{proof}

The function $s\mapsto |\pi_0(G_{\bar{s}}/\bar{s})|$ (where $\bar{s}$ is any geometric point above $s$) is locally constructible on $S$ by \cite[Corollaire 9.7.9]{EGAIV_3}. In particular, it is bounded since $S$ is quasi-compact. This justifies the following definition.

\begin{definition}\label{orderpi}
The {\em order} of $\pi_0(G/S)$, denoted $o(\pi_0(G/S))$, is defined as the least common multiple of the order of all the elements of the groups $\pi_0(G_{\bar{s}}/\bar{s})$ (with $\bar{s}$ geometric point of $S$). 
\end{definition}

\begin{definition}\label{globalKimura}
For any point $s \in S$, we write $G_s$ for the fibre, $g_s$ for its abelian rank and $r_s$ for its torus rank.
 The {\em Kimura dimension} of $G/S$ is:
\[\kd(G/S):=\max\{2g_s+r_s|s\in S\}\]
\end{definition}
This terminology with be justified by Theorem \ref{MainThm}.

\begin{lemma}\label{lemma:kd_semi_cont}
The value $\kd(G/S)$ is the maximum of $2g_s+r_s$ for $s$ varying in generic points of $S$.
\end{lemma}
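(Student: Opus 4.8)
The plan is to reduce the assertion to a single semicontinuity property under specialization and then to extract that property from the $\ell$-torsion of $G$. Since the fibral invariants $g_s$ and $r_s$ depend only on the identity component $(G^0)_s=G^0_s$, I may replace $G$ by $G^0$, which is again a smooth commutative group scheme of finite type by Lemma~\ref{lemma:connected_comp}, and assume that all fibres are connected. The key claim is then that $2g_s+r_s\le 2g_t+r_t$ for every specialization $t\rightsquigarrow s$ in $S$ (that is, $s\in\overline{\{t\}}$). Granting this claim, the lemma is immediate: any $s\in S$ is a specialization of the generic point $\eta$ of an irreducible component of $S$ through $s$, so $2g_s+r_s\le 2g_\eta+r_\eta$, and hence the maximum defining $\kd(G/S)$ in Definition~\ref{globalKimura} is already attained among the finitely many generic points of $S$.

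To prove the claim I would interpret $2g_s+r_s$ via torsion. Fix a specialization $t\rightsquigarrow s$ and a prime $\ell$ different from the residue characteristic of $s$; since $\overline{\{t\}}$ is integral with generic point $t$, the residue characteristic of $t$ is either $0$ or equal to that of $s$, so $\ell$ is invertible at $t$ as well. Over an algebraically closed field of characteristic $\neq\ell$, Chevalley's structure theorem exhibits a connected fibre as an extension of an abelian variety of dimension $g$ by a connected linear group, the latter being a product of a torus of rank $r$ and a unipotent group. Multiplication by $\ell$ is then an isogeny which is an isomorphism on the unipotent part and whose kernel has order $\ell^{2g}$, resp.\ $\ell^{r}$, on the abelian, resp.\ toric, part. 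A short diagram chase gives $|G^0_{\bar u}[\ell]|=\ell^{\,2g_u+r_u}$ for every point $u$ at which $\ell$ is invertible, so the claim becomes the inequality $|G^0_{\bar s}[\ell]|\le|G^0_{\bar t}[\ell]|$.

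Finally I would study the torsion subscheme $Z:=\ker\bigl([\ell]\colon G^0\to G^0\bigr)$ over the open subscheme $U:=S[1/\ell]$, which contains both $s$ and $t$. As $\ell$ is invertible on $U$ and $G^0/U$ is smooth, the fibrewise criterion for \'etaleness shows that $[\ell]$ is \'etale; hence $Z$, being the pullback of the identity section along $[\ell]$, is \'etale, quasi-finite and separated over $U$, with geometric fibres of cardinality $\ell^{\,2g_u+r_u}$. The claim then amounts to the \emph{lower} semicontinuity of $u\mapsto|Z_{\bar u}|$: the sets $\{u:|Z_{\bar u}|\ge c\}$ are open, and an open set containing $s$ automatically contains its generization $t$. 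I expect this semicontinuity to be the main obstacle, as it encodes the geometric fact that under specialization $\ell$-torsion points can only disappear, never appear. I would establish it either by citing the semicontinuity of the number of geometric points of the fibres of a separated quasi-finite morphism, or directly from the valuative criterion of separatedness: every geometric $\ell$-torsion point of the special fibre lifts, along a suitable trait and using the \'etaleness of $Z$, to the generic fibre, and distinct special points lift to distinct generic points by separatedness. Combined with the torsion computation of the previous paragraph, this yields $2g_s+r_s\le 2g_t+r_t$ and completes the proof.
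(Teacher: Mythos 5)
Your argument is correct and is essentially the paper's own proof: reduce to $G^0$, read off $2g_u+r_u$ from the $\ell$-torsion for $\ell$ invertible in the relevant residue fields, note that $\ker[\ell]$ is \'etale there, and deduce that torsion can only drop under specialization by an \'etale lifting argument --- the paper does exactly this by strictly henselizing at $t$ and applying Hensel's lemma (citing BLR, N\'eron models, \S 7.3 Lemma 2(b) for \'etaleness), and your trait/semicontinuity formulations are the same mechanism in different packaging. Two points in your write-up deserve tightening, though neither is fatal. First, separatedness of $Z=G^0[\ell]$ does \emph{not} follow from its being ``the pullback of the identity section'': that section is a priori only an immersion, and it is a closed immersion precisely when $G^0$ is separated over the base; this holds here because $G^0$ has connected fibres (SGA3, Expos\'e VI$_B$, Corollaire 5.5, which the paper invokes elsewhere), and it is genuinely needed --- there exist non-separated quasi-finite \'etale group schemes over a DVR with trivial generic fibre and nontrivial special fibre, for which torsion \emph{appears} under specialization and your semicontinuity fails. (The paper's proof relies on the same fact tacitly: without closed equalizers, distinct torsion sections over the henselization could collide at a generization.) Second, your fallback principle ``semicontinuity of the number of geometric points of the fibres of a separated quasi-finite morphism'' is false as stated --- the normalization of a nodal curve is finite and separated but has a jump at the node --- so \'etaleness (or at least flatness) must be kept in the hypotheses; since your $Z$ is \'etale, the application is fine once this is said.
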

\begin{proof}
After replacing $G$ by $G^0$, we can suppose that $G$ is fibrewise connected. Let us fix $t \in S$. After base changing to the strict henselianisation of the local ring at $t$, we can suppose $S$ to be strict henselian. It is enough to show that, under this hypothesis, $2g_s+r_s\geq 2g_{t}+r_{t} $ for all $s\in S$.

Let us fix a prime $\ell$ which is coprime to the residual characteristic of $S$ and, for all natural $n$, consider the multiplication map $[\ell^n]:G \rightarrow G$. The integer $2g_s+r_s$ is the rank of the $\ell$-adic Tate module of $G_s$. By \cite[\S 7.3 Lemma 2(b)]{Neron}, the group scheme $\ker [\ell^n]$ is \'etale over $S$. So, by Hensel's Lemma, any section of this \'etale morphism at $t$ extends to a section over $S$. In particular the rank of the Tate module of $G_s$ has its minimum at $s=t$. 
\end{proof}

\section{The $1$-motive $\Mof(G)$}\label{sect2}

\begin{definition}\label{def:sheaf_G}
 Let $\ul{G/S}$ be the \'etale sheaf of abelian groups on $\Sm/S$ defined by $G$,
i.e.,
\[ \ul{G/S}(Y)=\Mor_{\Sm/S}(Y,G)\]
for $Y\in \Sm/S$. 

Let \[a_{G/S}: \Z\Mor_S(\cdot, G) \rightarrow \ul{G/S}\] 
be the morphism of presheaves of abelian groups on $\Sm/S$ induced by the addition map 

We may omit $S$ from these notations if the base scheme is clear.
\end{definition}

\begin{remark}
By \'etale descent, $\ul{G/S}$ is a sheaf; $\ul{G/S}_\Q$ is then also a sheaf by a quasi-compactness argument \cite[Lemma 2.1.2]{AEH}. However, the presheaves $\Z\Mor_S(\cdot, G)$ and $\Q\Mor_S(\cdot, G)$ are not sheaves.
\end{remark}

\begin{definition} 
\label{def:main_defM1}
\begin{enumerate}
\item Define
 \[\Mof^\eff(G/S)\in \DAeff(S)\]
 to be the motive induced by the sheaf $\ul{G/S}_\Q$ and
\[\Mof(G/S)=L\Sigma^\infty\Mof^\eff(G/S)\in \DA(S) \ .\]
 \item Define
 \[\alpha^\eff_{G/S}: M^\eff_S(G) \longrightarrow \Mof^\eff(G/S)\] 
 to be the morphism in $\DA(S)$ induced by the sheafification of ${a_{G/S}\otimes\Q}$ and
\[\alpha_{G/S}=L\Sigma^\infty\alpha^\eff_{G/S}: M_S(G)\longrightarrow \Mof(G/S)\]
\end{enumerate}
\end{definition}

\begin{remark}\label{Harrer}
The reader should compare with \cite[Definition 2.1.5]{AEH} which is an effective analogue with transfers over a perfect field. A comparison between the two morphisms will be made in Section \ref{subsect:field}. The definition in op. cit. cannot be generalized over a general $S$, because it is not yet known that finite correspondences from $X$ to $Y$ are related to morphisms from $X$ to a symmetric power of $Y$. This point is studied in the upcoming work of Harrer \cite{thesis-Harrer}.
\end{remark} 

\begin{remark}
The assignment
\[G/S \mapsto \Mof(G/S)\]
naturally extends to a functor \[\Mof: \CGS_S \longrightarrow \DA(S) \ .\]
\end{remark}

\begin{lemma}
\label{lemma:connected_reduction}
\begin{enumerate}
\item \label{M1linear} The functor $\Mof$ is $\Z$-linear. Moreover, it sends short exact sequences in $\CGS_S$ to exact triangles in $\DA(S)$.
\item The inclusion $G^0\rightarrow G$ induces an isomorphism $\Mof(G)\simeq\Mof(G^0)$.

\item \label{alphalinear} The morphism $\alpha_{G/S}$ is natural in $G \in \CGS_S.$
\end{enumerate}
\end{lemma}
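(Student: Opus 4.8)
The plan is to build everything from the single observation that $\Mof$ is the composite of four functors, each of which is additive: the functor $G \mapsto \ul{G/S}$ from $\CGS_S$ to the abelian category of \'etale sheaves of abelian groups on $\Sm/S$; the flat base change $(-)\tensor_\Z\Q$; the functor sending an \'etale sheaf to the motive it induces in $\DAeff(S)$; and the triangulated suspension $L\Sigma^\infty\colon\DAeff(S)\to\DA(S)$.

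For (\ref{M1linear}), $\Z$-linearity is immediate: a homomorphism $\phi\colon G \to H$ induces the group homomorphism $\ul\phi\colon \ul G \to \ul H$, and $\phi \mapsto \ul\phi$ is additive because the group law on $\ul{G}(Y)$ is induced by that of $G$; the three remaining functors are additive as well. For the statement on exact sequences, I would read a short exact sequence $0 \to G' \to G \to G'' \to 0$ in $\CGS_S$ as one which is exact as a sequence of \'etale sheaves, the surjectivity meaning that $G\to G''$ is an epimorphism of \'etale sheaves (which holds since $G\to G''$ is smooth surjective and so admits \'etale-local sections). As $\Q$ is flat over $\Z$, applying $(-)\tensor_\Z\Q$ preserves exactness, giving a short exact sequence $0 \to \ul{G'}_\Q \to \ul{G}_\Q \to \ul{G''}_\Q \to 0$ of sheaves; the functor sending a sheaf to its motive takes this to an exact triangle in $\DAeff(S)$, and $L\Sigma^\infty$, being triangulated, produces the desired triangle in $\DA(S)$.

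For (2), I would apply the mechanism of (\ref{M1linear}) to the canonical short exact sequence $0 \to G^0 \to G \to \pi_0(G/S) \to 0$ of \'etale sheaves from Lemma~\ref{lemma:connected_comp}. This yields an exact triangle whose third term is the motive induced by $\ul{\pi_0(G/S)}_\Q$, so it suffices to show $\ul{\pi_0(G/S)}_\Q = 0$. Since the order function on $S$ is bounded, $o := o(\pi_0(G/S))$ is a well-defined positive integer (Definition~\ref{orderpi}), and multiplication by $o$ kills every geometric fibre of the \'etale space $\pi_0(G/S)$; hence $[o]$ and the zero homomorphism coincide, as the equaliser of the two is open (the diagonal of the \'etale $\pi_0(G/S)$ being open) and contains every point. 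Thus every section of $\ul{\pi_0(G/S)}$ is $o$-torsion, so $\ul{\pi_0(G/S)}\tensor_\Z\Q$ vanishes already as a presheaf, and the triangle forces $\Mof(G^0)\isocan\Mof(G)$. This is the one place where rational coefficients are genuinely essential.

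For (\ref{alphalinear}), I would verify naturality first at the level of presheaves of abelian groups on $\Sm/S$. Given $\phi\colon G \to H$, the square built from $a_{G/S}$, $a_{H/S}$, the pushforward $\Z\Mor_S(\cdot,\phi)$ and $\ul\phi$ commutes: on a basis element $[f]$ both composites return $\phi\circ f$, and since $\ul\phi$ is a group homomorphism the two composites then agree on arbitrary $\Z$-linear combinations. Applying $(-)\tensor\Q$, sheafifying, passing to the derived category and applying $L\Sigma^\infty$ — all additive and functorial operations — preserves this commuting square, giving naturality of $\alpha_{G/S}$. The only non-formal inputs in the whole argument are the right-exactness of the sheaf sequence in (\ref{M1linear}) (via smoothness) and the vanishing $\ul{\pi_0(G/S)}_\Q = 0$ in (2); I expect the latter, together with fixing the correct meaning of ``short exact sequence in $\CGS_S$'', to be the main points requiring care, the rest being formal consequences of additivity and triangulatedness.
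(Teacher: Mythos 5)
Your proof is correct and follows essentially the same route as the paper's: $\Z$-linearity and exactness of $G \mapsto \ul{G/S}_\Q$ as a functor to \'etale sheaves of $\Q$-vector spaces, vanishing of $\ul{\pi_0(G/S)}_\Q$ because $\pi_0(G/S)$ is torsion, and naturality of $a_{G/S}\otimes\Q$ at the presheaf level followed by sheafification and passage to $\DA(S)$. The only (cosmetic) difference is in part (2): the paper simply asserts that $G/G^0$ is a torsion sheaf and deduces the sheaf isomorphism $\ul{G^0/S}_\Q \simeq \ul{G/S}_\Q$ directly, whereas you prove the torsion claim carefully (via the open equaliser of $[o]$ and $0$) and route the conclusion through an exact triangle.
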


\begin{proof}
\begin{enumerate}
\item The functor from $\CGS_S$ to the category of \'etale sheaves of $\Q$-vector spaces on $\Sm/S$ which sends $G$ to $\ul{G/S}_\Q$ is $\Z$-linear and exact. The statement then follows from the construction of $\DA(S)$.

\item As the quotient $G/G^0$ is a torsion sheaf the map $G^0\ra G$ induces an isomorphism of sheaves ${\ul{G^0/S}_\Q\stackrel{\sim}{\rightarrow}\ul{G/S}_\Q}$ . 
\item The morphism $a_{G/S}\otimes \Q$ is natural in $G \in \CGS_S,$ and so is its sheafification.
\end{enumerate}
\end{proof}

\begin{proposition}\label{pullback_group}
Let $f:T\to S$ be a morphism and $G\to S$ a smooth commutative group scheme. Then with the notation of Definition \ref{def:main_defM1} there are canonical isomorphisms
\begin{gather*} 
f^*\Mof^{(\eff)}(G/S)\simeq\Mof^{(\eff)}(G_T/T)\\
\end{gather*}
and, modulo these isomorphisms, we have equalities of morphisms
\begin{gather*}
f^*(\alpha^{(\eff)}_{G/S})=\alpha^{(\eff)}_{G_T/T} \ .
\end{gather*}
\end{proposition}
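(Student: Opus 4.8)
The plan is to reduce to the effective statement and then compute the derived pullback via an explicit cofibrant resolution. First I would observe that the infinite suspension $L\Sigma^\infty\colon \DAeff(S)\to\DA(S)$ commutes with $f^*$ as part of the premotivic six-functor formalism, and likewise intertwines $\alpha^\eff$ with $\alpha$; hence it suffices to produce canonical isomorphisms $f^*\Mof^\eff(G/S)\isom\Mof^\eff(G_T/T)$ compatible with the $\alpha^\eff$. At the level of representables the pullback is easy: the left Quillen functor underlying $f^*$ sends the free presheaf $\Q\Mor_S(\cdot,X)$ to $\Q\Mor_T(\cdot,X_T)$ for $X\in\Sm/S$, inducing $f^*M^\eff_S(X)\isom M^\eff_T(X_T)$, because base change of schemes satisfies $(X\times_S Y)_T=X_T\times_T Y_T$. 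The difficulty, flagged in the introduction, is that $\Mof^\eff(G/S)$ is the motive of the sheaf $\ul{G/S}_\Q$, which is \emph{not} cofibrant, so $f^*\Mof^\eff(G/S)$ cannot be computed by naively pulling back $\ul{G/S}_\Q$.

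To get around this I would use a functorial cofibrant resolution $P^\bullet(G/S)\xra{\sim}\ul{G/S}_\Q$, as constructed in Appendix \ref{app_resolution}, whose terms are sums of free presheaves $\Q\Mor_S(\cdot,G^{\times n})$ on the smooth $S$-schemes $G^{\times n}$, built functorially from $G$ through its group law, and natural in $G\in\CGS_S$. Each term is cofibrant in the $\A^1$-étale-local model structure (a left Bousfield localization, which preserves the projective cofibrations), and as a left Quillen functor $f^*$ preserves cofibrant objects; hence $f^*\Mof^\eff(G/S)$ is represented by $f^*P^\bullet(G/S)$. The key computation is termwise:
\[ f^*\,\Q\Mor_S(\cdot,G^{\times n})\isom\Q\Mor_T\bigl(\cdot,(G^{\times n})_T\bigr)=\Q\Mor_T\bigl(\cdot,(G_T)^{\times n}\bigr), \]
using that the formation of $G^{\times n}$ commutes with base change. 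Because the resolution is assembled functorially out of these representables (products, the addition map, the Breen-type differentials), applying $f^*$ identifies $f^*P^\bullet(G/S)$ with the analogous resolution $P^\bullet(G_T/T)$ for the group scheme $G_T/T$. By Appendix \ref{app_resolution} the latter is a cofibrant resolution of $\ul{G_T/T}_\Q$, so its class in $\DAeff(T)$ is $\Mof^\eff(G_T/T)$; this yields the canonical isomorphism. Note that I do \emph{not} claim $f^*P^\bullet(G/S)\to f^*\ul{G/S}_\Q$ is a weak equivalence (it need not be, the target being non-cofibrant): the route passes instead through the strict identification $f^*P^\bullet(G/S)\isom P^\bullet(G_T/T)$ and the resolution property for $G_T/T$.

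For the compatibility with $\alpha^\eff$, I would note that $\alpha^\eff_{G/S}$ is induced by the sheafified addition map $a_{G/S}\otimes\Q\colon \Q\Mor_S(\cdot,G)\to\ul{G/S}_\Q$, which appears in the resolution as (a component of) the augmentation in degree one. Since $a_{G/S}$ is defined purely by the group law and representable data, it pulls back under $f^*$ to $a_{G_T/T}$; tracing this through the identification $f^*P^\bullet(G/S)\isom P^\bullet(G_T/T)$ above gives the equality $f^*(\alpha^\eff_{G/S})=\alpha^\eff_{G_T/T}$ modulo the canonical isomorphisms. I expect the main obstacle to be exactly the non-cofibrancy issue: the whole argument hinges on a resolution that is simultaneously (i) by objects on which $f^*$ is computed termwise as base change of representables, and (ii) strictly functorial in $G$, so that it commutes with pullback on the nose; verifying that the construction of Appendix \ref{app_resolution} has both properties is the crux. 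The naturality of $\alpha^\eff$ and the passage to the stable category are then comparatively routine bookkeeping.
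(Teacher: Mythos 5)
Your proposal is correct and follows essentially the same route as the paper's proof: reduce to the effective case via the compatibility of $L\Sigma^\infty$ with $f^*$, then compute the derived pullback using the functorial Eilenberg--MacLane/Breen resolution of Theorem \ref{theorem:resolution}, whose terms are sums of representables (hence cofibrant and with termwise pullback given by base change), whose formation commutes with pullback, and which carries a lift of the addition map giving the compatibility of $\alpha^{\eff}_{G/S}$ with base change. The paper's argument is exactly this, phrased via the chain of isomorphisms $Lf^*\Mof^\eff(G/S)\xleftarrow{\sim} f^*(A_\Q(G))\simeq A_\Q(G_T)\xrightarrow{\sim}\Mof^\eff(G_T/T)$.
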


\begin{proof}
Since pullbacks commute with suspension (Lemma \ref{lemma_motives}(\ref{pullback_suspension})) it is enough to treat the effective case. In this proof, we write for clarity $f^*$ for the underived pullback on complexes of sheaves and $Lf^*:\DAeff(S)\rightarrow \DAeff(T)$ for the triangulated pullback functor.

By the universal property of fibre products, we have
\[ f^*\ul{G}=\ul{G\times_ST}\ .\]
By applying Theorem \ref{theorem:resolution} to the sheaf $\ul{G}$ and switching to cohomological indexing we obtain a complex $A(\ul{G})\in\mathbf{Cpl}^{\leq 0}(\Sh_\et(\Sm/S,\Z)$ 
together with a map
\[
r:A(\ul{G})\rightarrow \ul{G}
\]
with the following properties.
\begin{enumerate}
\item \label{eform} For all $i\in \N$, the sheaf $A(\ul{G})^{-i}$ is of the form $\bigoplus_{j=0}^{d(i)}\Z(\Gh^{a(i,j)})$ for some $d(i),a(i,j)\in\N$. In particular $A_\Q(G):=A(\ul{G})\tensor\Q$ is a bounded above chain complex of sums of representable sheaves of $\Q$-vector spaces; hence it is cofibrant in the projective model structure that we use to define $\DAeff(S)$.
\item \label{lift} There is a morphism $\tilde{a}_{G/S}:\Z(G)\rightarrow A(\ul{G})$ lifting $a_{G/S}$.
\item \label{Qres} The map $r_\Q:=r\tensor \Q$ is a quasi-isomorphism.
\end{enumerate}

There is a complex $A(\ul{G_T})\in\mathbf{Cpl}^{\leq 0}(\Sh_\et(\Sm/T,\Z))$ and a map $r_T:A(\ul{G_T})\rightarrow \ul{G_T}$ with the same properties, and all those objects are compatible with pullbacks. Putting all of this together, we obtain canonical isomorphisms of objects

\begin{equation*}
Lf^*M_1(G/S)\stackrel[\sim]{Lf^*(r_\Q)}{\xleftarrow{\hspace*{1cm}}} f^*(A_\Q(G))\simeq A_\Q(G_T)\stackrel[\sim]{r_\Q}{\xrightarrow{\hspace*{1cm}}} M_1(G_T/T) \\
\end{equation*}

and, modulo these isomorphisms, equalities of morphisms:

\begin{equation*}
Lf^*\alpha^\eff_{G/S}=f^*(\tilde{a}_{G/S}\otimes\Q)=\tilde{a}_{G_T/T}\otimes\Q=\alpha^\eff_{G_T/T}
\end{equation*}

This finishes the proof.
\end{proof}

The rest of the section is devoted to study the $1$-motive \textit{with transfers} of $G$ and to compare it to $\Mof(G)$. It will be used only in section \ref{sect:proof} and can be skipped on a first reading.

\begin{theorem}\label{G with transfers}
Let $G$ be a smooth commutative group scheme over an excellent (noetherian and finite dimensional) scheme $S$. The \'etale sheaf $\ul{G/S}_\Q$  on $\Sm/S$ represented by $G$ has a unique structure of \'etale sheaf with transfers, which we denote $\ul{G/S}_\Q^\tr$.
Moreover, there is a unique map
\[ a_{G/S}^\tr:\Q(G)^\tr\to \ul{G/S}_\Q^\tr\]
of sheaves of transfers extending $a_{G/S}$.
\end{theorem}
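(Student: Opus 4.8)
The plan is to treat the construction of the transfer structure on $\ul{G/S}_\Q$ as the essential point, and to deduce the statement about $a_{G/S}^\tr$ formally. Recall that $\Q(G)^\tr$ is the free étale sheaf with transfers on the representable sheaf $\Q\Mor_S(\cdot,G)$; writing $u$ for the functor forgetting transfers, one has for every étale sheaf with transfers $F$ a natural isomorphism $\Hom_{\tr}(\Q(G)^\tr,F)\isom\Hom(\Q\Mor_S(\cdot,G),uF)$. Taking $F=\ul{G/S}_\Q^\tr$, so that $uF=\ul{G/S}_\Q$, this identifies the maps of transfer sheaves $\Q(G)^\tr\to\ul{G/S}_\Q^\tr$ with the maps of sheaves $\Q\Mor_S(\cdot,G)\to\ul{G/S}_\Q$, and $a_{G/S}\otimes\Q$ corresponds under it to a unique $a_{G/S}^\tr$ lifting it. It therefore remains to equip $\ul{G/S}_\Q$ with a unique structure of presheaf with transfers, i.e.\ a contravariant functor on the category $\mathbf{Cor}_S$ of finite correspondences over $S$ prolonging the sheaf $\ul{G/S}_\Q$ on $\Sm/S$.

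For uniqueness I would observe that such a structure is entirely determined by the maps $\alpha^*\colon\ul{G/S}_\Q(Y)\to\ul{G/S}_\Q(X)$ attached to elementary correspondences, i.e.\ to integral $Z\subset X\times_S Y$ finite and surjective over a connected (hence integral and normal) $X\in\Sm/S$. Functoriality in $\mathbf{Cor}_S$ forces compatibility of $\alpha^*$ with flat base change, in particular with restriction to the generic point $\eta_X$. Since $G$ is separated and $X$ is integral, the map $\ul{G/S}_\Q(X)\to\ul{G/S}_\Q(\eta_X)$ is injective, so $\alpha^*$ is determined by its counterpart over the residue field $k=\kappa(\eta_X)$ (using that $\ul{G/S}_\Q$ is continuous, as $G$ is of finite presentation). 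Base-changing to an algebraic closure $\bar k$, along which sections again inject, every elementary correspondence from $\Spec\bar k$ is a reduced closed point of $Y_{\bar k}$, so the corresponding $\alpha^*$ is pullback along an honest morphism, namely evaluation at that point. By $\Q$-linearity in the correspondence the whole structure is then forced, and one reads off that it must be the fibrewise ``sum over the points of $Z$'' defined by the group law of $G$, matching the transfers of \cite{AEH} in the field case.

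For existence I would use $\qfh$-descent. By Appendix \ref{app_qfh} the sheaf $\ul{G/S}_\Q$ is a $\qfh$-sheaf, and a $\qfh$-sheaf of $\Q$-vector spaces carries canonical transfers: for a finite surjective morphism $p\colon Z\to X$ with $X$ normal, the pullback $p^*$ is split injective with rational coefficients, producing a trace $\ul{G/S}_\Q(Z)\to\ul{G/S}_\Q(X)$; an elementary correspondence $Z\subset X\times_S Y$ then acts by pullback to $Z$ followed by this trace. These pretransfers prolong $\ul{G/S}_\Q$ on $\Sm/S$, and by the uniqueness computation above they restrict fibrewise to the group-law transfers, hence agree with pullback along graphs and are additive, giving the required structure on $\mathbf{Cor}_S$.

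The main obstacle is the functoriality of this construction on $\mathbf{Cor}_S$: one must check that the trace maps are compatible with composition of correspondences and with the étale sheaf condition. This is precisely the content encoded by $\qfh$-descent together with the theory of relative cycles, and it is here that the excellence hypothesis on $S$ intervenes, controlling normalisations and the finite-surjective part of the $\qfh$-topology.
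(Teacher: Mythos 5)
Your overall architecture is the one the paper uses: the essential input is qfh-descent for $\ul{G/S}_\Q$ (Proposition \ref{prop:qfh-descent}), transfers are then inherited from the qfh-topology, and the statement about $a_{G/S}^\tr$ is a purely formal consequence of the adjunction $\gamma^*\dashv\gamma_*$ together with $\gamma^*\Q(G)=\Q(G)^\tr$ and the Yoneda lemma; this last reduction in your first paragraph is correct and is exactly how the paper argues. The divergence is in the core lemma. Where the paper quotes the Suslin--Voevodsky theorem (Theorem \ref{SVqfh}) and \cite[Proposition 10.5.8]{CD} for the fact that a $\Q$-linear qfh-sheaf carries a \emph{unique} structure of sheaf with transfers, you attempt a self-contained proof of both existence and uniqueness, and your uniqueness argument has a genuine gap in positive and mixed characteristic.

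The gap is this: you claim that functoriality on $\mathbf{Cor}_S$ forces compatibility of the unknown transfer structure first with restriction to $\eta_X$ and then with base change to $\bar k$, so that the splitting of correspondences into points over $\bar k$ determines everything. The first step is indeed forced (restriction to $\eta_X$ is a colimit over dense opens $U\subset X$, and $\alpha\circ\Gamma_{U\hookrightarrow X}$ is a composition in $\mathbf{Cor}_S$), but the second is not: passing from $\kappa(X)$ to $\bar k$ requires evaluating the transfer structure along finite extensions $k'/\kappa(X)$, i.e.\ along finite surjective covers of (opens of) $X$ with function field $k'$, and in characteristic $p$ the inseparable extensions needed to split a general correspondence admit \emph{no} model smooth over $S$, hence give no objects of $\Sm/S$ and are invisible to a presheaf structure on $\Sm^\tr$. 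Concretely, for $S=\Spec\,\mathbb{F}_p[s]$, $X=S$ and $k'=\mathbb{F}_p(s^{1/p})$, any integral scheme with function field $k'$ is generically finite over $S$, so smoothness over $S$ would force $k'/\kappa(X)$ to be separable; thus no smooth model exists, and functoriality on $\mathbf{Cor}_S$ gives no control on the transfer maps after this base change, so ``the whole structure is then forced'' does not follow. (In characteristic zero your argument does work, since the splitting field is separable and so has an \'etale, in particular $S$-smooth, model over a dense open of $X$.) This is not a removable technicality: it is the same inseparability problem that forces the Frobenius--Verschiebung argument in Lemma \ref{lemma:finite}. It also undermines your existence step, since you justify functoriality of the trace maps (compatibility with composition of correspondences) ``by the uniqueness computation above'', and your closing paragraph concedes the point by deferring it to the theory of relative cycles --- but that deferral is precisely the content of Theorem \ref{SVqfh} and \cite[Proposition 10.5.8]{CD}, i.e.\ of the citations the paper's proof rests on and that you set out to replace. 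A secondary inaccuracy: over a non-normal excellent $S$, a connected $X\in\Sm/S$ need not be integral or normal, and finite correspondences in the sense of \cite[Definition 9.1.2]{CD} are relative cycles rather than naive integral subschemes finite and surjective over $X$, so even the initial reduction of your uniqueness argument to elementary correspondences over integral normal $X$ requires more care than stated.
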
 
\begin{proof}
This comes from the fact that $\ul{G/S}_\Q$ is a $\qfh$-sheaf (Proposition \ref{prop:qfh-descent}) and that such sheaves have a unique structure of sheaves with transfers (as follows from Theorem \ref{SVqfh} and the Yoneda lemma).
The morphism $a_{G/S}^\tr$ is defined by taking the $\qfh$-sheafification of $a_{G/S}$ and using the natural isomorphisms of Theorem \ref{SVqfh} and Proposition \ref{prop:qfh-descent}. Uniqueness follows again from Yoneda.
\end{proof}

We can then proceed as in the case of $\Mof(G)$.

\begin{definition}\label{M1 with transfers} 
\begin{enumerate}
\item
Define the (effective) $1$-motive of $G$ 
 \[\Mof^{(\eff)}(G/S)^{\tr}\in \DM^{(\eff)}(S)\]
 to be the motive induced by the sheaf $\ul{G/S}_\Q$ (analogously to Definition 
\item  Define
 \[\alpha^\tr_{G/S}: M_S(G)^\tr \longrightarrow \Mof(G/S)^\tr\] 
 to be the morphism in $\DM(S)$ induced by ${a_{G/S}^\tr}$.
\end{enumerate}
\end{definition}

Recall the adjoint pairs of functors  (see \S \ref{subsection sheaves with without} and \S \ref{subsection with without})
\[ \gamma^*:\Sh_\et(\Sm)\rightleftarrows \Sh_\et(\Sm^\tr):\gamma_* \]
and
\[ L\gamma^*:\DAeff(S)\rightleftarrows \DMeff(S):\gamma_*\ .\]

\begin{proposition}\label{prop: with without}
Let $S$ be excellent.
Let $G$ be a smooth commutative group scheme over $S$.
\begin{enumerate}
\item \label{item: with without 1}
The natural morphism induced from the counit of $\gamma^*\dashv\gamma_*$
for \'etale sheaves
\[  \gamma^*\gamma_*\ul{G/S}^\tr_\Q\to\ul{G/S}^\tr_\Q\]
is an isomorphism of sheaves with transfers. 
\item\label{item: with without 2}
 The natural morphism (induced from the counit of $L\gamma^*\dashv \gamma_*$ for effective motives, followed by infinite suspension in the stable case)
\[L\gamma^*\Mof^{(\eff)}(G/S)\rightarrow\Mof^{(\eff)}(G/S)^\tr\]
is an isomorphism.
\end{enumerate}
\end{proposition}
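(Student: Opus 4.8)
The plan is to prove the two statements in sequence, deriving the second from the first together with general nonsense about the comparison functors between motives with and without transfers.

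\medskip

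For part (\ref{item: with without 1}), first I would recall that $\gamma_*$ is simply the forgetful functor on \'etale sheaves, so that $\gamma_*\ul{G/S}^\tr_\Q$ is just the underlying sheaf $\ul{G/S}_\Q$ \emph{without} transfers. The counit $\gamma^*\gamma_*\to\id$ evaluated on $\ul{G/S}^\tr_\Q$ is then the canonical map from the sheaf with transfers freely generated (in the appropriate \'etale sense) by $\ul{G/S}_\Q$ to $\ul{G/S}^\tr_\Q$ itself. The key point is Theorem \ref{G with transfers}: the transfer structure on $\ul{G/S}_\Q$ is \emph{unique}, being forced by the fact that $\ul{G/S}_\Q$ is a $\qfh$-sheaf. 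I would argue that both the source and the target of the counit have underlying sheaf equal to $\ul{G/S}_\Q$, and that the counit is compatible with the forgetful functor $\gamma_*$, i.e. $\gamma_*$ applied to the counit is the identity (this is a standard triangle identity for the adjunction $\gamma^*\dashv\gamma_*$, since $\gamma_*$ is fully faithful on the essential image, or more directly because the transfer structure is unique so any map of sheaves with transfers lying over $\id_{\ul{G/S}_\Q}$ must be the identity). Since $\gamma_*$ is conservative (it is faithful and reflects isomorphisms of underlying sheaves), this shows the counit is an isomorphism.

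\medskip

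For part (\ref{item: with without 2}), I would pass from sheaves to motives. In the effective case, $\Mof^\eff(G/S)$ is the motive induced by $\ul{G/S}_\Q$ and $\Mof^\eff(G/S)^\tr$ is the motive induced by $\ul{G/S}^\tr_\Q$, and the functors $L\gamma^*$, $\gamma_*$ on effective motives are induced by the corresponding functors on complexes of sheaves. The natural transformation $L\gamma^*\Mof^\eff(G/S)\to\Mof^\eff(G/S)^\tr$ is obtained by deriving the sheaf-level counit; more precisely, since $\gamma^*$ is already exact (left adjoint preserving the relevant structure) one checks $L\gamma^*$ applied to the motive of a sheaf is the motive of $\gamma^*$ applied to that sheaf, up to the cofibrant resolution used in Theorem \ref{theorem:resolution}. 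I would then invoke part (\ref{item: with without 1}): the counit $\gamma^*\gamma_*\ul{G/S}^\tr_\Q\to\ul{G/S}^\tr_\Q$ is an isomorphism of sheaves, and since $\gamma^*\ul{G/S}_\Q=\gamma^*\gamma_*\ul{G/S}^\tr_\Q$, deriving and passing to motives gives that $L\gamma^*\Mof^\eff(G/S)\to\Mof^\eff(G/S)^\tr$ is an isomorphism. Finally the stable case follows by applying $L\Sigma^\infty$ and using that $L\gamma^*$ commutes with infinite suspension.

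\medskip

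The main obstacle I anticipate is the passage from the sheaf-level isomorphism to the motive-level isomorphism, specifically the interaction between $L\gamma^*$ (a \emph{derived} functor) and the computation of $\Mof^\eff$ via the explicit resolution. Because $\ul{G/S}_\Q$ is not cofibrant for the $\A^1$-model structure (precisely the difficulty flagged in the introduction), one cannot naively say $L\gamma^*(\text{motive of }\ul{G/S}_\Q)$ is the motive of $\gamma^*\ul{G/S}_\Q$; instead I would need to use the functorial cofibrant resolution $A_\Q(G)\to\ul{G/S}_\Q$ from Theorem \ref{theorem:resolution}, check that $\gamma^*$ carries it to a cofibrant resolution of $\gamma^*\ul{G/S}_\Q=\gamma^*\gamma_*\ul{G/S}^\tr_\Q\isocan\ul{G/S}^\tr_\Q$ on the transfers side, and confirm that $\gamma^*$ of each representable summand $\Q(\Gh^{a})$ is the corresponding representable sheaf with transfers (so that $\gamma^*A_\Q(G)$ is again cofibrant, being a bounded above complex of representables with transfers). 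Establishing this compatibility of the resolution with $\gamma^*$ is the crux; once it is in place, the isomorphism of part (\ref{item: with without 1}) propagates directly to motives.
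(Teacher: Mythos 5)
Your plan has the same architecture as the paper's proof (establish the sheaf-level statement (\ref{item: with without 1}) first, then propagate it to motives through the resolution of Theorem \ref{theorem:resolution}, using that $\gamma^*$ of a representable sheaf is representable with transfers, hence cofibrant), but each of your two key steps has a genuine gap, and both gaps sit exactly where the paper must invoke its qfh machinery. In part (\ref{item: with without 1}), you assert that the source $\gamma^*\gamma_*\ul{G/S}^\tr_\Q$ of the counit again has underlying sheaf $\ul{G/S}_\Q$ --- equivalently, that the unit $\ul{G/S}_\Q\to\gamma_*\gamma^*\ul{G/S}_\Q$ is an isomorphism. This is not formal and does not follow from Theorem \ref{G with transfers}: $\gamma^*$ is a ``free transfers'' functor and $\gamma_*\gamma^*$ typically \emph{enlarges} a sheaf; already for a representable sheaf one has $\gamma_*\gamma^*\Q(X)\simeq \Q(X)_\qfh|_\Sm$ by Theorem \ref{SVqfh}, which differs from $\Q(X)$ in general. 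Uniqueness of the transfer structure can only be applied once you know that the underlying sheaf of $\gamma^*\ul{G/S}_\Q$ is $\ul{G/S}_\Q$, which is precisely the point to be proven. Likewise, the triangle identity does not say that $\gamma_*$ of the counit is the identity; it says that $\gamma_*(\text{counit})$ is a retraction of the unit, so conservativity of $\gamma_*$ reduces the problem to showing that the unit is an isomorphism --- and that is where all the content lies. The paper proves it by identifying the unit with the qfh-sheafification map $\ul{G/S}_\Q\to(\ul{G/S}_\Q)_\qfh|_\Sm$ (Lemma \ref{specialgamma}(\ref{gamma_qfh}), whose proof itself requires the resolution by representables) and then invoking qfh-descent for smooth commutative group schemes (Proposition \ref{prop:qfh-descent}).

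In part (\ref{item: with without 2}), your claim that ``$\gamma^*$ is already exact (left adjoint preserving the relevant structure)'' is wrong as stated: a left adjoint between abelian categories is only right exact, and the paper explicitly notes that left exactness of $\gamma^*$ is unknown. Consequently it is not formal that $\gamma^*$ carries the quasi-isomorphism $A_\Q(G)\to\ul{G/S}_\Q$ to a quasi-isomorphism of complexes of sheaves with transfers; this is exactly Lemma \ref{specialgamma}(\ref{exact_gamma}), whose proof again goes through qfh-sheafification and a trace argument on normal schemes (this is also where excellence of $S$ enters). You correctly single out this compatibility of the resolution with $\gamma^*$ as ``the crux'', but you offer no argument for it, and without it neither the computation of $L\gamma^*\Mof^\eff(G/S)$ via the cofibrant resolution nor the reduction of (\ref{item: with without 2}) to (\ref{item: with without 1}) goes through. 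In short: both missing steps are non-trivial and are supplied in the paper by Proposition \ref{prop:qfh-descent} and Lemma \ref{specialgamma}, which is why the paper routes the entire argument through qfh-descent rather than through uniqueness of transfers.
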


\begin{proof}
We have
\[ L\gamma^*\Mof(G)=L\gamma^*L\Sigma^\infty\Mof^\eff(G)\simeq L\Sigma^\infty_\tr L\gamma^*\Mof^\eff(G)\]
 by Lemma \ref{lemma_transfers} (\ref{suspension_transfers}). Since this isomorphism is compatible with the counit maps of the adjunctions $L\gamma^*\dashv \gamma_*$, it is thus enough to treat the effective case in (\ref{item: with without 2}). We first reduce to item (\ref{item: with without 1}) as follows.

 We apply Theorem \ref{theorem:resolution} to the sheaf on $\Sch/S$ represented by $G$ 
(that we also denote by $\ul{G}$ for simplicity). This yields a resolution
\[ r_\Q:A_\Q(G)\longrightarrow \gamma_*(\ul{G}_\Q)^\tr\]
where $A_\Q(G)$ is a bounded above complex of sheaves of $\Q$-vector spaces
whose terms are finite sums of sheaves represented by smooth $S$-schemes. Hence
$(r_\Q)_{|\Sm}$ is a cofibrant resolution of $\ul{G}_\Q$ in the projective model structure on complexes of sheaves on $\Sm/S$. 
By definition of the derived counit of a Quillen adjunction, the counit map $\eta_{\Mof^\eff(G/S)^\tr}$ is the image of the following composition at the model category level:
\[
\gamma^*A_\Q(G)_{|\Sm}\stackrel{\gamma^*(r_Q)_{\Sm}}{\longrightarrow} \gamma^*\gamma_*(\ul{G}_\Q)^\tr\stackrel{\eta_{(\ul{G}_\Q)^\tr}}{\longrightarrow}(\ul{G}_\Q)^\tr \ .
\] 
The assumptions of Lemma \ref{specialgamma} are satisfied for $A_\Q\stackrel{r_\Q}{\rightarrow}{\ul{G}_\Q}$. By point (\ref{exact_gamma}) of this lemma, the first map in this composition is a quasi-isomorphism. 
It thus remains to show that the second map is an isomorphism, which is precisely item (\ref{item: with without 1}). 

The functor $\gamma_*$ at the level of sheaves with transfers is conservative so it is enough to show that $\gamma_*\eta_{(\ul{G}_\Q)^\tr}$ is an isomorphism. By Lemma \ref{specialgamma} (\ref{gamma_qfh}) and the triangular identity of adjunctions we have the following commutative diagram:
\[
\xymatrix{\gamma_*\gamma^*\ul{G}_\Q \ar@/^1pc/[rr]^{\id}\ar[r]_{\gamma_*\eta} & \ul{G}_\Q \ar[r]_\epsilon \ar[rd]_{\epsilon'} & \gamma_*\gamma^*\ul{G}_\Q\\
& & (\ul{G}_\Q)_{\qfh}|_\Sm \ar[u]_{\sim}.
}
\]
Proposition \ref{prop:qfh-descent} shows that the map $\epsilon'$ is an isomorphism. Together with the diagram, this finishes the proof.
\end{proof}

\begin{proposition}\label{prop comp excellent}
Let $S$ be excellent. 
Modulo the isomorphisms of Proposition \ref{prop: with without}, 
$\gamma^*$ sends the morphism $a_{G/S}$ to $a_{G/S}^\tr$ and
$L\gamma^*$ sends the morphism
$\alpha_G$ to $\alpha_G^\tr$, the morphism $\phi_G$ to $\phi_G^\tr$ and the morphism $\psi_G$ to $\psi_G^\tr$.
\end{proposition}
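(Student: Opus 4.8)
The plan is to reduce everything to the compatibility statement for the basic morphism $a_{G/S}$ and then propagate it through the constructions that define $\alpha_G$, $\phi_G$ and $\psi_G$. First I would treat the statement that $\gamma^*$ sends $a_{G/S}$ to $a_{G/S}^\tr$ at the level of sheaves. By construction, $a_{G/S}\colon \Z\Mor_S(\cdot,G)\to\ul{G/S}$ is induced by the addition map of $G$, and $a^\tr_{G/S}$ was defined in Theorem \ref{G with transfers} by $\qfh$-sheafifying $a_{G/S}$ and using the identifications of Theorem \ref{SVqfh} and Proposition \ref{prop:qfh-descent}. Since $\gamma^*$ is a left adjoint that is determined by its effect on representable sheaves, and $a_{G/S}^\tr$ is the \emph{unique} map of sheaves with transfers extending $a_{G/S}$ (again by Yoneda, as in Theorem \ref{G with transfers}), it suffices to check that $\gamma^* a_{G/S}$ is a map of sheaves with transfers restricting to $a_{G/S}$ on the underlying presheaves; uniqueness then forces the equality $\gamma^* a_{G/S}=a^\tr_{G/S}$ after sheafification. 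Here the identifications of Proposition \ref{prop: with without}(\ref{item: with without 1}) are exactly what allows us to view both sides as maps with the same target $\ul{G/S}^\tr_\Q$.

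Next I would pass from sheaves to motives. Applying $L\gamma^*$ to the morphism $\alpha^\eff_{G/S}$ and using the compatibility of the cofibrant resolution $A_\Q(G)$ with $\gamma^*$ established in the proof of Proposition \ref{prop: with without}, the image $L\gamma^*\alpha^\eff_{G/S}$ is computed at the model-category level by applying $\gamma^*$ to the lift $\tilde a_{G/S}\otimes\Q\colon \Q(G)\to A_\Q(G)$ from property (\ref{lift}) of Theorem \ref{theorem:resolution}. Since $\gamma^*$ is monoidal and commutes with suspension by Lemma \ref{lemma_transfers}(\ref{suspension_transfers}), and since the resolutions on both sides match under $\gamma^*$, this image is identified with $\alpha^\tr_{G/S}$ once we invoke the sheaf-level statement of the previous paragraph. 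The stable case follows from the effective one by commuting $L\gamma^*$ past $L\Sigma^\infty$, exactly as in the proof of Proposition \ref{prop: with without}(\ref{item: with without 2}).

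Finally, the morphisms $\phi_G$ and $\psi_G$ are built functorially out of $\alpha_G$ (and the K\"unneth/symmetric-power structure) on the side without transfers, with $\phi_G^\tr$ and $\psi_G^\tr$ defined by the analogous recipe with transfers. Since $L\gamma^*$ is monoidal and symmetric-monoidal, it commutes with the formation of symmetric powers and of the direct-sum projectors used to define these maps; thus once $L\gamma^* \alpha_G = \alpha_G^\tr$ is known, applying $L\gamma^*$ to the defining diagrams of $\phi_G$ and $\psi_G$ and using naturality yields $L\gamma^*\phi_G=\phi_G^\tr$ and $L\gamma^*\psi_G=\psi_G^\tr$.

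I expect the main obstacle to be the first step: carefully matching the \emph{target} identifications so that ``$\gamma^*$ sends $a_{G/S}$ to $a^\tr_{G/S}$'' is a well-posed equality rather than merely an equality up to the isomorphisms of Proposition \ref{prop: with without}. The cleanest route is to phrase everything in terms of the universal property (uniqueness of transfer structures on $\qfh$-sheaves via Yoneda), so that the comparison becomes formal once both sides are exhibited as maps of sheaves with transfers lifting the same underlying map; the derived/model-categorical bookkeeping for $\alpha_G$, $\phi_G$, $\psi_G$ is then routine.
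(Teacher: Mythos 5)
Your proposal is correct, and its skeleton coincides with the paper's: reduce $\phi_G$ and $\psi_G$ to $\alpha_G$ via monoidality of $L\gamma^*$, reduce the stable statement to the effective one via compatibility with $L\Sigma^\infty$, and then identify $L\gamma^*\alpha^\eff_{G/S}$ with $\alpha^{\eff,\tr}_{G/S}$. Where you diverge is in how that last identification is carried out. The paper never re-opens the model category at this point: granted that $a^\tr_{G/S}$ extends $a_{G/S}$ (Theorem \ref{G with transfers}), it writes the corresponding commutative square of sheaves with the unit $\epsilon_{\Q(G)}$ on the left, passes to $\DAeff(S)$, applies $L\gamma^*$, stacks with the naturality square of the derived counit, and concludes by the triangular identity together with the isomorphism of Proposition \ref{prop: with without} --- a purely formal unit/counit argument. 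You instead compute $L\gamma^*\alpha^\eff_{G/S}$ at the model level, using that $\Q(G)$ and $A_\Q(G)$ are cofibrant and that $\alpha^\eff_{G/S}$ is represented by the lift $\tilde a_{G/S}\otimes\Q$ from Theorem \ref{theorem:resolution}(\ref{lifting}) (a property the paper uses for pullbacks, not here), and you pin down the sheaf-level equality by the uniqueness clause of Theorem \ref{G with transfers}. Both routes are valid; yours buys a very concrete description of the map being compared, while the paper's buys economy (no appeal to the lifting property, and the resolution appears only through Proposition \ref{prop: with without}). One caution: the step you flag as the ``main obstacle'' --- verifying that $\eta_{(\ul{G}_\Q)^\tr}\circ\gamma^*(a_{G/S}\otimes\Q)$ really does extend $a_{G/S}$, so that uniqueness applies --- is left implicit in your write-up, and making it explicit is exactly the naturality-of-the-unit plus triangular-identity computation that constitutes the paper's stacked diagram; so the two proofs have the same formal core, and you should include that verification rather than treat it as automatic.
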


 \begin{proof}
The construction of $\phi_G$ and $\psi_G$ from $\alpha_G$ together with 
the fact that $L\gamma^*$ is monoidal (Lemma~\ref{lemma_transfers}~(\ref{transfers_tensor})) shows that it is enough the statement for $\alpha_G$. As in the  proof of Proposition \ref{prop: with without}, it is then enough to treat the effective statement. 

We have a natural commutative diagram of sheaves without
transfers
\[\xymatrix{
\Q(G)\ar[r]^{\alpha_{G}} \ar[d] & \ul{G}_\Q \ar[d]^\sim\\
\gamma_*\Q(G)^\tr\ar[r]^{\gamma_*\alpha_G^\tr} &\gamma_*(\ul{G}_\Q)^\tr
}\]
where the left vertical map is the unit map $\epsilon_{\Q(G)}$. This diagram induces a commutative diagram in $\DAeff(S)$ whose left vertical map is the derived unit map $\epsilon_{M_S^\eff(G)}$. We apply the functor $L\gamma^*$, and stack the resulting diagram with a diagram coming from the naturality of the derived counit maps:
\[\xymatrix@C=1.8cm{
M^\eff_S(G)^\tr\ar[r]^{L\gamma^*\alpha_G} \ar[d]_{L\gamma^*\epsilon_{M_S^\eff(G)}} & L\gamma^*M_1^\eff(G) \ar[d]^\sim \\
L\gamma^*\gamma_*M^\eff_S(G)^\tr\ar[r]^{L\gamma^*\gamma_*\alpha_{G}^\tr} \ar[d]_{\eta_{M^\eff_S(G)^\tr}} & L\gamma^*\gamma_*M^\eff_1(G)^\tr \ar[d]_{\eta_{M_1^\eff(G/S)^\tr}}^\sim \\
M^\eff_S(G)^\tr\ar[r]^{\alpha_G^\tr} & M^\eff_1(G)^\tr.
}
\]
The left vertical composition is the identity by the triangular identity for the object $M_S^\eff(G)$ and the right vertical composition is the isomorphism of Proposition \ref{prop: with without}. This finishes the proof. 
\end{proof}

\section{Statement of the main theorem}\label{sect3}

\begin{definition} 

\label{def:main_def}
Let $\Mof^{(\eff)}(G/S)$, $\alpha^{(\eff)}_{G/S}$ be as in Definition~\ref{def:main_defM1}. For any integer $n \geq 0$ write $\Delta^n_G$ for the $n$-fold diagonal immersion.
\begin{enumerate}
 \item We define $\vp^{(\eff)}_{n,G}$ to be the morphism
\[ \vp^{(\eff)}_{n,G}:  M_S^{(\eff)}(G) \sxra{M^{(\eff)}_S(\Delta_{G}^{n})} M^{(\eff)}_S(G)^{\otimes n} \sxra{\alpha_{G/S}^{(\eff)\otimes n}} \Mof^{(\eff)}(G/S)^{\otimes n}.\]
As $\Delta^n_G$ is invariant under permutations, this factors uniquely 
\[\xymatrix{
M^{(\eff)}(G)\ar[rr]^{\alpha_{G/S}^{(\eff)\otimes n}\circ M^{(\eff)}_S(\Delta_G^n)}\ar[rd]_{\vp^{(\eff)}_{n,G}}&&   \Mof^{(\eff)}(G)^{\otimes n}\\
   &  \Sym^n(\Mof^{(\eff)}(G))\ar[ur]
}\]

\item Let $\kd(G/S)$ be the integer in Definition \ref{globalKimura}.
Define the morphism  
\[ \vp^{(\eff)}_{G/S}= \sum_{n = 0}^{\kd(G/S)} \vp^{(\eff)}_{n,G}: M_S^{(\eff)}(G) \longrightarrow \bigoplus_{n = 0}^{\kd(G/S)}\Sym^n(\Mof^{(\eff)}(G/S)).\]

 \item Let $p_G$ be as in Lemma \ref{lemma:connected_comp}. Define the morphism
\[\psi^{(\eff)}_{G/S} : M^{(\eff)}_S(G) \longrightarrow \left(\bigoplus_{n=0}^{\kd(G/S)} \Sym^n\Mof^{(\eff)}(G/S)\right) \otimes M^{(\eff)}_S(\pi_0(G/S))\]
to be $\psi^{(\eff)}_{G/S}= (\vp^{(\eff)}_{G/S} \otimes M^{(\eff)}_S(p_G)) \circ M^{(\eff)}_S(\Delta^2_G)$.
\end{enumerate}
\end{definition}

\begin{remark}
The K\"unneth formula holds by Proposition \ref{lemma_motives}(\ref{tensor_kunn}). This is used in the definition above to get a morphism
\[M^{(\eff)}_S(\Delta^n_G):M^{(\eff)}_S(G)\rightarrow M^{(\eff)}_S(G\times_S\cdots \times_S G)= M^{(\eff)}_S(G)^{\otimes n} \ , \] as well as to define $\psi^{(\eff)}_{G/S}$.

The morphisms $\vp^{(\eff)}_{G}$ and $\psi^{(\eff)}_{G/S}$ are the unique extensions of $\alpha_G$ compatible with the natural comultiplication on both sides; see \cite[Section 3.2]{AEH} for a more detailed discussion.
\end{remark}

The main result of the paper is the following theorem.

\begin{theorem}\label{MainThm}
Let $S$ be a noetherian finite dimensional scheme and $G/S$ a smooth commutative group scheme of finite type over $S$.

For $m\in \Z$, let $[m]:G\to G$ be the morphism of multiplication by $m$. Let $\Mof(G/S)\in \DA(S)$ be the motive from Definition \ref{def:main_defM1}, $\psi_{G/S}$ be the map as in Definition \ref{def:main_def}, $o(\pi_0(G/S))$ and $\kd(G/S)$ be the integers in Definitions \ref{orderpi} and \ref{globalKimura}. Then the following statements hold.
\begin{enumerate}
\item The relative motive $\Mof(G/S)$ is odd of Kimura dimension $\kd(G/S)$.
\item \label{main_iso} The map 
\[\psi_{G/S} : M_S(G) \longrightarrow \left(\bigoplus_{n= 0}^{\kd(G/S)} \Sym^n\Mof(G/S)\right) \otimes M(\pi_0(G/S))\]
is an isomorphism of motives. It is natural in $G \in \CGS_S$, it commutes with base change (in $S$) and it respects the natural structures of Hopf algebras.
\item The motives $\Mof(G)$ and $M(\pi_0(G/S))$ are geometric motives. 
\item \label{decmultn} The direct factor \[\mathfrak{h}_n(G/S)=\psi_{G/S}^{-1}\left(\Sym^n\Mof(G/S) \otimes M_S(\pi_0(G/S))\right)\] of $M_S(G)$  is intrinsically characterized as follows: for $m\in\Z$ that is equal to $1$ modulo $o(\pi_0(G/S))$, the map $M_S([m])$ operates on $\mathfrak{h}_n(G/S)$ as $m^n \id$. 
\end{enumerate}
\end{theorem}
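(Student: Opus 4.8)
The plan is to reduce the whole statement to the case where $S$ is the spectrum of an algebraically closed field, where it is essentially the main result of \cite{AEH}. The two tools that drive the reduction are the base change compatibility of the entire construction (Proposition~\ref{pullback_group}) and the conservativity of the family of pullbacks to geometric points in the \emph{stable} category (Lemma~\ref{vanishing}); the latter is precisely why we are forced to work stably. I expect the two genuinely delicate points to be the comparison with transfers needed to recover the \cite{AEH} morphism over a field, and this stable conservativity input; everything else is bookkeeping with monoidal, conservative fibre functors.

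First I would settle the field case. Suppose $S=\Spec k$ with $k$ algebraically closed, hence perfect and excellent. The comparison functor $L\gamma^*\colon\DA(S)\to\DM(S)$ is an equivalence over such a base by \cite{CD}, and by Propositions~\ref{prop: with without} and \ref{prop comp excellent} it identifies $\Mof(G/S)$ with $\Mof(G/S)^{\tr}$ and $\psi_{G/S}$ with the morphism $\psi_{G/S}^{\tr}$ assembled from $\alpha^{\tr}_{G/S}$ (which in turn rests on the $\qfh$-descent of Theorem~\ref{G with transfers}). The latter morphism is exactly the infinite suspension of the one studied in \cite{AEH}, which is proved there to be an isomorphism and to exhibit $\Mof(G/S)^{\tr}$ as an odd object of Kimura dimension $2g+r$, for $g,r$ the abelian and torus ranks of $G$. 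Transporting back along $L\gamma^*$ yields parts (1) and (2) over $k$.

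Next I would treat a general base $S$. To prove $\psi_{G/S}$ is an isomorphism it suffices, by Lemma~\ref{vanishing}, to check this after pullback $\bar{s}^*$ along every geometric point $\bar{s}\to S$. Since $\bar{s}^*$ is monoidal and, by Proposition~\ref{pullback_group} together with the base change invariance of $\pi_0(G/S)$, carries $\psi_{G/S}$ to $\psi_{G_{\bar{s}}/\bar{s}}$ (the summands with $2g_{\bar{s}}+r_{\bar{s}}<n\le\kd(G/S)$ vanishing by the field case), this reduces to the already established field case, giving part (2). For part (1) I would note that $\Sym^{\kd(G/S)+1}\Mof(G/S)$ and $\Sym^{\kd(G/S)}\Mof(G/S)$ pull back under the monoidal functor $\bar{s}^*$ to $\Sym^{\kd(G/S)+1}\Mof(G_{\bar{s}})$ and $\Sym^{\kd(G/S)}\Mof(G_{\bar{s}})$; the former vanishes on every fibre since $\kd(G/S)\ge 2g_{\bar{s}}+r_{\bar{s}}$, while the latter is nonzero on a generic point realizing the maximum by Lemma~\ref{lemma:kd_semi_cont}. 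By conservativity this shows $\Mof(G/S)$ is odd of Kimura dimension exactly $\kd(G/S)$.

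Parts (3) and (4) are then formal. For geometricity, after replacing $G$ by $G^0$ (Lemma~\ref{lemma:connected_reduction}, which also trivialises $\pi_0$) the isomorphism of part (2) exhibits $\Mof(G/S)=\Mof(G^0/S)$ as the degree-one summand of the geometric motive $M_S(G^0)$, hence as a geometric motive, and $M_S(\pi_0(G/S))=\mathfrak{h}_0(G/S)$ as a direct summand of $M_S(G)$, hence geometric. Naturality in $G$ comes from that of $\alpha_{G/S}$ (Lemma~\ref{lemma:connected_reduction}), base change compatibility was shown along the way, and compatibility with the Hopf structures is built into the definition of $\psi_{G/S}$ (cf. \cite{AEH}). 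For (4), multiplication $[m]$ acts on the $\Q$-linear sheaf $\ul{G/S}_\Q$, hence on $\Mof(G/S)$, as the scalar $m$, so it acts on $\Sym^n\Mof(G/S)$ as $m^n$; and if $m\equiv 1\pmod{o(\pi_0(G/S))}$ then $[m]$ is the identity on $\pi_0(G/S)$, whose exponent divides $o(\pi_0(G/S))$ on every geometric fibre, so $M_S([m])$ is the identity on $M_S(\pi_0(G/S))$. Thus $M_S([m])$ acts on $\mathfrak{h}_n(G/S)=\Sym^n\Mof(G/S)\otimes M_S(\pi_0(G/S))$ as $m^n\,\id$; choosing one such $m\ge 2$ makes the scalars $m^0,\dots,m^{\kd(G/S)}$ pairwise distinct, so the $\mathfrak{h}_n(G/S)$ are precisely the corresponding eigenspaces, which gives the intrinsic characterization.
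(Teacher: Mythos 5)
Your proposal is correct and follows essentially the same route as the paper: settle the field case by transporting the isomorphism of \cite{AEH} through the transfers comparison (Theorem \ref{G with transfers}, Propositions \ref{prop: with without} and \ref{prop comp excellent}), then reduce the general case to geometric fibres via the conservativity of Lemma \ref{vanishing} and the base-change compatibility of Proposition \ref{pullback_group}, with parts (3) and (4) handled formally. The only (harmless) differences are that you make explicit two points the paper leaves implicit: the non-vanishing of $\Sym^{\kd(G/S)}\Mof(G/S)$ after pullback to a fibre realizing the maximum, and the distinct-eigenvalue argument underlying the intrinsic characterization in part (4).
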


\begin{remark} $M_S(G)$ and $M(\pi_0(G/S))$ carry a Hopf algebra structure
because $G/S$ and $\pi_0(G/S)$ are group objects.
The Hopf algebra structure on $\bigoplus_{n\geq 0}\Sym^n\Mof(G/S)$ is
the one of the {\em symmetric coalgebra}; see \cite[Appendix B]{AEH}.
It is isomorphic but not identical to the symmetric Hopf algebra.
\end{remark}

\begin{remark}\label{remarkeffective}
We expect the morphism $\psi^{\eff}_{G/S}$ to be already  an isomorphism. Most steps of the proof take place in $\DAeff(S)$. When $S$ is of characteristic 0, the result of \cite[Annexe B.]{Ayoub_Galois_1} can be used to show that $\psi^{\eff}_{G/S}$ is an isomorphism; since this requires some extensions of results from stable to effective motives, which are easy but not in the literature, we do not write the proof here.  An effective proof in general would require a comparison between
effective \'etale motives with and without transfers over an arbitrary field, see
the discussion in Appendix \ref{app_transfers}. An alternative approach
would be to try to redo \cite{AEH} in the category of motives without transfers.
There the missing ingredient is a special case of the comparison, namely a transfer-free computation of the effective motivic cohomology of curves.
\end{remark}

\begin{remark}
The theorem above holds in the more general context of commutative algebraic group spaces; see Section \ref{sect:alggroups}.
\end{remark}

As a consequence, we also get a version of the theorem for motives with transfers.

\begin{theorem}\label{MainThmwithtransfers}
Let $S$ be an excellent (noetherian and finite dimensional) scheme and $G/S$ a smooth commutative group scheme of finite type over $S$. Let $\psi_{G/S}$ be the map as in Definition \ref{def:main_def} and $L\gamma^*$ the functor defined in \S \ref{subsection with without}. Let $M_S(G)^{\tr}$ and $M_S(\pi_0(G/S))^{\tr}$ be the relative motives with transfers of $G$ and $\pi_0(G/S)$  and $\Mof(G/S)^{\tr}\in \DM(S)$ as in Definition \ref{M1 with transfers}.
Then 
\[L\gamma^*\psi_{G/S} : M_S(G)^{\tr} \isocan \left(\bigoplus_{n = 0}^{\kd(G/S)} \Sym^n\Mof(G/S)^{\tr}\right) \otimes M_S(\pi_0(G/S))^{\tr} \]
is an isomorphism and the analogous of Theorem \ref{MainThm} holds.
\end{theorem}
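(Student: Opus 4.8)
The plan is to deduce Theorem \ref{MainThmwithtransfers} from Theorem \ref{MainThm} by transporting the isomorphism along the comparison functor $L\gamma^*:\DAeff(S)\to\DMeff(S)$ (and its stable version). The key is that $L\gamma^*$ is a monoidal triangulated functor and that, by Proposition \ref{prop: with without}(\ref{item: with without 2}), it carries the motive $\Mof(G/S)$ to $\Mof(G/S)^\tr$ and $M_S(G)$ to $M_S(G)^\tr$ (the latter being the standard compatibility of the two notions of motive of a smooth scheme). First I would observe that the statement to be proved is literally the assertion that $L\gamma^*$ applied to the isomorphism $\psi_{G/S}$ of Theorem \ref{MainThm}(\ref{main_iso}) is again an isomorphism with the stated source and target.

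The main work, then, is purely formal bookkeeping to identify the image under $L\gamma^*$ of each object appearing in the decomposition. Since $L\gamma^*$ is an equivalence-preserving exact functor it sends isomorphisms to isomorphisms, so $L\gamma^*\psi_{G/S}$ is automatically an isomorphism; what requires care is checking that its target is canonically
\[\left(\bigoplus_{n = 0}^{\kd(G/S)} \Sym^n\Mof(G/S)^{\tr}\right) \otimes M_S(\pi_0(G/S))^{\tr}.\]
For this I would use that $L\gamma^*$ is monoidal to commute it past the tensor product and the finite direct sum, and that it commutes with the symmetric power functors $\Sym^n$ (which follows from monoidality together with $\Q$-linearity, so that the idempotent projecting onto the symmetric part is preserved). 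Applying Proposition \ref{prop: with without}(\ref{item: with without 2}) to both $G$ and $\pi_0(G/S)$ then identifies $L\gamma^*\Mof(G/S)\simeq\Mof(G/S)^\tr$ and $L\gamma^*M_S(\pi_0(G/S))\simeq M_S(\pi_0(G/S))^\tr$, yielding the desired target. The compatibility of the map itself is ensured by Proposition \ref{prop comp excellent}, which says precisely that $L\gamma^*$ sends $\alpha_{G}$, $\phi_G$ and $\psi_G$ to their transfer counterparts.

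For the final clause, "the analogous of Theorem \ref{MainThm} holds", I would transport each of the four assertions in turn. The Kimura dimension and oddness statement in part (1) survive because $L\gamma^*$ is monoidal and exact, hence preserves the relevant vanishing of symmetric (or antisymmetric) powers that defines finite Kimura dimension; naturality in $G$ and base-change compatibility in part (2) follow from the corresponding naturality statements for $\psi_{G/S}$ combined with the naturality of the comparison functors and of the isomorphisms in Proposition \ref{prop: with without}. The geometricity in part (3) is preserved because $L\gamma^*$ sends geometric motives to geometric motives, being induced by a functor on the generating smooth schemes. The intrinsic characterization in part (4) transports because $L\gamma^*$ respects the action of $M_S([m])$, so the eigenspace decomposition is carried to the corresponding decomposition of $M_S(G)^\tr$.

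The only genuine subtlety, and the step I expect to be the main obstacle, is the excellence hypothesis: Proposition \ref{prop: with without} and Proposition \ref{prop comp excellent} both require $S$ to be excellent, since they rest on the $\qfh$-descent of $\ul{G/S}_\Q$ and the resulting transfer structure of Theorem \ref{G with transfers}. This is exactly why Theorem \ref{MainThmwithtransfers} is stated only for excellent $S$, in contrast with Theorem \ref{MainThm}, and I would make sure the argument never secretly uses the transfer comparison over a non-excellent base. Beyond this the proof is a formal consequence of the monoidality and exactness of $L\gamma^*$ together with the two comparison propositions, so no further estimates or constructions are needed.
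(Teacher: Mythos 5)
Your proposal is correct and follows essentially the same route as the paper: apply $L\gamma^*$ to the isomorphism of Theorem \ref{MainThm}, use that $L\gamma^*$ is monoidal (hence commutes with $\otimes$, finite sums and the idempotents defining $\Sym^n$), and identify source and target via the comparison results, with the remaining assertions of Theorem \ref{MainThm} transported formally. One citation slip worth fixing: the identifications $L\gamma^*M_S(G)\cong M_S(G)^\tr$ and $L\gamma^*M_S(\pi_0(G/S))\cong M_S(\pi_0(G/S))^\tr$ come from Lemma \ref{lemma_transfers}(\ref{transfer on motives}), not from Proposition \ref{prop: with without}, which concerns only the $1$-motive $\Mof$ of a smooth commutative group \emph{scheme} and so neither applies to the \'etale group space $\pi_0(G/S)$ nor says anything about the full motive $M_S(\pi_0(G/S))$.
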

\begin{proof}
First apply $L\gamma^*$ to the isomorphism in Theorem \ref{MainThm}. Then note that by Proposition \ref{prop: with without} we have $L\gamma^*\Mof(G/S) \cong \Mof(G/S)^{\tr}$ and  by Lemma \ref{lemma_transfers}~(\ref{transfer on motives}) we have $L\gamma^*M_S(G)\cong M_S(G)^{\tr}$ and $L\gamma^*M_S(\pi_0(G/S)) \cong M_S(\pi_0(G/S))^{\tr}$.
\end{proof}

\begin{remark}\label{other categories}
\begin{enumerate}

\item Using Theorem \ref{SVqfh} one can give an alternative description of $L\gamma^*\psi_{G/S}$ using qfh sheafification. Recall that the morphism $\psi_{G/S}$ is formally constructed from a morphism of sheaves $a_{G/S}\otimes \Q$ (Definition \ref{def:sheaf_G}). If one replaces in this formal construction $a_{G/S} \otimes \Q$ by its qfh sheafification and $\DA(S)$ by $\DM(S)$ one ends with $L\gamma^*\psi_{G/S}$.

\item By the work of Cisinski and D\'eglise, the different categories of motives are related by functors. By replacing $L\gamma^*$ by those functors one obtains analogous results in the other categories of motives. 

More precisely, for the category  $\DM_h(S,\Q)$ of h-motives use \cite[Theorem 16.1.2]{CD} (one needs to suppose $S$ excellent noetherian and finite dimensional) and for the category  $\DM_{\Bei}(S)$ of Beilinson motives use \cite[Theorem 16.2.18]{CD} ($S$ noetherian and finite dimensional). 
\end{enumerate}
\end{remark}

\begin{theorem}\label{corchow}
Suppose that $S$ is regular. The $i$-th Chow group of $G$ with rational coefficients decomposes as
\[\CH^i(G)_{\Q}=\bigoplus_{j=0}^{\kd(G/S)}\CH^i_j(G),\]
where \[\CH^i_j(G)=\{Z \in\CH^i(G)_{\Q} \, | \, [m]^*Z=m^j Z \, , \forall m \equiv 1 \, (\modulo\ o(\pi_0(G/S))) \} \ .\]
\end{theorem}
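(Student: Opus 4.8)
The plan is to deduce the Chow group decomposition from the motivic decomposition of Theorem \ref{MainThm} by passing to a suitable realization of morphisms in $\DA(S)$ as operations on Chow groups. Since $S$ is regular, the Chow group $\CH^i(G)_\Q$ can be recovered from morphisms in the triangulated category of motives: concretely, for $S$ regular one has $\CH^i(G)_\Q \cong \Hom_{\DA(S)}(M_S(G), \one(i)[2i])$ (using that over a regular base the various motivic categories agree and compute motivic cohomology, cf.\ the comparison results cited after Remark \ref{other categories}). Thus the key point is that the endomorphism $M_S([m])$ of $M_S(G)$ induces, by functoriality of $\Hom(-,\one(i)[2i])$, precisely the pullback operation $[m]^*$ on $\CH^i(G)_\Q$.

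First I would fix the identification $\CH^i(G)_\Q \cong \Hom_{\DA(S)}(M_S(G),\one(i)[2i])$ and check that a morphism $u\colon M_S(G)\to M_S(G)$ acts on this $\Hom$-group by precomposition, so that in particular $M_S([m])$ acts as $[m]^*$. Next I would apply the contravariant functor $\Hom_{\DA(S)}(-,\one(i)[2i])$ to the direct sum decomposition $M_S(G)\isocan \bigoplus_{n=0}^{\kd(G/S)} \mathfrak{h}_n(G/S)$ coming from the isomorphism $\psi_{G/S}$ of Theorem \ref{MainThm}(\ref{main_iso}), together with the characterization in Theorem \ref{MainThm}(\ref{decmultn}). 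This yields a direct sum decomposition
\[
\CH^i(G)_\Q \cong \bigoplus_{n=0}^{\kd(G/S)} \Hom_{\DA(S)}(\mathfrak{h}_n(G/S),\one(i)[2i]).
\]
Setting $\CH^i_j(G)$ to be the $j$-th summand, I then need to verify that this summand is exactly the eigenspace described in the statement.

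The eigenspace identification is where the weight information of Theorem \ref{MainThm}(\ref{decmultn}) does the work: for $m\equiv 1 \pmod{o(\pi_0(G/S))}$, the endomorphism $M_S([m])$ restricts to multiplication by $m^n$ on $\mathfrak{h}_n(G/S)$. Applying $\Hom(-,\one(i)[2i])$ and using that this functor sends multiplication by the scalar $m^n$ on the source to multiplication by $m^n$ on the target, I conclude that $[m]^*$ acts as $m^n\id$ on the $n$-th summand. Conversely, since the scalars $m^n$ for distinct $n$ can be separated (for $m$ suitably chosen the values $m^0,\dots,m^{\kd(G/S)}$ are distinct), the common eigenspace condition $[m]^*Z=m^jZ$ for all admissible $m$ pins down exactly the $j$-th summand. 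Relabeling $n$ as $j$ gives the stated decomposition with the stated eigenspace description.

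The main obstacle I anticipate is the first step: justifying cleanly that $\CH^i(G)_\Q$ is represented by the motivic $\Hom$-group in $\DA(S)$ and that $[m]^*$ corresponds to precomposition by $M_S([m])$. This requires the regularity hypothesis on $S$ (to have the comparison with Beilinson motives and the identification of motivic cohomology with Chow groups, via the cited theorems of Cisinski--D\'eglise) and a careful check that the contravariant functoriality of $\Hom_{\DA(S)}(-,\one(i)[2i])$ matches the contravariant functoriality of Chow groups under the finite flat (indeed \'etale on each graded piece) map $[m]$. Once this dictionary between motivic morphisms and Chow operations is in place, the remainder is a formal consequence of the decomposition and eigenvalue computation already established in Theorem \ref{MainThm}.
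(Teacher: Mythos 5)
Your proposal is correct and follows essentially the same route as the paper: identify $\CH^i(G)_\Q$ with $\Hom(M_S(G),\Q(i)[2i])$ in the motivic category over the regular base $S$ via the Cisinski--D\'eglise comparison results (\cite[Corollary 14.2.14, Theorem 16.2.18]{CD}, with \cite[20.1]{Ful} for Chow groups versus K-theory), then apply the contravariant $\Hom$ functor to the decomposition $M_S(G)\cong\bigoplus_n\mathfrak{h}_n(G/S)$ and the eigenvalue characterization of Theorem \ref{MainThm}(\ref{decmultn}). The paper's proof is just a terser version of yours, leaving implicit the compatibility of $M_S([m])$ with $[m]^*$ and the eigenspace separation that you spell out.
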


\begin{proof}
Since $S$ is regular noetherian, by \cite[Corollary 14.2.14]{CD} (and \cite[20.1]{Ful} for the comparison between Chow groups and K-theory), there is a canonical isomorphism 
\[\Hom_{\DM_{\Bei}(S)}(M_S(G),\Q(i)[2i])\cong \CH^i(G)_{\Q} \ .\]
Moreover, there is a canonical equivalence between $\DM_{\Bei}(S)$ and $\DA(S)$ \cite[Theorem 16.2.18]{CD}. Hence the decomposition $M_S(G)=\bigoplus_{i=0}^{\kd(G/S)}\mathfrak{h}_n(G/S)$ of Theorem \ref{MainThm} implies the one in the statement.
\end{proof}

\begin{remark}
\begin{enumerate}
\item In the same way one has a decomposition of the higher Chow groups or of the Suslin homology.
\item Some of the eigenspaces $\CH^i_j(G)$ above should be zero. For example, if $S$ is the spectrum of a field and $G$ is an abelian variety of dimension $g$, then Beauville \cite{Beau} proved that $\CH^i_j(G)$ is zero if $j< i$ or if $j> g+i$, and that, in general, $\CH^i_j(G)$ is non-zero for $i\leq j \leq 2i$. Conjecturally $\CH^i_j(G)$ should be zero if $j> 2i$, this is part of the (still open) Bloch-Beilinson-Murre conjecture.

In our general setting one can hope to show similar vanishings of some of the eigenspaces $\CH^i_j(G)$ following the methods of Sugiyama \cite{Sug} for semiabelian varieties.
\end{enumerate}
\end{remark}

\section{Proof of the main theorem}\label{sect:proof}
\subsection{The case over a perfect field}\label{subsect:field}
In this section let $S=\Spec(k)$ be the spectrum of a perfect field. Recall that in this case $\pi_0(G/k)$ is a group scheme by Lemma \ref{lemma:connected_comp} so we do not have to consider motives of algebraic spaces in this section. We show Theorem \ref{MainThm} for all commutative group schemes of finite type $G$ over $k$. This is essentially the main result of \cite{AEH}, with exception that in loc. cit. the authors work in the effective category of motives with transfers and in this paper we primarily work in the stable category of motives without transfers. The point is then to compare the two approaches. 
%We use the notations and results from Appendix~\ref{app_transfers}. 

\begin{proposition}\label{cor:field_case} Let $G$ be a smooth commutative group scheme over $k$. 
Then $\Mof(G)$ is odd of Kimura dimension $\kd(G/k)$ and the morphism $\psi_{G/k}$ is an isomorphism.
\end{proposition}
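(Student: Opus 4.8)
The plan is to reduce Proposition~\ref{cor:field_case} to the main theorem of \cite{AEH}, which establishes the analogous statement \emph{with transfers} in the effective category over a perfect field. The strategy has two distinct parts: first, transport the statement of \cite{AEH} from the effective category with transfers $\DMeff(k)$ to the stable category with transfers $\DM(k)$; and second, transport it from motives with transfers to motives without transfers using the comparison functor $L\gamma^*:\DA(k)\to\DM(k)$ developed in Section~\ref{sect2} and Appendix~\ref{app_transfers}.

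First I would recall that \cite{AEH} produces, over the perfect field $k$, an isomorphism $\psi^\tr_{G/k}$ in the effective category with transfers together with the oddness and Kimura-dimension statements for the effective $1$-motive $\Mof^\eff(G/k)^\tr$. Applying the infinite suspension functor $L\Sigma^\infty_\tr:\DMeff(k)\to\DM(k)$, which is monoidal and sends $\Sym^n$ to $\Sym^n$, I obtain the corresponding isomorphism and finiteness statements in $\DM(k)$ for $\Mof(G/k)^\tr$. Here I must check that the morphism $L\Sigma^\infty_\tr\psi^{\eff,\tr}_{G/k}$ agrees with the stable morphism $\psi^\tr_{G/k}$ built from $\alpha^\tr_{G/k}$ in Definition~\ref{M1 with transfers}; this is a formal compatibility, since both are the canonical coalgebra extension of $\alpha^\tr_{G/k}$ and suspension commutes with the diagonal maps, symmetrisation and tensor products used in Definition~\ref{def:main_def}.

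Next I would invoke Proposition~\ref{prop: with without} and Proposition~\ref{prop comp excellent}. Proposition~\ref{prop comp excellent} states precisely that $L\gamma^*$ carries $\alpha_{G/k}$ to $\alpha^\tr_{G/k}$, and hence (by the functoriality of the formal construction in Definition~\ref{def:main_def} together with the monoidality of $L\gamma^*$) carries $\psi_{G/k}$ to $\psi^\tr_{G/k}$, modulo the canonical identifications $L\gamma^*\Mof(G/k)\cong\Mof(G/k)^\tr$ and $L\gamma^*M_k(G)\cong M_k(G)^\tr$ of Proposition~\ref{prop: with without} and Lemma~\ref{lemma_transfers}. Since $L\gamma^*:\DA(k)\to\DM(k)$ is an equivalence of categories over a perfect field (this is where perfectness enters, via the comparison results cited in Appendix~\ref{app_transfers}), it is conservative and monoidal, so $\psi_{G/k}$ is an isomorphism if and only if $\psi^\tr_{G/k}$ is, and the oddness and Kimura-dimension properties of $\Mof(G/k)$ follow from those of $\Mof(G/k)^\tr$ because these are preserved and reflected by a monoidal equivalence.

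The main obstacle is the second part: ensuring that the comparison between motives with and without transfers over the field $k$ is genuinely an equivalence identifying the two morphisms $\psi_{G/k}$ and $\psi^\tr_{G/k}$. The delicate points are the identification $L\gamma^*\Mof(G/k)\cong\Mof(G/k)^\tr$ of Proposition~\ref{prop: with without}, which rests on the $\qfh$-descent of $\ul{G/S}_\Q$ (Appendix~\ref{app_qfh}) and the cofibrant resolution of Appendix~\ref{app_resolution}, and the verification that $L\gamma^*$ respects the full coalgebra structure, not merely the underlying objects. By contrast, the first part (suspension) is routine, and the residual field-theoretic inputs (oddness, the value of the Kimura dimension) are simply imported from \cite{AEH}.
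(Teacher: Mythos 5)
Your proposal is correct and follows essentially the same route as the paper's proof: reduce from $\DA(k)$ to $\DM(k)$ using the equivalence $L\gamma^*$ (Theorem \ref{comp_transfers}) together with the identifications of Propositions \ref{prop: with without} and \ref{prop comp excellent}, then reduce from stable to effective motives with transfers via the monoidality of $L\Sigma^\infty_\tr$, and finally quote \cite[Theorem 7.4.6]{AEH}. The only step you elide is that the morphism of \cite{AEH} is a priori defined by a different construction (cf. Remark \ref{Harrer}), so identifying it with $\psi^{\eff,\tr}_{G/k}$ requires the uniqueness clause of Theorem \ref{G with transfers}; note also that perfectness of $k$ is what makes \cite{AEH} applicable, whereas the equivalence $L\gamma^*$ holds for any excellent geometrically unibranch base.
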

\begin{proof}

By Theorem \ref{comp_transfers} and Proposition~\ref{prop: with without} 
it suffices to show that
\[ L\gamma^*\Sym^n(\Mof(G))=\Sym^*(L\gamma^n\Mof(G))=\Sym^n(\Mof(G)^\tr)\]
vanishes for $n>\kd(G)$ and that
\[ L\gamma^*\psi_{G/k}=\psi_{G/k}^\tr\]
 is an isomorphism. 

As $L\Sigma^\infty_{\tr}$ is monoidal (see \ref{lemma_transfers}(\ref{transfers_kunn})) and commutes with direct sums, we have an isomorphism 
\[ \bigoplus_{n=0}^k\Sym^n M_1(G)^\tr\simeq L\Sigma^\infty_\tr\left(\bigoplus_{n=0}^k\Sym^n M_1(G)^{\eff,\tr}\right) .\]
 Modulo this isomorphism, $\psi_G^\tr$ agrees with the morphism deduced by applying $L\Sigma^\infty_\tr$ to
 \[\psi^{\eff,\tr}_{G/k} : M^{\eff}_k(G)^\tr \longrightarrow \left(\bigoplus_{n\geq 0} \Sym^n\Mof^{\eff}(G/k)^\tr\right) \otimes M_k^{\eff}(\pi_0(G/k))^\tr\ .\]
 
 %Recall the motive $\Mof(G/k)$ and the morphism $\alpha_G$ (Definition \ref{def:main_defM1}). Recall also their version with transfers $\Mof(G/k)^\tr$ (Definition \ref{M1 with transfers}) and $\alpha_G^\tr$ (Definition \ref{M1 with transfers}). By Proposition \ref{prop: with without} and Proposition \ref{prop comp excellent} they correspond to each other under the derived adding transfers functor $L\gamma^*$.

 On the other hand, by the uniqueness part in Theorem \ref{G with transfers}, the morphism $\psi^{\eff,\tr}_{G/k}$ is exactly the morphism considered in \cite[\S 7.4]{AEH} (although with different notation). We are now able to apply \cite[Theorem 7.4.6]{AEH}, which concludes the proof.
 %the motive $\Mof(G/k)^\tr$ coincides with the motive denoted
%$\Mof(G)$ in \cite[Definition 2.1.5]{AEH} and $\alpha^\tr_G$ coincides with the morphism denoted $\alpha_{G}$
%in loc. cit. Hence, 
\end{proof}

\subsection{The general case}

We return to an arbitrary base scheme $S$.

\begin{proof}[Proof of Theorem \ref{MainThm}.]
\begin{enumerate}
\item We consider the Kimura dimension $\kd(G/S)$ of $\Mof(G/S)$. We claim that
\[ \Sym^n(\Mof(G/S))=0\]
for $n>\kd(G/S)$. By Lemma \ref{vanishing}, we can test this after pullback
to all geometric points $i_{\bar{s}}:\bar{s}\to S$. The functor $i_{\bar{s}}^*$ commutes
with tensor product and hence with $\Sym^n$. By Proposition \ref{pullback_group} we have $i_{\bar{s}}^*\Mof(G/S)=\Mof(G_{\bar{s}})$. By definition 
$\kd(G/S)\geq \kd(G_{\bar{s}})$. Hence the vanishing holds by Proposition~\ref{cor:field_case}.

 \item By Lemma \ref{lemma:connected_reduction}(\ref{M1linear}) the morphism $\alpha_{G/S}$ is natural in $G \in \CGS_S$. This implies naturality for the morphism $\psi_{G/S}$. Naturality implies that the morphism $\psi_{G/S}$ is a morphism of Hopf algebras by the same argument as in the absolute case; see \cite[Proposition 3.2.9, Theorem 7.4.6]{AEH}.

By Proposition \ref{pullback_group} the morphism $\alpha_{G/S}$ commutes with base change. As the pullback on motives is a monoidal functor (Lemma \ref{lemma_motives} (\ref{pullback_tensor})) the morphism $\vp_{G/S}$ also commutes with base change. Since the formation of $\pi_0(G/S)$ commutes with base change, so does $\psi_{G/S}$.

 We turn to the claim that $\psi_{G/S}$ is an isomorphism.
By Lemma~\ref{vanishing}, it suffices to check the assertion after
pullback via $i_{\bar{s}}:\bar{s}\to S$ for all geometric points $\bar{s}$ of $S$. On the other hand
$i_{\bar{s}}^*\psi_{G/S}=\psi_{G_{\bar{s}}/\bar{s}}$, as the map $\psi_{G/S}$ commutes with base change. This is an isomorphism
by Proposition \ref{cor:field_case}.

\item If $\pi_0(G)=0$, then $\Mof(G)$ is geometric because it is a direct summand 
of a geometric object by part \ref{main_iso}. This also implies the general case because $\Mof(G)=\Mof(G^0)$. Finally, $M(\pi_0(G))$ is geometric because it is a direct factor of $M_S(G)$ by part \ref{main_iso}.

\item By Lemma \ref{lemma:connected_reduction}(\ref{M1linear}), we have $\Mof([m])=m \cdot \id$ for all integers $m$, so that $\Sym^n\Mof([m])=m^n\id$ for all $n\in \N$. To conclude, note that $m$ is equal to $1$ modulo $o(\pi_0(G/S))$, the multiplication by $m$ is the identity on the space $\pi_0(G/S)$. 

\end{enumerate}
\end{proof}

\section{Realizations} \label{section:real}
We want to study the image of our decomposition under realization
functors. We use the existence of realizations
functors compatible with the six functor formalism.
We have decided against axiomatizing the statement but
rather treat the explicit cases of Betti and $\ell$-adic realization.

\subsection{Betti realization}
In this section, we assume all schemes are of finite type over $\C$. For such a scheme $S$ we denote by
$S^\an$ the associated complex analytic space, equipped with its natural topology. Let $D(S^{\an},\Q)$ be the derived category of sheaves of $\Q$-vector spaces on $S^\an$ and $D^b_c(S^\an,\Q)$ be the subcategory of bounded complexes with constructible cohomology.
By \cite{Ayoubbetti} (see also \cite[17.1.7.6]{CD} for an elaboration in terms of ring spectra)
there is a system of covariant functors
\[ \Betti: \DA(S)\to D(S^\an,\Q)\]
compatible with the six functor formalism on both sides (note that some commutativity morphisms are shown to be isomorphisms only when applied to constructible motives). It maps
the Tate object $\Q(j)$ to $\Q$. Moreover, as the relative Betti homology of a smooth $S$-scheme lies in $D^b_c(S^\an,\Q)$, constructible motives are sent to $D^b_c(S,\Q)$.

\begin{proposition}\label{Betti}Let $\pi:G\to S$ be a smooth commutative group scheme of relative dimension $d$ with
connected fibres. Then:
\begin{enumerate}
\item $\Betti (M_S(G))=\pi^{\an}_!(\pi^{\an})^!\Q_S=\pi^{\an}_!\Q_G[2d]$
\item $\Betti (\Mof(G)[-1])= R^{2d-1}\pi^{\an}_!\Q_G$ with fibre in $s\in S^\an$
given by $H_1(G_s,\Q)$. We set $\Hh_1(G/S,\Q)=\Betti (\Mof(G)[-1])$.
\item \label{dec_Betti} $\pi^{\an}_!\Q_G(d)[2d]=\bigoplus_{i=0}^{\kd(G/S)}(\bigwedge^i\Hh_1(G/S))[i]$.
\end{enumerate}
\end{proposition}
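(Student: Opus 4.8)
The plan is to transport the motivic decomposition of Theorem~\ref{MainThm} through the monoidal, $\Q$-linear realization functor $\Betti$. Since $\pi$ has connected fibres, $\pi_0(G/S)$ is trivial and $M(\pi_0(G/S))=\one_S$, so Theorem~\ref{MainThm} reads $M_S(G)\isocan\bigoplus_{n=0}^{\kd(G/S)}\Sym^n\Mof(G)$. First I would prove (1): writing $\pi$ for the structure map, relative purity for the smooth morphism $\pi$ of relative dimension $d$ gives $\pi^!=\pi^*(d)[2d]$ and $\pi_\#=\pi_!(d)[2d]$, whence $M_S(G)=\pi_\#\one_G=\pi_!\pi^!\one_S$. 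As $\Betti$ is compatible with the six operations and sends $\one_S$ to $\Q_S$, this yields $\Betti(M_S(G))=\pi^{\an}_!(\pi^{\an})^!\Q_S$; and since $\pi^{\an}$ is a topological submersion with canonically oriented fibres of real dimension $2d$, one has $(\pi^{\an})^!\Q_S=\Q_G[2d]$.

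The crux is the computation of $\Betti\Mof(G)$, and I would first show it is concentrated in cohomological degree $-1$, of the form $\Hh_1[1]$ for a single constructible sheaf $\Hh_1$. Applying $i_s^*$ for $s$ a complex point of $S$, compatibility of $\Betti$ with $*$-pullback and Proposition~\ref{pullback_group} give $i_s^*\Betti\Mof(G)=\Betti\Mof(G_s)$, and over a point the field case (Proposition~\ref{cor:field_case}) identifies $\Betti\Mof(G_s)$ with the eigen-one summand of $\Betti M(G_s)$, namely $H_1(G_s,\Q)[1]$. Since $\mathcal{H}^m(i_s^*K)=(\mathcal{H}^mK)_s$, the vanishing of all stalks outside degree $-1$ forces $\Betti\Mof(G)=\Hh_1[1]$ with $\Hh_1$ having stalk $H_1(G_s,\Q)$ at $s$. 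This gives (2) once $\Hh_1$ is identified with $R^{2d-1}\pi^{\an}_!\Q_G$: by proper base change the stalk of $R^{2d-1}\pi^{\an}_!\Q_G$ is $H^{2d-1}_c(G_s,\Q)$, which is $H_1(G_s,\Q)$ by Poincar\'e duality on the smooth $2d$-dimensional manifold $G_s^{\an}$; alternatively the identification drops out of (3) by comparing $\mathcal{H}^{-1}$.

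For (3), I would apply $\Betti$ to the decomposition. Because $\Betti$ is symmetric monoidal and $\Q$-linear it commutes with the symmetric-power projectors $\tfrac1{n!}\sum_{\sigma\in\Sigma_n}\sigma$, so $\Betti\Sym^n\Mof(G)=\Sym^n(\Betti\Mof(G))=\Sym^n(\Hh_1[1])$. The key input is the d\'ecalage identity $\Sym^n(L[1])=(\bigwedge^nL)[n]$: shifting by the odd degree $[1]$ twists the $\Sigma_n$-action on $L^{\otimes n}[n]$ by the sign character via the Koszul rule, and the invariants of $L^{\otimes n}\otimes\mathrm{sgn}$ are $\bigwedge^nL$. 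Summing over $0\leq n\leq\kd(G/S)$ and using (1) then gives $\pi^{\an}_!\Q_G(d)[2d]=\bigoplus_{i=0}^{\kd(G/S)}(\bigwedge^i\Hh_1)[i]$, the Tate twist $(d)$ being trivial on the Betti side.

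The main obstacle is precisely this crux: justifying that $\Betti$ commutes with $\Sym^n$ (which uses rational coefficients together with monoidality) and the d\'ecalage identity with correct Koszul signs, together with the fibrewise argument that pins $\Betti\Mof(G)$ in degree $-1$. Once these are in place, part (1) and the Poincar\'e-duality computation of stalks are formal.
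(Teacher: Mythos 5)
Your proposal is correct and takes essentially the same route as the paper: relative purity plus compatibility of $\Betti$ with the six operations for (1), reduction to fibres for (2) (where the paper simply cites the Hopf-algebra computation of \cite[Proposition 7.2.2]{AEH}, which your eigenvalue argument reproduces), and application of the monoidal, $\Q$-linear functor $\Betti$ to the decomposition of Theorem~\ref{MainThm} together with $\Sym^n(L[1])\cong(\bigwedge^n L)[n]$ for (3). The only step in the paper's proof you omit is the preliminary observation that $\pi$ is separated (by \cite[Expos\'e VI$_B$ Corollaire 5.5]{SGA3}), which is needed for the exceptional operations $\pi_!$ and $\pi^!$ to be well-defined.
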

\begin{proof}
First notice that $\pi$ is separated by \cite[Expos\'e VI$_B$ Corollaire 5.5]{SGA3} so that the exceptional operations $\pi_!$ and $\pi^!$ are well-defined. We have
\[ M_S(G)=\pi_\#\Q_G=\pi_!\pi^!\Q_S\]
because $\pi$ is smooth. The first assertion then follows from the same statement in $\DA(S)$ and the compatibility theorem \cite[Theoreme 3.19]{Ayoubbetti}.

By our main Theorem \ref{MainThm}, $\Betti\Mof(G)$ is a direct factor of $\Betti(M_S(G))\simeq \pi_!\pi^!\Q_S$. Its fibre over $s\in S^\an$ is given by
$\Betti(\Mof(G_s))$.  This was computed in \cite[Proposition 7.2.2]{AEH} via an Hopf algebra argument which also applies in our setting with the difference that in loc. cit. the realization functor was assumed to be {\em contravariant}, i.e., $M(G_s)$ was mapped to $\pi_*\pi^*\Q$. In that language, the
realization of $\Mof(G_s)$ was concentrated in degree $1$ and equal to
$H^1(G_s,\Q)$ as a direct factor of $H^*(G_s,\Q)$. Hence in the present setting, the realization is concentrated
in degree $-1$ and equal to $H_1(G_s,\Q)$. 

This shows that indeed $\Betti(\Mof(G)[-1])\simeq R^{2d-1}\pi^\an_!\Q_G$. 

The last statement follows 
by functoriality of $\Betti$.
\end{proof}
\begin{remark}
\begin{enumerate}
\item $\Hh_1(G/S)$ has an explicit description as 
\[ \Hh_1(G/S)=\Ker\left(\ul{\Lie(G^\an)}\xrightarrow{\exp}\ul{G^\an}\right)\tensor\Q\ .\]
A similar description holds integrally. This can be shown directly in the setting of constructible sheaves or
from an explicit computation of $\Betti(\Mof(G))$, see the upcoming
thesis \cite{thesis-simon} of the third author.
\item The assumption on the connectness of fibres is not needed, but simplifies
notation for the result. 
\end{enumerate}
\end{remark}

\begin{remark}
\label{remark:real_common}
\begin{enumerate}
\item The statement in Proposition \ref{Betti} (\ref{dec_Betti}) means that our canonical motivic decomposition is a decomposition
into relative K\"unneth components with respect to the conjectural standard
motivic $t$-structure.
\item If $G$ has constant abelian rank and torus rank over a regular scheme $S$, then
$\Hh_1(G/S,\Q)$ is a local system and thus (up to a shift) a
perverse sheaf.
We do not know if this is true in general and expect it to be false; however see the following example.
\end{enumerate}

\end{remark}
\begin{example}\label{example:real}
Let $S$ be regular of dimension $1$ and $G/S$ be a degenerating family
of elliptic curves with multiplicative or additive reduction. In both
cases $\Betti(\Mof(G/S))$ is a perverse sheaf.
\end{example}

\subsection{$\ell$-adic realization}
In this section, we fix a prime $\ell$ and assume that all schemes are $\Z[\frac{1}{\ell}]$-schemes.   
For such a scheme $S$, let $D_c(S,\Ql)$ be subcategory of complexes with constructible cohomology in the
derived category of $\Ql$-sheaves $S$ in the sense of Ekedahl \cite{Eke}.
By \cite[Section 9]{Ayoubet} there are covariant functors 
\[ \ladic: \DA_c(S,\Q)\to D_c(S,\Ql)\]
compatible with the six functor formalism on both sides. It maps
the Tate object $\Q(j)$ to $\Ql(j)$.

For a smooth group scheme $G/S$ with connected fibres, we can deduce, by the same argument as in Proposition \ref{Betti},
an analoguous decomposition for $\ladic(M_S(G))$. The corresponding object $\Hh_1(G/S, \Ql)$ has fibre over
$s\in S$ given by the rational Tate module $V_\ell(G_s)$. The analogues of
Remark \ref{remark:real_common} also hold in this setting.

\begin{remark}
The $\ell$-adic sheaf $\Hh_1(G/S,\Ql)$ is given by the system
of constructible torsion sheaves $G[\ell^n]$. This also holds integrally. Again this can be seen either directly in $\ell$-adic cohomology or via a computation of the realization of $\Mof(G)$;
see \cite{thesis-simon}.
\end{remark}

\subsection{Hodge realization}

Let $S$ be separated of finite type over $\C$. We expect the existence of
Hodge realization functors
\[ \Hodge:\DA_c(S)\to D^b(\MHM(S))\]
(where $\MHM(S)$ is Saito's category of mixed Hodge modules over $S$)
compatible with the six functor formalism and such the forgetful functor
to the underlying derived category of sheaves is equal to $\Betti$.
This would yield a refinement of Proposition \ref{Betti}. Note that
$\Hodge(\Mof(G/S))$ is a mixed Hodge module (up to shift) if and only if
$\Betti(\Mof(G/S))$ is a perverse sheaf (up to shift).

Ivorra has constructed a functor $\Hodge$ for $S/\C$ smooth quasi-projective \cite{Ivorra_Hodge}, but the compatibility
with the six functors (in particular the tensor functor!) is still open. Hence we get a partial result. There is also upcoming work of Drew \cite{drew-thesis,drew-oberwolfach}
defining a Hodge realization $\Hodge'$
with values in some triangulated category $D_\mathrm{Hdg}(S)$ compatible with the six functor formalism. It is known that $D_\mathrm{Hdg}(\C)\simeq D^b(\mathrm{MHS})$ (so that $\Hodge'$ puts in family the Hodge structures on the (co)homology of fibres) but the comparison with mixed Hodge modules over a more general $S$ is still not understood.

\section{Algebraic group spaces}\label{sect:alggroups}

We extend our main result from commutative group schemes to
commutative algebraic group spaces. First we discuss generalities and some examples.

Recall (see e.g. \cite[Chapter 1]{LMB}) that an algebraic space $Y$ is an 
\'etale sheaf of sets over the site $\Sch/S$ with the \'etale topology induced by a scheme $X$ and an \'etale equivalence relation $R$.
This means that $Y$ is the cokernel of the diagram of \'etale sheaves of sets
\[ R\rightrightarrows  X\ .\]
It is {\em smooth} (resp. {\em \'etale}) if $X$ is smooth (resp. \'etale) over $S$. An algebraic group space is defined to be a group object in the category of algebraic spaces (which does not mean that we can find a presentation as above with $X$ a group and $R$ a subgroup).

Algebraic group spaces are closer to schemes than general algebraic spaces.

\begin{prop}{\cite[Lemma 4.2]{Artin_FM1}}\label{prop:alg_space_field} If $S$ is the spectrum of a field and $G/S$ an algebraic group space of finite type over $S$, then $G$ is a scheme.
\end{prop}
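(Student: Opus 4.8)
The plan is to reduce the statement to the assertion that every point of $G$ has an open neighbourhood which is a scheme: an algebraic space admitting an open covering by subschemes is itself a scheme, since one may reconstruct it by gluing these schemes along the open subschemes in which they pairwise intersect. So I would work with the underlying topological space $|G|$ and try to cover it by open subschemes. The starting input is that $G$, being a quasi-separated algebraic space of finite type over the field $k$, contains a dense open subscheme $U$; this is a standard fact obtained by analysing the local structure at the generic points of $|G|$.

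The engine of the proof is the homogeneity coming from the group law: for any field extension $k'/k$ and any point $g\in G(k')$, left translation $t_g$ is an automorphism of $G_{k'}=G\times_k\Spec(k')$ carrying the open subscheme $U_{k'}$ to the isomorphic open subscheme $gU_{k'}$. First I would run this over an algebraic closure $\bar k$. For every closed point $x$ of $G_{\bar k}$ one may choose any $\bar k$-rational point $y$ of the nonempty scheme $U_{\bar k}$ and set $g=xy^{-1}$; then $x=gy\in gU_{\bar k}\cong U_{\bar k}$, so $x$ has a scheme neighbourhood. Since $G_{\bar k}$ is Jacobson, an open subset containing all closed points is everything, so the translates $gU_{\bar k}$ cover $G_{\bar k}$ by open subschemes and $G_{\bar k}$ is a scheme.

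It remains to descend this conclusion from $\bar k$ to $k$, and this is where I expect the real difficulty to lie, since being a scheme does not descend along arbitrary base field extensions for general algebraic spaces. Here I would exploit the extra structure available: $G/k$ is separated, because its diagonal is the pullback of the identity section under $(x,y)\mapsto x^{-1}y$ and the identity section is a closed immersion. For a separated algebraic space the question of being a scheme is local and can be rephrased in terms of the existence of (quasi-)affine open neighbourhoods, and affineness of morphisms descends along faithfully flat quasi-compact base change; combining this with the scheme structure on $G_{\bar k}$ produced above should yield affine open neighbourhoods of the points of $G$ itself. An alternative, avoiding a general descent statement, would be to work over the residue field $\kappa(x)$ of a given point $x\in|G|$: the tautological $\kappa(x)$-point of $G_{\kappa(x)}$ admits a scheme neighbourhood by the homogeneity argument above, and one transfers this down along the faithfully flat projection $G_{\kappa(x)}\to G$. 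Either way, the heart of the matter, and the step I would spend the most care on, is the faithfully flat descent of the scheme property for the separated group space $G$.
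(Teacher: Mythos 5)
The paper gives no argument of its own here---it simply cites \cite[Lemma 4.2]{Artin_FM1}---so your proposal must stand as a self-contained proof, and as such it has a genuine gap. The first half is correct and is the standard opening of all proofs of this result: a quasi-separated algebraic space of finite type over $k$ contains a dense open subscheme $U$, and over $\bar{k}$ the translates $gU_{\bar{k}}$ by $\bar{k}$-rational points cover all closed points, so $G_{\bar{k}}$ is a scheme. (Quasi-separatedness, which you assume, is genuinely needed and is built into Artin's and Knutson's definition of an algebraic space; without it the statement is false, as the group space $\A^1_k/\Z$, the quotient by the \'etale equivalence relation of translation by integers, is of finite type over $k$ but is not a scheme.)

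The gap is the descent from $\bar{k}$ to $k$, which you rightly single out as the crux but do not fill, and which the tools you name cannot fill by themselves. Fpqc descent of (quasi-)affineness applies only after you have produced an open subspace $V\subseteq G$ \emph{defined over $k$} with $V_{\bar{k}}$ (quasi-)affine; but the affine opens of $G_{\bar{k}}$ coming from your translation argument are translates by points of $G(\bar{k})$ that are not defined over $k$, hence are not Galois-stable and not of the form $V_{\bar{k}}$. This is not a technical nuisance: the principle you would need---that a separated algebraic space over $k$ whose base change to $\bar{k}$ is a scheme is itself a scheme---is \emph{false}. The well-known twist of Hironaka's smooth proper threefold, carrying a free $\Z/2$-action for which some orbit lies in no affine open, by the action of $\mathrm{Gal}(\C/\R)$ yields a smooth proper (hence separated) algebraic space over $\R$ that is not a scheme, although its base change to $\C$ is a scheme; the obstruction is precisely that a Galois orbit need not lie in an affine open. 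So the group structure must be invoked a second time, inside the descent step itself: for instance, one spreads out to get that $G_{k'}$ is a scheme for a finite extension $k'/k$, uses the theorem of Chow--Raynaud that a group scheme of finite type over a field is quasi-projective (so every finite Galois orbit lies in an affine open), and intersects the finitely many Galois translates of such an affine open (again affine, by separatedness) to obtain a Galois-stable affine open, to which descent of affineness finally applies; alternatively one analyses, as Artin does, the schematic locus of $G$ directly over $k$. None of this is in your sketch. Your alternative route through $\kappa(x)$ has a second gap of the same kind: the dense open subscheme $U_{\kappa(x)}\subseteq G_{\kappa(x)}$ may have no $\kappa(x)$-rational point at all (think of $\kappa(x)$ finite, or real), so there is no point $y$ to translate by, and the homogeneity argument does not produce a scheme neighbourhood of the tautological point over $x$.
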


By a spreading-out argument, one deduces the following seemingly stronger result.

\begin{corollary}\label{coro:stratification}
Let $S$ be a noetherian scheme and $G/S$ an algebraic group space of finite type. Then exists a stratification of $S$ such that $G$ restricted to any stratum is a scheme.
\end{corollary}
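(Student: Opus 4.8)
\textbf{Proof plan for Corollary \ref{coro:stratification}.}
The plan is to reduce the statement over a general noetherian base to the known case over a field (Proposition \ref{prop:alg_space_field}) by a standard noetherian-induction together with a spreading-out argument. First I would set up the induction: since $S$ is noetherian, it suffices to produce a \emph{nonempty open} subscheme $U\subseteq S$ over which $G_U/U$ is a scheme; then one applies this to each irreducible component of the reduced closed complement $S\smallsetminus U$ (equipped with its reduced structure) and uses noetherian induction on the underlying topological space, which is guaranteed to terminate because $S$ is noetherian. Assembling the resulting locally closed pieces gives the desired stratification.

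Next I would construct the open set $U$. Working near a chosen generic point $\eta$ of an irreducible component of $S$, the field case (Proposition \ref{prop:alg_space_field}) tells us that the fibre $G_\eta$ is a scheme over the residue field $\kappa(\eta)$. The heart of the argument is then to spread this scheme structure out to an open neighbourhood of $\eta$. Concretely, the property of an algebraic space being a scheme is, for a morphism locally of finite presentation, controlled by the diagonal and by the existence of an open immersion from a scheme that is also surjective. Since $G/S$ is of finite type over a noetherian base, all the relevant data (the \'etale presentation $R\rightrightarrows X$ defining $G$, the equivalence relation, and the étale morphisms) are of finite presentation, so by the standard limit formalism of \cite[\S 8--9]{EGAIV_3} any property that holds at the generic point $\eta$ and is constructible---here, ``being a scheme''---holds on an open neighbourhood of $\eta$. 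I would invoke this principle to obtain a nonempty open $U\ni\eta$ with $G_U/U$ a scheme.

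The main obstacle is making precise the spreading-out of the \emph{scheme} property for algebraic spaces, since ``being a scheme'' is not literally one of the standard constructible properties tabulated in \cite{EGAIV_3} for morphisms of schemes. The cleanest way to handle this is to note that the algebraic space $G$ admits an étale cover by an affine scheme $X\to G$ which is surjective, and that $G$ is a scheme over an open $U$ precisely when $X_U\to G_U$ admits an open subscheme that is still surjective and separated over $U$; equivalently one checks that the étale equivalence relation $R\rightrightarrows X$ becomes, after restriction to $U$, an effective \emph{open} equivalence relation whose quotient is representable. The representability of this quotient over the generic point, together with the finite presentation of $R$ and $X$ over $S$, lets one transport representability to a neighbourhood by descending a separatedness and quasi-compactness condition. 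I expect that once the scheme property at $\eta$ is rephrased as such a finitely presented condition on the diagonal of $G/S$ (which is quasi-compact and, over a field, a closed immersion), the spreading-out is a direct application of \cite[Th\'eor\`eme 8.10.5, Corollaire 9.7.9]{EGAIV_3}, and the rest of the argument is the routine noetherian induction sketched above.
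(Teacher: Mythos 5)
Your overall strategy --- noetherian induction on $S$ together with spreading out from a generic fibre, which is a scheme by Proposition \ref{prop:alg_space_field} --- is exactly the argument the paper intends (the paper offers nothing beyond the phrase ``by a spreading-out argument''). The genuine gap is in how you propose to carry out the spreading-out step, which is where all the content lies. Both of your reformulations of the scheme property fail. The condition ``$X_U\to G_U$ admits an open subscheme that is still surjective and separated over $U$'' is vacuous: when $X$ is affine, $V=X_U$ itself satisfies it, yet $G_U$ need not be a scheme; if you instead mean that some open $V\subseteq X_U$ maps to $G_U$ by a surjective \emph{open immersion}, then such a map is automatically an isomorphism, so the condition is merely a restatement of representability rather than a finitely presented condition one can spread out. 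Your second reformulation (``the equivalence relation becomes effective with representable quotient'') is circular for the same reason. The fallback via the diagonal is also insufficient: spreading out ``the diagonal is a closed immersion'' only yields that $G_U$ is separated, and separated algebraic spaces need not be schemes (Hironaka-type quotient examples); representability of \emph{separated} group spaces is known only over low-dimensional bases, and no such theorem is invoked here. Finally, \cite[Th\'eor\`eme 8.10.5, Corollaire 9.7.9]{EGAIV_3} concern morphisms of schemes, so they cannot be applied verbatim to the algebraic space $G/S$; one needs their analogues for algebraic spaces.

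Here is what does work, and it descends a cover rather than a condition on the diagonal. First replace $S$ by $S_{\red}$ (harmless, since the strata may be taken reduced); this also repairs a point you glossed over, namely that for non-reduced $S$ the limit of the open neighbourhoods of a generic point $\eta$ is $\Spec\,\mathcal{O}_{S,\eta}$, an Artinian local ring, and not $\Spec\,\kappa(\eta)$, so the field case does not directly describe the limit. After reducing, restrict to the cofiltered system of \emph{affine} open neighbourhoods $U\ni\eta$, so that the transition morphisms $G_{U'}\to G_U$ are affine and $G_\eta=\lim_U G_U$. Since $G_\eta$ is a quasi-compact scheme, it has a finite cover by affine open subschemes; each affine piece descends to an affine scheme over some $U$, the open immersions into $G_\eta$ descend to open immersions into $G_U$, and after shrinking $U$ these opens cover $G_U$, because the complement is a closed subspace with empty fibre over $\eta$ whose image in $U$ is constructible. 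Then $G_U$ has a finite open cover by schemes, hence is a scheme, and your noetherian induction finishes the proof. This is precisely the limit lemma for algebraic spaces (Stacks Project, chapter on limits of algebraic spaces); note that quasi-separatedness of $G$, implicit in the paper's conventions and already needed for Proposition \ref{prop:alg_space_field}, is used throughout.
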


Commutative algebraic group spaces that are not group schemes appear naturally in algebraic geometry. We have already seen the example of the \'etale algebraic space $\pi_0(G/S)$ in Lemma \ref{lemma:connected_comp}, which was an instance of a very general result of Artin of representability of quotients \cite[Corollaire 10.4]{LMB}. In the same vein, we have the following useful fact.
\begin{lemma}{\cite[Proposition 4.6]{FK}}\label{lemma:etale_alg_spaces}
Let $S$ be a scheme. The constructible \'etale sheaves of sets (resp. of abelian groups) are exactly the \'etale algebraic spaces (resp. \'etale algebraic group spaces) of finite type over $S$.
\end{lemma}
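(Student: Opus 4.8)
The plan is to exhibit mutually inverse assignments between the two categories: send an \'etale algebraic space $X/S$ of finite type to the sheaf of sets $\ul{X}$ it represents, and send a constructible sheaf to an algebraic space representing it. Both notions are local for the \'etale topology on $S$ and are compatible with filtered limits of base schemes, so by a standard spreading-out argument I would first reduce to the case that $S$ is noetherian, where noetherian induction on closed subsets of $S$ becomes available.

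For the implication ``\'etale algebraic space $\Rightarrow$ constructible'', let $\pi:X\to S$ be \'etale of finite type, hence quasi-finite, so that each geometric fibre is finite. The function $s\mapsto |X_{\bar s}|$ is constructible by \cite[Corollaire 9.7.9]{EGAIV_3} (exactly as in the discussion preceding Definition \ref{orderpi}); combining this with Corollary \ref{coro:stratification} I would choose a finite stratification of $S$ over whose strata $X$ is a separated scheme with constant fibre cardinality. Over such a stratum a quasi-finite, separated, \'etale morphism of constant degree is finite \'etale, hence $\ul{X}$ restricts to a finite locally constant sheaf there; this is precisely constructibility.

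The content is in the converse. Given a constructible $F$, choose a dense open $j:U\hookrightarrow S$ on which $F$ is finite locally constant, with closed complement $i:Z\hookrightarrow S$. By the \'etale analogue of Galois theory, a finite locally constant sheaf is represented by a finite \'etale scheme $X_U\to U$; since $j$ is an open immersion, $X_U\to S$ is then a separated quasi-finite \'etale scheme. The restriction $i^*F$ is again constructible on $Z$, so by the inductive hypothesis it is an \'etale algebraic space $X_Z/Z$ of finite type. Writing $F$ through the recollement data $(F|_U,\,i^*F,\,\phi)$ with gluing map $\phi: i^*F\to i^* j_*(F|_U)$, I would note that $i^*j_*\ul{X_U}$ is again constructible by the stability of constructible sheaves under the operations $j_*$ and $i^*$ (SGA 4, Exp.~IX), hence representable by the inductive hypothesis, so that $\phi$ becomes a morphism of \'etale algebraic spaces of finite type over $Z$. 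It then remains to check that the sheaf $F$ glued from $X_U$, $X_Z$ and $\phi$ is itself an \'etale algebraic space: I would produce a scheme $P$, \'etale and of finite type over $S$, together with an \'etale epimorphism $\ul{P}\twoheadrightarrow F$ whose relation $R=P\times_F P$ is a scheme, and conclude that $F=P/R$ is an algebraic space as the quotient of an \'etale equivalence relation $R\rightrightarrows P$ by \cite[Corollaire 10.4]{LMB}.

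The main obstacle is exactly this last step: spreading the data over the closed stratum $Z$ out into an \'etale neighbourhood in $S$ so as to build the global \'etale presentation $P\to S$. This is where genuine algebraic spaces, rather than schemes, are unavoidable: the object glued from quasi-finite \'etale schemes along a stratification is separated only in special cases, and by \cite[II, Theorem 6.15]{Knutson} it is a scheme precisely when it is separated. The group case follows by transporting the (co)multiplication of $F$, a morphism of sheaves, across this equivalence to make the representing space a group object, and conversely; the base case over a field is Proposition \ref{prop:alg_space_field}, which guarantees that the \'etale algebraic group spaces arising fibrewise are schemes.
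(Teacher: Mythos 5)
The first thing to note is that the paper does not prove this lemma at all: it is imported wholesale from \cite[Proposition 4.6]{FK}, so your proposal has to be judged against the classical argument (due to Artin, reproduced in Freitag--Kiehl and in Milne's book), whose overall strategy you have correctly identified --- noetherian induction, recollement along a dense open $U$ with closed complement $Z$, and Galois-theoretic representability of finite locally constant sheaves by finite \'etale schemes. The problem is that you stop exactly where that argument begins. The step you yourself call ``the main obstacle'' --- producing a scheme $P$, \'etale and of finite type over $S$, with an \'etale epimorphism $\ul{P}\twoheadrightarrow F$ whose relation $R=P\times_F P$ is a scheme --- is the entire content of the theorem: everything preceding it (the recollement triple, representability of $F|_U$, inductive representability of $i^*F$) is formal. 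Constructing $P$ requires a genuine argument, namely extending an \'etale atlas of the space representing $i^*F$ over $Z$ to an \'etale neighbourhood of $Z$ in $S$ and matching it against $X_U$; this is where henselization/limit arguments (or Artin approximation, depending on the presentation) enter, and it is not done here. A proposal that flags this step as the obstacle and does not carry it out is a plan, not a proof. Note also that your induction silently requires $i^*j_*\ul{X_U}$ to be constructible; for sheaves of sets this is itself a non-trivial finiteness statement about sections of finite \'etale covers over punctured henselian neighbourhoods, not a formal stability property, so it cannot simply be waved at SGA~4 without verifying the hypotheses in this generality.

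There are also gaps in the direction you treat as easy. Corollary \ref{coro:stratification} applies only to algebraic \emph{group} spaces: it is deduced from Proposition \ref{prop:alg_space_field} (Artin's theorem for group spaces over a field) by spreading out, so it cannot be invoked for the sheaves-of-sets half of the statement. What you need instead is the separate fact that a locally quasi-finite algebraic space of finite type over a field is a scheme --- true, but nowhere in the paper --- followed by a spreading-out of finiteness over a dense open of $S$. Moreover, you claim a stratification over whose strata $X$ is a \emph{separated} scheme; neither Corollary \ref{coro:stratification} nor generic scheme-ness provides separatedness, and you genuinely need it, since ``\'etale of constant fibre degree $\Rightarrow$ finite \'etale'' fails without it (take the disjoint union of the line with doubled origin and $\G_m$ over $\A^1$: \'etale, quasi-compact, constant geometric fibre degree $2$, not finite). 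Finally, ``a scheme precisely when it is separated'' misquotes \cite[II, Theorem 6.15]{Knutson}: separatedness is sufficient for a quasi-finite algebraic space of finite presentation over a scheme to be a scheme, not necessary. Given all this, the honest course in the context of the paper is exactly what the authors do: cite \cite[Proposition 4.6]{FK} rather than reprove it.
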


A fundamental source of examples are Picard functors of proper flat cohomologically flat morphisms \cite[8.3]{Neron}; they are not finite type, smooth or separated in general, but see \cite[8.4]{Neron}.

Finally, the Weil restriction along a finite flat morphism sends (smooth) algebraic group spaces to (smooth) algebraic group spaces (see \cite[7.6]{Neron} and \cite[Theorem 1.5]{Olsson_res}).

For the rest of this section, we adopt the following shorthand
\begin{definition}\label{defn:groupspace}A {\em group space} is a smooth commutative algebraic group space of finite type. 
\end{definition}

\begin{lemma}\label{lemma:alg_comp} Let $G$ be a group space. Then there is a short exact sequence
of group spaces
\[ 0\to G^0\to G\to \pi_0(G/S)\to 0\]
such that all fibres of $G^0$ are connected and $\pi_0(G/S)$ is 
an \'etale group space. The formation of this sequence commutes with base change.
\end{lemma}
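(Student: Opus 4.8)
The plan is to follow the proof of Lemma \ref{lemma:connected_comp} as closely as possible, reducing every genuinely new phenomenon---arising because $G$ is only a space---to the group-scheme case over fields and over Dedekind bases. The only real work is to make sense of the open subspace $G^0$ and to prove that it is open; granting this, the exactness of the sequence and the \'etale property of $\pi_0(G/S)$ are formal and follow verbatim the scheme case.

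First I would construct $G^0$. By Proposition \ref{prop:alg_space_field} each fibre $G_s$ is a group scheme over $\kappa(s)$, so its neutral component $(G_s)^0$ is a well-defined open and closed subgroup. Define $|G^0| \subseteq |G|$ to be the set of points lying in the neutral component of their own fibre. The crux is that $|G^0|$ is open. I would check this on an \'etale atlas $u\colon X \to G$ with $X$ a smooth $S$-scheme, so that openness of $|G^0|$ is equivalent to openness of $X^0 := u^{-1}(|G^0|)$; for the latter I would use the criterion that a constructible subset of a noetherian scheme is open if and only if it is stable under generization.

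Constructibility of $X^0$ comes from Corollary \ref{coro:stratification}: over each stratum $S_i$ the space $G$ is a group scheme, so $(G|_{S_i})^0$ is open by \cite[Expos\'e VI$_B$ Th\'eor\`eme 3.10]{SGA3} (equivalently by Lemma \ref{lemma:connected_comp}), whence $X^0$ meets each stratum in a constructible set; finiteness of the stratification gives constructibility in $X$. Stability under generization I would test with a DVR: given a specialization $x' \rightsquigarrow x$ in $X$ with $x \in X^0$, realize it by a morphism $\Spec R \to X$ from a DVR $R$ and pull $G$ back along $\Spec R \to X \to G \to S$. Then $G_R$ is a smooth group \emph{scheme} by Anantharaman's theorem on group algebraic spaces over Dedekind bases (\cite{Neron}), so the scheme case makes $G_R^0$ open; the section $\Spec R \to G_R$ determined by $x$ has special value in $(G_R)^0$, and since the closed point of $\Spec R$ has no proper open neighbourhood its generic value lies in $(G_R)^0$ as well, giving $x' \in X^0$. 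This shows $|G^0|$ open, and the resulting open subspace $G^0$ is smooth of finite type over $S$; being a union of fibrewise subgroups it is a commutative subgroup space with connected fibres.

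The quotient $\pi_0(G/S) := G/G^0$ is then handled exactly as in Lemma \ref{lemma:connected_comp}: it is an algebraic group space as the quotient of a smooth equivalence relation (\cite[Corollaire 10.4]{LMB}), it is locally of finite presentation and formally \'etale (surjectivity on infinitesimal thickenings from smoothness of $G$, injectivity from openness of $G^0$), hence \'etale, and the sequence $0 \to G^0 \to G \to \pi_0(G/S) \to 0$ is exact by construction. Compatibility with base change follows because the formation of the neutral component of a smooth group scheme over a field commutes with field extension (the identity component being geometrically connected) and because quotients by smooth equivalence relations commute with base change, so that $G^0$, $\pi_0(G/S)$ and the whole sequence base change correctly. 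The main obstacle is precisely the openness of $G^0$, i.e.\ globalizing the fibrewise definition across the strata; this is why the reduction to a DVR base and the appeal to Anantharaman's theorem---returning one to the group-scheme setting of \cite{SGA3}---is the decisive step.
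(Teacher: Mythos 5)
Your overall architecture is sound and correctly isolates the crux: once openness of $G^0$ is known, the construction of $\pi_0(G/S)$ as the quotient sheaf $G/G^0$, its \'etaleness, the exactness of the sequence and the base-change compatibility all go through exactly as in Lemma \ref{lemma:connected_comp}, and this is also how the paper proceeds. The genuine gap is in your proof of openness, at the generization step. You pull $G$ back to a discrete valuation ring $R$ and assert that $G_R$ is a group \emph{scheme} by ``Anantharaman's theorem on group algebraic spaces over Dedekind bases.'' That theorem carries a \emph{separatedness} hypothesis: a group algebraic space of finite type over a Dedekind base is a scheme provided it is separated over the base, and this hypothesis is known to be essential (there are non-separated group algebraic spaces over discrete valuation rings which are not schemes). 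But Definition \ref{defn:groupspace} does not include separatedness, and the paper explicitly flags the existence of non-separated group spaces over discrete valuation rings in the remark on Lemma \ref{lemma:kd_semi_cont} in Section \ref{sect:alggroups} --- precisely because scheme-theoretic arguments break down there. So the openness of $(G_R)^0$, which your section argument over $\Spec R$ requires, is unjustified; it is essentially the statement you are trying to prove, now over the base $R$. (As a side point, \cite{Neron} is cited in the paper for Picard functors and Weil restriction; Anantharaman's result is in his 1973 m\'emoire, and in any case does not apply here.)

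The paper sidesteps this entirely by quoting \cite[Proposition 2.2.1]{Romagny_connected}, applied to the zero section of $G$: for a flat morphism of algebraic spaces (or stacks) locally of finite presentation equipped with a section, the union of the connected components of the fibres meeting that section is open in the total space --- with no separatedness assumption. That single citation replaces your constructibility-plus-generization argument and is the decisive input; your DVR reduction would have to be replaced by a result of this kind (or the lemma weakened by adding a separatedness hypothesis, which would not suffice for the paper's purposes, since $\pi_0$ of natural examples is exactly where non-separatedness arises).
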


\begin{proof}
The union $G^0$ of neutral connected components of fibres is an open subset of the set of points of the algebraic space $G$ by applying \cite[Proposition 2.2.1]{Romagny_connected} to the zero section of $G$. Hence $G^0$ corresponds to an open group subspace of $G$. One then defines $\pi_0(G/S)$ as the quotient \'etale sheaf $G/G^0$ and proceeds as in the proof of Lemma \ref{lemma:connected_comp}.  (The paper \cite{Romagny_connected} describes an \'etale group space $\pi_0(G/S)$ via its functor of points (see \cite[Definition 2.1.1.(1)]{Romagny_connected} and \cite[2.5.2.(i)]{Romagny_connected}). One can show that it coincides with the one in the Lemma above using \cite[2.5.2.(ii)]{Romagny_connected}.)
\end{proof}

We define the Kimura dimension $\kd(G)$ of $G$ and the order of $\pi_0(G/S)$ as in Section \ref{sect1}. The Kimura dimension is well-defined: assume $G$ is represented by a smooth scheme
$X$ with relations in $R$. Then the fibre dimension of $G$ is bounded
by the fibre dimension of $X$ and hence $\kd(G_s)$ is bounded by $2\dim_S X$.
The order is well-defined by Lemma \ref{lemma:alg_comp} and the fact that the sheaf $\pi_0(G/S)$ is constructible by Lemma \ref{lemma:etale_alg_spaces}.

\begin{remark}
We do not know if Lemma \ref{lemma:kd_semi_cont} holds in this more general setting. If $G/S$ is separated, the proof given there works because the group spaces $G[l^n]$ are then schemes by \cite[II, Theorem 6.15]{Knutson}. There are non-separated group spaces over discrete valuation rings, but in standard examples the non-separatedness comes from removing connected components in the special fibre, which does not affect $\kd(G/S)$. 
\end{remark}

Let
$M^\eff_S(G)$ (resp. $M_S(G)$) be the effective motive (resp. motive) of $G$ as in Appendix \ref{app_relative}.
\begin{lemma}\label{alggroup_is_geometric}
$M_S(G)$ is geometric.
\end{lemma}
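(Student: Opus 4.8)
The plan is to argue by noetherian induction on $S$, using Corollary~\ref{coro:stratification} to reduce to the group scheme case on a dense open stratum and to a smaller base on the closed complement, and then to glue the two pieces via the localization triangle. We may assume $S$ is reduced, since the pullback along $S_\red \to S$ induces an equivalence $\DA(S) \isocan \DA(S_\red)$ compatible with the formation of $M_S(G)$. If $S$ is empty there is nothing to prove, so we assume $S \neq \emptyset$ and suppose the statement holds for every proper closed subscheme of $S$.

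By Corollary~\ref{coro:stratification} there is a stratification of $S$ over whose strata $G$ is a scheme; let $j : U \hookrightarrow S$ be the open dense generic stratum and $i : Z \hookrightarrow S$ the reduced closed complement. Over $U$ the restriction $G_U/U$ is a smooth commutative group scheme, so $j^* M_S(G) \isocan M_U(G_U)$ is geometric by Theorem~\ref{MainThm}(3). Over $Z$, the base change $G_Z/Z$ is again a group space in the sense of Definition~\ref{defn:groupspace}, and $Z$ is a proper closed subscheme of $S$; hence $i^* M_S(G) \isocan M_Z(G_Z)$ is geometric by the inductive hypothesis. Here I use that the relative motive commutes with pullback, i.e. $f^* M_S(G) \isocan M_T(G_T)$ for any $f : T \to S$ (see Appendix~\ref{app_relative}).

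These two pieces are glued by the localization triangle attached to the complementary open and closed immersions $j$ and $i$:
\[ j_\# j^* M_S(G) \to M_S(G) \to i_* i^* M_S(G) \xrightarrow{+1}. \]
Geometric motives are stable under the six operations, in particular under $j_\#$ (for the open immersion $j$) and under $i_*$ (for the closed immersion $i$); see \cite{CD} and Appendix~\ref{app_relative}. Since $j^* M_S(G)$ and $i^* M_S(G)$ are geometric by the previous paragraph, the two outer terms of the triangle are geometric, and therefore so is $M_S(G)$. This closes the induction. The only point requiring care is that all of this takes place over the base scheme $S$: the localization triangle and the stability properties are statements in $\DA(S)$, $\DA(U)$ and $\DA(Z)$ about the single object $M_S(G)$, so no six-functor formalism for algebraic spaces is needed; the sole input concerning $G$ itself is the base-change identity for $M_S(G)$ together with the group scheme case Theorem~\ref{MainThm}. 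Accordingly the main obstacle is really the spreading-out statement of Corollary~\ref{coro:stratification}, which is already in hand.
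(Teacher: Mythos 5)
Your proof is correct and follows essentially the same route as the paper, which likewise stratifies $S$ via Corollary~\ref{coro:stratification}, uses compatibility of $M_S(G)$ with pullback, and concludes ``by induction on strata and localisation''; you have merely written out the localization triangle and the noetherian induction that the paper leaves implicit. The only cosmetic point is that over the open stratum $U$ the motive $M_U(G_U)$ is geometric simply by definition (since $G_U$ is a smooth $U$-scheme of finite type), so the appeal to Theorem~\ref{MainThm}(3) is unnecessary, and the pullback identity for algebraic spaces is Proposition~\ref{pullback_algspace}.
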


\begin{proof}
By Corollary \ref{coro:stratification} we have a stratification such that for any stratum $i:T\rightarrow S$, the restriction $G_T$ is a scheme. We have $i^*M_S(G)=M_T(G_T)$ by Proposition \ref{pullback_group}. The result then follows by induction on strata and localisation.
\end{proof}

In analogy with the scheme case, we define $\Mof^\eff(G/S)$ to be the image
of $G_{|\Sm/S}\tensor\Q$ in $\DM^\eff(S)$ and $\Mof(G/S)$ to be the object $L\Sigma^\infty \Mof^\eff(G/S)$ in $\DM(S)$. The definitions
of $\alpha_{G/S}$, $\phi_{G/S}$ and $\psi_{G/S}$ in Definition \ref{def:main_defM1} and Definition \ref{def:main_def}
work without changes.

\begin{theorem}\label{MainThm_alg_space}
Let $G$ be a group space in the sense of Definition \ref{defn:groupspace}. Then all assertions of Theorem \ref{MainThm} hold true.
\end{theorem}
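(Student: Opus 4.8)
The plan is to reduce the algebraic group space case to the already-established group scheme case (Theorem \ref{MainThm}) by exploiting the stratification result of Corollary \ref{coro:stratification} together with the fact, proved in Proposition \ref{prop:alg_space_field}, that over a field every algebraic group space of finite type is a scheme. The guiding principle is exactly the one used in the scheme case: all the morphisms $\alpha_{G/S}$, $\vp_{G/S}$ and $\psi_{G/S}$ are defined purely formally from the sheaf map $a_{G/S}\otimes\Q$, and this formal construction makes sense verbatim for group spaces. The key reduction tool, Lemma \ref{vanishing}, lets us check both the vanishing of $\Sym^n\Mof(G/S)$ for $n>\kd(G/S)$ and the isomorphy of $\psi_{G/S}$ after pullback to geometric points $i_{\bar s}:\bar s\to S$.

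First I would verify that all the structural inputs transfer to the group space setting. Lemma \ref{lemma:alg_comp} already provides the short exact sequence $0\to G^0\to G\to\pi_0(G/S)\to 0$ with $\pi_0(G/S)$ an \'etale group space, and the Kimura dimension and order of $\pi_0$ are well-defined as discussed. The analogue of Proposition \ref{pullback_group} is needed, namely $i_{\bar s}^*\Mof(G/S)\simeq\Mof(G_{\bar s}/\bar s)$ and the compatibility of $\alpha$ with pullback; since the resolution machinery of Theorem \ref{theorem:resolution} and the universal property $f^*\ul{G}=\ul{G_T}$ of the represented sheaf do not use that $G$ is a scheme, this goes through unchanged. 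The analogue of Lemma \ref{lemma:connected_reduction} (linearity of $\Mof$, the isomorphism $\Mof(G)\simeq\Mof(G^0)$, and naturality of $\alpha$) likewise holds, since the functor $G\mapsto\ul{G/S}_\Q$ on group spaces is still $\Z$-linear and exact and $G/G^0$ is still a torsion sheaf.

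The crucial point is that over a geometric point $\bar s$, the fibre $G_{\bar s}$ is a smooth commutative group \emph{scheme} of finite type by Proposition \ref{prop:alg_space_field}, so Proposition \ref{cor:field_case} applies to it directly. Thus I would argue: for part (1), since $i_{\bar s}^*$ commutes with $\Sym^n$ and $i_{\bar s}^*\Mof(G/S)=\Mof(G_{\bar s})$, and $\kd(G/S)\geq\kd(G_{\bar s})$, the vanishing $\Sym^n(\Mof(G_{\bar s}))=0$ for $n>\kd(G/S)$ from the field case gives the vanishing over $S$ via Lemma \ref{vanishing}. For part (2), naturality, base-change compatibility and the Hopf algebra property follow formally from the corresponding properties of $\alpha_{G/S}$ exactly as in the scheme proof, and isomorphy of $\psi_{G/S}$ is checked fibrewise using $i_{\bar s}^*\psi_{G/S}=\psi_{G_{\bar s}/\bar s}$, which is an isomorphism by Proposition \ref{cor:field_case}. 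Geometricity (part (3)) of $M_S(G)$ is already Lemma \ref{alggroup_is_geometric}; geometricity of $\Mof(G)$ and $M(\pi_0(G/S))$ then follows since they are direct factors of $M_S(G)$ via $\psi_{G/S}$. Part (4) is the same computation $\Sym^n\Mof([m])=m^n\id$ together with $[m]=\id$ on $\pi_0(G/S)$ modulo $o(\pi_0(G/S))$.

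The main obstacle I anticipate is making sure that Lemma \ref{vanishing}, which is the engine allowing the fibrewise check, is genuinely available for motives of algebraic group spaces and not only for schemes; in particular one must ensure that $M_S(G)$ lies in the subcategory to which the lemma applies. This is where Lemma \ref{alggroup_is_geometric} (geometricity via the stratification) does the real work: once $M_S(G)$ is known to be geometric, the conservativity of the family $\{i_{\bar s}^*\}$ on constructible motives gives exactly what is needed. A secondary technical check is that $\Mof(G/S)$ itself is constructible, which follows from its being a direct summand of the geometric object $M_S(G)$ once part (2) is in place; to avoid circularity one first establishes the vanishing and the isomorphism at the level of the geometric motive $M_S(G)$ and only then reads off the statements about $\Mof(G/S)$.
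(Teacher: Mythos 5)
Your overall strategy coincides with the paper's: reduce to geometric points via Lemma \ref{vanishing}, note that by Proposition \ref{prop:alg_space_field} the geometric fibres are group \emph{schemes}, and invoke Proposition \ref{cor:field_case}. But there is a genuine gap at the one step where the group space case actually differs from the scheme case, namely the base change compatibility $i_{\bar s}^*\Mof(G/S)\simeq \Mof(G_{\bar s})$. You assert that the proof of Proposition \ref{pullback_group} ``goes through unchanged'' because Theorem \ref{theorem:resolution} and the identity $f^*\ul{G}=\ul{G_T}$ do not use that $G$ is a scheme. That is not enough: the scheme-case proof computes $Lf^*\Mof(G/S)$ by applying the \emph{underived} $f^*$ to the resolution $A_\Q(G)$, and this is legitimate only because $A_\Q(G)$ is cofibrant, which in turn holds because its terms $\Z(\Gh^{a(i,j)})\otimes\Q$ are sums of sheaves representable by smooth \emph{schemes} (Remark \ref{rem:repr_cofibrant}). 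When $G$ is a group space, those terms are represented by powers of an algebraic space, and it is not clear that they are cofibrant in the projective model structure, so the derived pullback of the resolution cannot be computed termwise for free. This is precisely the point the paper's proof addresses: it invokes Proposition \ref{pullback_algspace} (the derived pullback of the motive of a smooth algebraic space is computed via the \v{C}ech nerve of a presentation by a smooth scheme, yielding $f^*M_S^{(\eff)}(Y)\simeq M_T^{(\eff)}(Y_T)$), which determines the derived pullback of each term of $A_\Q(G)$, and that is enough to conclude. Your proposal never invokes Proposition \ref{pullback_algspace}, so this step is missing.

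A secondary point: the obstacle you flag as the main one --- whether Lemma \ref{vanishing} is ``genuinely available'' for motives of group spaces, with Lemma \ref{alggroup_is_geometric} doing ``the real work'' to enable it --- is a misdiagnosis. Lemma \ref{vanishing} is stated and proved for arbitrary objects and morphisms of $\DA(S)$ (its proof uses compact generation of the category, not compactness of the object being tested), and $M_S(G)$, $\Mof(G/S)$ and $\psi_{G/S}$ all live in $\DA(S)$ by Definition \ref{defn_algspace}, so the lemma applies with no constructibility hypothesis and no circularity issue arises. Geometricity (Lemma \ref{alggroup_is_geometric}) is needed only for assertion (3) of Theorem \ref{MainThm}, exactly as you use it afterwards; it plays no role in enabling the fibrewise check. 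The real work in passing from schemes to group spaces is the cofibrancy/pullback issue described above.
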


\begin{proof}
Let $f:T\to S$ be a morphism of schemes.
By Proposition \ref{pullback_algspace}, the pullback
of $M_S(Y)$ is given by $M_T(Y_T)$ when $Y$ is an algebraic space.
Hence the same argument as in the proof of Proposition \ref{pullback_group} shows 
that $f^*\Mof(Y/S)=\Mof(Y_T/T)$. Again we show that $f^*\Mof(G/S)=\Mof(G_T/T)$ using the resolution
obtained by Theorem \ref{theorem:resolution}. It is not clear if its terms are cofibrant. However, by the above remark we understand their pullback, which is enough.

Now all steps in the proof work in the same way as in the scheme case, i.e, by
reducing to the case of $S=\Spec\ k$ with $k$ an algebraically closed
field. 
By Proposition \ref{prop:alg_space_field}, we are then back in the case of group schemes settled by Proposition \ref{cor:field_case} (in particular we do not need to consider transfers on algebraic group spaces).
\end{proof}

\begin{remark}Applying the realization functors $\Betti$ or $\ladic$, we
also obtain a computation of the  K\"unneth components. In particular, $\Betti\Mof(G)$ and $\ladic\Mof(G)$
are concentrated in degree $-1$. Conceptually this means that $\Mof(G)$ should still be a homological $1$-motive when $G$ is a group space. 
Note, however, that the results are a bit weaker than in Section \ref{section:real} because there are not as yet categories of triangulated motives over algebraic spaces satisfying the six functor formalism.
\end{remark}

\begin{appendix}

\section{Generalities on relative motives}\label{app_relative}
Let $S$ be noetherian and finite dimensional. There are various approaches to the theory of mixed motivic sheaves over $S$, even when working with rational coefficients. We refer to the end of the introduction for a brief discussion. We use the category of \'etale motives without transfers introduced originally by Morel and studied by Ayoub with the notation $\DA^\et(S,\Q)$. In the work of Cisinski and D\'eglise (see \cite[16.2.17]{CD}) $\DA^\et(S,\Q)$
is the category $\D_{\mathbb{A}^1,\et} (\Sm/S,\Q)$. By \cite[Theorem 16.2.18]{CD}
it is equivalent to their category of {\em Beilinson motives}. Since we systematically use the \'etale topology and rational coefficients, we simply write $\DA(S)$.

\begin{definition}Let $X$ be a smooth $S$-scheme. We write $\Q(X)\in\Sh_\et(\Sm)$ for the sheafification of the presheaf
\[\Q\Mor_S(-, X) \ ,\]
which associate to each smooth $S$-scheme $Y$ the $\Q$-vector space with basis the set of $S$-morphisms from $Y$ to $X$. 
\end{definition}

Write $T$ for the sheaf cokernel of the unit section $\Q(S)\rightarrow \Q(\G_{m,S})$; we view $T$ as a convenient model for the Tate motive. The category $\DA(S)$ is defined to be the homotopy category of a certain stable $\A^1$-local model category of symmetric $T$-spectra of complexes in the abelian category $\Sh_\et(\Sm)$ of \'etale sheaves in $\Q$-vector spaces on $\Sm/S$. For simplicity, we refer to an object in this model category as a {\em motivic spectrum}. The analogous construction without spectra leads to the category $\DAeff(S)$ of {\em effective triangulated motives}, which can be viewed as an $\mathbb{A}^1$-localization of the derived category of $\Sh_\et(\Sm)$. For simplicity, we refer to an object in this category as a {\em motivic complex}. 

\

Recall that to compute derived functors in this setting, one chooses a specific model category structure on (spectra of) complexes of sheaves presenting the categories of motives such that the functors of interest are Quillen functors. For this paper, we use indifferently the projective model structures (adapted from presheaves to sheaves) of \cite[Section 3]{Ayoubet} or the descent model structures of \cite[5.1.11]{CD}.

There is a sequence of functors (see \cite[5.3.23.2]{CD} for more details on the last one, the {\em infinite $T$-suspension functor})
\begin{equation}\label{functors}\Sh_\et(S)\to D(\Sh_\et(\Sm))\to\DAeff(S)\stackrel{L\Sigma^\infty}\to\DA(S).\end{equation}

\begin{definition} Let $X$ be a smooth $S$-scheme.
\begin{enumerate}
\item The {\em (relative, homological) effective motive} $M_S^\eff(X)$ is defined as the image of $\Q(X)$ in $\DAeff(S)$.
\item The motive $M_S(X)$ is defined as $L\Sigma^\infty M_S^\eff(X)$ in $\DA(S)$.
\end{enumerate}
Motivic complexes (respectively, spectra) of the form $\Q(X)$ (respectively, $\Sigma^\infty\Q(X)$)  are called {\em representable}.

The category $\DAgm(S)$ of \textit{constructible} or \textit{geometric}  motives over $S$ is defined as the thick triangulated subcategory of $\DA(S)$ generated by the
motives of the form $M_S(X)(i)$ for any smooth $S$-scheme $X$ and any integer $i \in \Z$.
\end{definition}
\begin{remark}
\label{rem:repr_cofibrant}
The projective model structures that we are using have relatively few cofibrant objects - as they should, since we want to use them to compute left derived functors - but crucially representable objects are cofibrant (by definition for the descent model structure of \cite[5.1.11]{CD}, and as they have the left lifting property with respect to surjective quasi-isomorphisms for the projective one).
\end{remark}

\begin{remark}
A motive is constructible if and only if it is a \textit{compact} object of the triangulated category $\DA(S)$ (see \cite[Proposition 8.3]{Ayoubet}).
\end{remark}

By \cite{Ayoub_these_1} and \cite{Ayoub_these_2} (see also \cite{CD}), the categories $\DA(S)$ satisfy a full six functor formalism. We are only going to use explicitely the tensor product and the pullback $f^*$ for morphisms $f:T\to S$. The following proposition sums up the computational properties
we need.

\begin{lemma}\label{lemma_motives}
Let $f:S\to T$ be a morphism of noetherian schemes of finite dimension.
\begin{enumerate}
\item\label{tensor_kunn} All functors in the sequence (\ref{functors}) are monoidal.
In particular, if $X,Y$ are objects in $\Sm/S$, then 
\begin{gather*}
\Q(X)\tensor\Q(Y)\simeq \Q(X\times Y) \ , \\
M_S^{(\eff)}(X)\tensor M_S^{(\eff)}(Y)\simeq M_S^{(\eff)}(X\times Y) \ .
\end{gather*}
\item\label{pullback_representable} For $X\in\Sm/S$, we have canonical isomorphisms: 
\begin{equation*}
f^*\Q(X)\simeq\Q(X_T)\text{ and }\ f^*M_S^{(\eff)}(X)\simeq M_T^{(\eff)}(X_T) \ .
\end{equation*}
\item\label{pullback_suspension} There is a canonical $2$-isomorphism $L\Sigma^\infty f^*\simeq f^*L\Sigma^\infty.$
\item\label{pullback_tensor} The pullback functor $f^*$ is monoidal.
\end{enumerate}
\end{lemma}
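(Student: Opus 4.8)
The plan is to derive all four statements from the foundational constructions of \cite{Ayoubet} and \cite{CD}, reducing each assertion to a computation on representable objects, where everything is controlled by the universal property of the fibre product and by the linearized Yoneda embedding $X\mapsto\Q(X)$. None of these facts is new; the real work is in identifying the correct reference for each monoidal or adjunction structure and in making the ``in particular'' K\"unneth isomorphisms explicit.

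For the monoidality asserted in (\ref{tensor_kunn}), I would first recall that the tensor product on $\Sh_\et(\Sm)$ is the unique cocontinuous symmetric monoidal structure for which $X\mapsto\Q(X)$ is strong monoidal from $\Sm/S$ (with fibre product) to sheaves; this gives $\Q(X)\tensor\Q(Y)\simeq\Q(X\times_S Y)$ by construction. Each subsequent functor in (\ref{functors}) is then monoidal for structural reasons: passage to the derived category carries the derived tensor product, since the model structures in use are monoidal; $\A^1$-localization is a monoidal Bousfield localization because tensoring preserves $\A^1$-equivalences; and $L\Sigma^\infty$ is strong monoidal as the infinite suspension of symmetric $T$-spectra. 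The K\"unneth isomorphisms follow by applying these functors to $\Q(X)\tensor\Q(Y)\simeq\Q(X\times_S Y)$.

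For (\ref{pullback_representable}), on sheaves $f^*$ is the left adjoint of $f_*$, and the universal property of the fibre product identifies $f^*$ of the sheaf represented by $X$ with the sheaf represented by $X_T$; linearizing gives $f^*\Q(X)\simeq\Q(X_T)$. Since representable sheaves are cofibrant (Remark \ref{rem:repr_cofibrant}) and $f^*$ is left Quillen, this descends to $f^*M^\eff_S(X)\simeq M^\eff_T(X_T)$ in $\DAeff$, and the stable statement then follows from (\ref{pullback_suspension}). For (\ref{pullback_tensor}), monoidality of $f^*$ is part of the six functor formalism and may be checked on generators, where it reduces to $\Q(X_T)\tensor\Q(Y_T)\simeq\Q((X\times_S Y)_T)$, i.e.\ to compatibility of base change with fibre products. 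For (\ref{pullback_suspension}), $L\Sigma^\infty$ commutes with $f^*$ because both are cocontinuous and $f^*$ preserves the Tate object, $f^*T\simeq T$ (a cokernel that $f^*$, being a left adjoint, respects).

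The only genuinely delicate point, and the one I would verify most carefully, is model-categorical: that the structures presenting these categories are monoidal model structures and that $\A^1$-localization and $T$-stabilization interact correctly with the Quillen adjunction $(f^*,f_*)$, so that the strict-level identifications of representables pass to the derived and stable levels. This is precisely what the constructions of \cite{Ayoubet, CD} guarantee, so no new argument is needed beyond assembling the references.
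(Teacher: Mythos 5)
Your proposal is correct and follows essentially the same route as the paper: both reduce every assertion to representable objects via the universal property of the fibre product, use cofibrancy of representables together with the fact that $f^*$ is left Quillen to pass from sheaves to (effective and stable) motives, and delegate the monoidal and stabilization compatibilities to the foundational results of \cite{Ayoub_these_2} and \cite{CD}. The only cosmetic difference is that for part (\ref{tensor_kunn}) the paper justifies monoidality of the first functor in (\ref{functors}) by the exactness of the tensor product of sheaves of $\Q$-vector spaces, whereas you phrase the same point through monoidal model structures and cofibrant representables; both are valid instances of the same argument.
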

\begin{proof} 
Assertion (\ref{tensor_kunn}) follows from the universal property of fibre products, the fact that the tensor product on sheaves of $\Q$-vector spaces is exact, and \cite[Corollaire 4.3.72]{Ayoub_these_2}.

Let us prove assertion (\ref{pullback_representable}). The formula for the pull-back of $\Q(X)$ follows from the universal property
of the fibre product. As the functors $f^*$ on motivic complexes and spectra are left Quillen (by combining \cite[5.1.14]{CD} and \cite[5.3.28]{CD}) and representable objects are cofibrant, the same formula also holds in the categories of effective and non-effective motives.

Assertion (\ref{pullback_suspension}) holds by \cite[Section 5.23]{CD}.

Assertion (\ref{pullback_tensor}) follows from the analoguous property for the pullback on motivic complexes (spectra), for which we refer to the proofs of \cite[Propositions 4.5.16, 4.5.19]{Ayoub_these_2}.
\end{proof}

\begin{lemma}\label{vanishing}
\begin{enumerate}
\item
Let $M\in \DA(S)$ be a motive. Then $M$ is zero if and only if the pullback $i_{\bar{s}}^*M$ to any geometric point 
\[i_{\bar{s}}: \bar{s} \longrightarrow S\]
is zero. 
\item Let $f$ be a morphism in $\DA(S)$. Then $f$ is an isomorphism if and only if the pullback $i_{\bar{s}}^*(f)$ to any geometry point $\bar{s}\to S$ is an isomorphism. 
\end{enumerate}
\end{lemma}
\begin{proof}The second statement follows from the first by the axioms
of a triangulated category.

We turn to the proof of the first. 
By \cite[Proposition 3.24]{Ayoubet} the family $i_s^*$ for all points $s\in S$ 
is conservative. Hence, we may now assume that $S=\Spec\ k$ is the spectrum of a field. Let $k^i$ be the inseparable closure of $k$. It is well-known that pull-back induces an equivalence of categories between the category of \'etale sheaves on $\Spec\ k$ and the category of
\'etale sheaves on $\Spec\ k^i$. It is much more difficult, but nonetheless true, that it induces an equivalence between the categories of motives (see \cite[Proposition 2.1.9]{CD} and \cite[Theorem 14.3.3]{CD}).

We may now assume that $k$ is perfect.
Let $\bar{k}$ be an algebraic closure of $k$. Let $P\in\DA(\Spec\ k)$
such that the pull-back $i^*$ to $\Spec\ \bar{k}$ vanishes.
As the category
$\DA(S)$ is compactly generated by \cite[Proposition 3.19]{Ayoubet}, it suffices
to show that all morphisms $f:M\to P$ with $M$ compact vanish. By assumption,
the morphism $i^*(f)$ vanishes. By \cite[Proposition 3.20]{Ayoubet} this 
implies that the pull-back of $f$ to some finite separable extension of $k$ vanishes. By \cite[Lemme 3.4]{Ayoubet} such a restriction is conservative and so $f=0$.
\end{proof}

We need to consider motives of algebraic spaces. The definition follows
the method of Choudhury \cite{utsav}, who considered the case of stacks over
a field. We only develop the minimal amount of results necessary.

Let $Y$ be an algebraic space 
presented by a scheme $X$ and an \'etale equivalence relation $R$, i.e., $Y$ is the cokernel of the diagram of sheaves of sets over the site $\Sch/S$ with the \'etale topology
\[ R\rightrightarrows X\ .\]

\begin{definition}\label{defn_algspace}Let $Y$ be a smooth algebraic space. 
Let $\Q(Y)$ be the sheaf associated to the presheaf $\Q Y_{|\Sm/S}(\cdot)$. 
We define the
relative motive $M^\eff_S(Y)\in\DAeff(S)$ as the image of the sheaf $\Q(Y)$ and $M_S(Y)\in\DA(S)$ as $L\Sigma^\infty M^\eff_S(Y)$.
\end{definition}
If $Y$ is a scheme, this agrees with the definition
given before.

\begin{proposition}\label{pullback_algspace}
Let $f:T\to S$ be a morphism and $Y/S$ a smooth algebraic space.
Then there are canonical isomorphisms
\[ f^*M_S^{(\eff)}(Y)\simeq M_T^{(\eff)}(Y_T).\]
\end{proposition}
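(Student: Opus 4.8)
The plan is to reduce the statement for algebraic spaces to the already-established case of schemes, exactly as was done for group schemes in Proposition \ref{pullback_group}, by using an \'etale presentation of $Y$. The key observation is that $Y$ is a colimit (the cokernel of $R \rightrightarrows X$ in sheaves of sets), so $\Q(Y)$ is a colimit of representable sheaves, and both $f^*$ (at the level of sheaves) and the motive functor should commute with this colimit.

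First I would recall that by Lemma \ref{lemma_motives}(\ref{pullback_suspension}), $L\Sigma^\infty$ commutes with $f^*$, so it suffices to treat the effective case $f^*M_S^\eff(Y) \simeq M_T^\eff(Y_T)$. Next, since $Y$ is the cokernel of $R \rightrightarrows X$ as an \'etale sheaf of sets, applying the free $\Q$-vector space functor and sheafifying gives that $\Q(Y)$ is the coequalizer (equivalently, can be computed from) the diagram $\Q(R) \rightrightarrows \Q(X)$ of representable sheaves. More usefully, one has a simplicial resolution: the \v{C}ech-type simplicial object associated to the \'etale cover, whose $n$-th term involves the $(n{+}1)$-fold fibre products of $X$ over $Y$, yields a resolution of $\Q(Y)$ by representable sheaves. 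The upshot is that $M_S^\eff(Y)$ is computed as the (homotopy) colimit in $\DAeff(S)$ of the motives $M_S^\eff$ of the schemes $R, X$ and their higher fibre products.

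The second step is to check that underived $f^*$ on sheaves sends this presentation of $\Q(Y)$ to the corresponding presentation of $\Q(Y_T)$. This follows from the fact that forming fibre products and \'etale equivalence relations commutes with base change: $Y_T$ is presented by $X_T$ and $R_T$, and $X_T$, $R_T$ and their higher fibre products are the base changes of $X$, $R$ and theirs. Since $f^*$ on sheaves of sets preserves colimits and fibre products (being a left adjoint that moreover commutes with finite limits of this kind), we get $f^*\Q(Y) \simeq \Q(Y_T)$ as sheaves. Combined with the known scheme case $f^*\Q(X) \simeq \Q(X_T)$ for the representable terms (Lemma \ref{lemma_motives}(\ref{pullback_representable})), and the fact that the triangulated functor $Lf^*$ is monoidal and commutes with homotopy colimits (being a left adjoint), passing to $\DAeff(S)$ yields the desired isomorphism.

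The main obstacle I anticipate is the derived subtlety: the sheaf $\Q(Y)$ need not be cofibrant, so one cannot directly conclude $Lf^*M_S^\eff(Y) \simeq f^*\Q(Y)$ from the underived sheaf-level computation, just as the paper flags for $\ul{G/S}$ (the non-cofibrancy issue resolved there by Theorem \ref{theorem:resolution}). The clean way around this is to work with the simplicial/\v{C}ech resolution by representable sheaves, each term of which \emph{is} cofibrant (Remark \ref{rem:repr_cofibrant}); then $Lf^*$ can be computed termwise, and it commutes with the homotopy colimit. The delicate points to verify carefully are that the \'etale-local presentation genuinely produces a homotopy colimit diagram computing $M_S^\eff(Y)$ in the $\A^1$-local \'etale model structure (this is where the \'etale descent built into the model category is essential), and that base change is compatible with this homotopy colimit at the level of the derived pullback functor. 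Once these compatibilities are in place, the result follows formally.
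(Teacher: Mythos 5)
Your proposal is correct and takes essentially the same approach as the paper's own proof (which follows Choudhury's method for Deligne--Mumford stacks): reduce to the effective case via commutation of $L\Sigma^\infty$ with $f^*$, resolve $\Q(Y)$ by the \v{C}ech nerve of the cover $X\to Y$ coming from the presentation, use that its terms are representable and hence cofibrant to compute the pullback termwise, and identify the result with the \v{C}ech nerve of $X_T\to Y_T$. The only point the paper spells out that you leave implicit is that the higher fibre products $X\times_Y\cdots\times_Y X$ are genuinely smooth $S$-schemes (not just algebraic spaces), which follows from $X\times_Y X\simeq R$ being \'etale over the smooth scheme $X$.
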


\begin{proof}
By Lemma \ref{lemma_motives}(\ref{pullback_suspension}) it is enough to settle the effective statement.

We follow the method of \cite[Cor. 2.14]{utsav} which was considering Deligne-Mumford stacks.
Let $(X,R)$ be a presentation of $Y$ and $X_\bullet$ be the Cech nerve of the cover $X\to Y$. By general
principles, the covering map
\[ X_\bullet\to Y\]
is a weak equivalence of simplicial sheaves. Hence it induces a quasi-iso\-mor\-phism of complexes of sheaves of $\Q$-vectors spaces
\[ \Q(X_\bullet)\to \Q(Y)\ .\] 
We are going to show that $X_n$ is a smooth $S$-scheme. Assuming this, we
compute $f^*M_S(X)$ as
\[ f^*\Q(X_\bullet)=\Q( (X_\bullet)_T)\ .\]
$(X_\bullet)_T$ is the Cech nerve of the cover $X_T\to Y_T$, hence the
latter is quasi-isomorphic to $\Q(Y_T)$.

By definition, $X_n$ is
the $(n+1)$-fold fibre product of sheaves of sets
\[ X_n=X\times_Y\times X\dots\times_Y X=(X\times_Y X)\times_X\dots \times_X(X\times_X X)\ .\]
Since $X\times_YX\simeq R$ we see that $X_n$ is a smooth $X$-scheme. As $
X$ is smooth, this makes $X_n$ a smooth $S$-scheme.
\end{proof}

\section{Motives with and without transfers}\label{app_transfers}
Let $S$ be a noetherian scheme of finite dimension.

\subsection{Sheaves with and without transfers}\label{subsection sheaves with without}
\begin{definition}
\begin{enumerate}
\item
Let $\Sm^\tr$ be the category of smooth correspondences over $S$. Its objects
are the objects of $\Sm$ and morphisms are finite $\Q$-correspondences in the sense of
\cite[Definition 9.1.2]{CD}.
\item
Let $\Sh_\et(\Sm^\tr)$ be the category of additive presheaves of $\Q$-vector spaces on $\Sm^\tr$ whose 
restriction to $\Sm$ is an \'etale sheaf. We call these objects
{\em \'etale sheaves with transfers}.
\item
Let $X\in\Sm$. Then $\Q(X)^\tr$ is defined as the representable presheaf with transfers
$c_S(\cdot,X)$ (which is in fact a sheaf).
\end{enumerate}
\end{definition}
The category $\Sh_\et(\Sm^\tr)$ is monoidal. The tensor product is characterized
by the formula
\[ \Q(X)^\tr\tensor\Q(Y)^\tr=\Q(X\times Y)^\tr\]
for $X,Y\in\Sm$.

By  \cite[Corollary 10.3.11]{CD}, there is an adjoint pair of functors
\[ \gamma^*:\Sh_\et(\Sm)\rightleftarrows  \Sh_\et(\Sm^\tr):\gamma_*\ ,\]
where $\gamma_*$ is simply the restriction from sheaves with transfers to sheaves without transfer, and $\gamma^*$ verifies the following properties.
\begin{lemma}\label{lemma with without}
Let $\gamma^*$ and $\gamma_*$ the functors defined above. Then, for all $S$-smooth schemes $p: X \rightarrow S$ and all sheaves with transfers $F$, the following statements hold:
\begin{enumerate}
\item\label{transfers on X} $\gamma^*\Q(X)=  \Q(X)^\tr$,
\item\label{transfers monoidal} the functor $\gamma^*$ is monoidal.
\end{enumerate}
\end{lemma}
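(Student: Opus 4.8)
The plan is to treat the two assertions separately: statement (\ref{transfers on X}) is a formal consequence of the adjunction $\gamma^*\dashv\gamma_*$ and the Yoneda lemma, and statement (\ref{transfers monoidal}) then follows by reducing to representable objects and extending by colimits. For part (\ref{transfers on X}) I would argue as follows. Since $\Q(X)^\tr=c_S(\cdot,X)$ is the representable sheaf with transfers, the Yoneda lemma gives $\Hom_{\Sh_\et(\Sm^\tr)}(\Q(X)^\tr,F)=F(X)$ for every $F\in\Sh_\et(\Sm^\tr)$. On the other side, $(\gamma_*F)(X)=F(X)$ because $\gamma_*$ is restriction, and $\Hom_{\Sh_\et(\Sm)}(\Q(X),\gamma_*F)=(\gamma_*F)(X)$ because $\Q(X)$ is the sheafification of the representable presheaf and $\gamma_*F$ is a sheaf. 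Combining these with the adjunction yields, naturally in $F$,
\[\Hom(\gamma^*\Q(X),F)=\Hom(\Q(X),\gamma_*F)=F(X)=\Hom(\Q(X)^\tr,F),\]
so $\gamma^*\Q(X)\isocan\Q(X)^\tr$ by Yoneda.

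For part (\ref{transfers monoidal}) I would exploit that $\gamma^*$, being a left adjoint, preserves all colimits, and that every \'etale sheaf of $\Q$-vector spaces on $\Sm$ is a colimit of representable sheaves $\Q(X)$ (sheafification preserves colimits and the representable presheaves are dense). Moreover both monoidal structures are closed, hence their tensor products preserve colimits in each variable. On representables, part (\ref{transfers on X}) together with the formulas $\Q(X)\tensor\Q(Y)\isom\Q(X\times Y)$ from Lemma~\ref{lemma_motives}(\ref{tensor_kunn}) and $\Q(X)^\tr\tensor\Q(Y)^\tr=\Q(X\times Y)^\tr$ gives
\[\gamma^*(\Q(X)\tensor\Q(Y))=\gamma^*\Q(X\times Y)=\Q(X\times Y)^\tr=\gamma^*\Q(X)\tensor\gamma^*\Q(Y).\]
Extending in each variable by colimits produces a natural transformation $\gamma^*(F\tensor G)\to\gamma^*F\tensor\gamma^*G$, which is an isomorphism since both sides commute with colimits in $F$ and in $G$ and it is one on representables. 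Preservation of the unit is immediate: $\gamma^*\Q(S)=\Q(S)^\tr$ by part (\ref{transfers on X}).

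The one point that genuinely requires care, and which I expect to be the main obstacle, is that this isomorphism is compatible with the associativity, unit, and symmetry constraints, i.e.\ that $\gamma^*$ is strong symmetric monoidal rather than merely tensor-preserving on objects. The cleanest way to dispatch this is to observe that both monoidal structures are Day convolutions induced from the (symmetric) fibre-product structures on $(\Sm,\times)$ and $(\Sm^\tr,\times)$, and that the graph functor $\Sm\to\Sm^\tr$ is strictly symmetric monoidal. As $\gamma^*$ is the colimit-preserving extension of this functor that matches the Yoneda generators by part (\ref{transfers on X}), the universal property of Day convolution promotes it to a symmetric monoidal functor, all coherence diagrams reducing to the corresponding strict diagrams for fibre products of schemes. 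The remainder of the verification is then purely formal.
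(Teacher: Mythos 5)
Your proposal is correct, but it follows a genuinely different route from the paper's proof. The paper disposes of both claims with a single citation: by \cite[Corollary 10.3.11]{CD}, the pair $(\gamma^*,\gamma_*)$ is an adjunction of abelian premotivic categories, and by the very definition of a morphism of monoidal $\mathcal{P}$-fibred categories \cite[Definition 1.2.7]{CD}, the left adjoint $\gamma^*$ is monoidal and commutes with the functors $p_{\#}$; combined with the identity $\Q(X)=p_{\#}\Q_X$ for $p:X\to S$ smooth and preservation of units, the latter yields (\ref{transfers on X}). So in the paper both properties are built into the Cisinski--D\'eglise construction of $\gamma^*$, and the proof just unwinds definitions, whereas you reconstruct the content by hand: your adjunction-plus-Yoneda computation for (\ref{transfers on X}) is complete and correct, and your treatment of (\ref{transfers monoidal}) via density of representables, cocontinuity of both tensor products in each variable, and the Day-convolution universal property for the coherence constraints is essentially the argument hidden behind the cited corollary. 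What the citation buys is brevity and the assurance that the delicate foundational points are already established in op.\ cit.; what your argument buys is self-containedness and a clear view of where each ingredient is used. One caveat if your version is to stand on its own: both monoidal structures are \emph{localized} (sheafified) Day convolutions, so your final ``purely formal'' step must be run at the presheaf level and then descended along sheafification; that this descent works --- i.e., that \'etale sheafification exists on presheaves with transfers, is exact, and makes both sheafification functors monoidal localizations (here rational coefficients matter) --- is precisely the nontrivial input packaged in \cite[Section 10.3]{CD}, and should be cited rather than treated as formal.
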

\begin{proof}
By \cite[Corollary 10.3.11]{CD},  the functor $\gamma_*$ induces an adjunction of abelian $\mathcal{P}$-premotivic categories \cite[Definition 1.4.6]{CD}. This implies both properties of the statement. Indeed, by definition,  $\gamma^*$ is in particular a morphism of monoidal $\mathcal{P}$-fibered categories \cite[Definition 1.2.7]{CD} (which implies \ref{transfers monoidal}) and commutes with the functors $p_{\#}$ (which implies \ref{transfers on X}).
\end{proof}

There is an alternative description of $\gamma^*$ via the category of $\qfh$-sheaves. Let $\Sch$ be the category of schemes of finite type over $S$. The inclusion of categories induces a morphism of sites $p:\Sch_\et\to\Sm_\et$ and 
a pair of adjoint functors
\[ p^*:\Sh_\et(\Sm)\rightleftarrows  \Sh_\et(\Sch):p_*\ ,\]
where $p_*$ is simply the restriction. We also write more suggestively
$F|_\Sm=p_*F$.
Recall the $\qfh$-topology on $\Sch$ from \cite[Section 4.1]{RCCS} . Roughly, it the topology generated by open covers and finite surjective morphisms. As the qfh topology refines the \'etale topology, there is a morphism of sites 
$r:\Sch_\qfh\to\Sch_\et$ and a pair of adjoint functors
\[ r^*:\Sh_\et(\Sch)\rightleftarrows\Sh_\qfh(\Sch):r_*.\]
We also write more suggestively $F_\qfh=r^*F$. In particular, $\Q(X)_\qfh$ is the sheafification of the presheaf $\Q(X)$ on $\Sch$.
As usual we denote by $\Q(X)^\tr\in\Sh_\et(\Sm^\tr)$ the representable sheaf with transfers
defined by $X$.

 The following result is due to Suslin and Voevodsky \cite[Proposition 4.2.7, Theorem 4.2.12]{RCCS}.
\begin{theorem}\label{SVqfh}
For $X\in\Sm$, the canonical map $\Q(X)\to \gamma_*\Q(X)^\tr$ factors through the qfh sheafification and induces an isomorphism:
\[\Q(X)_\qfh|_\Sm\stackrel{\sim}{\to}\gamma_*\Q(X)^\tr\ .\]
\end{theorem}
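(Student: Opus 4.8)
The plan is to prove Theorem \ref{SVqfh} by combining the universal property of the qfh sheafification with the Suslin--Voevodsky theory relating qfh sheaves and finite correspondences. First I would recall the factorization: the canonical map $\Q(X)\to\gamma_*\Q(X)^\tr$ is given by sending a morphism $Y\to X$ to its graph, viewed as a finite correspondence. The key observation is that $\gamma_*\Q(X)^\tr$ is itself a qfh sheaf on $\Sch$ (after the evident extension from $\Sm$), so by the universal property of qfh sheafification the map factors uniquely through $\Q(X)_\qfh$. This gives the morphism in the statement; the content is that it is an isomorphism.

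For the isomorphism, I would invoke the work of Suslin and Voevodsky on qfh sheaves directly, since the paper explicitly cites \cite[Proposition 4.2.7, Theorem 4.2.12]{RCCS}. The heart of their result is a comparison of two functors: the free qfh sheaf generated by $X$ and the sheaf of finite correspondences $c_S(\cdot,X)$. Concretely, one shows that for a normal connected scheme $Y$, every qfh-local section of $\Q(X)$ over $Y$ comes from a finite correspondence, and conversely. The surjectivity uses that a finite surjective morphism becomes a qfh cover, allowing one to realize a finite correspondence (a formal sum of closed subschemes finite and surjective over components of $Y$) as a genuine map after a qfh refinement. The injectivity uses the separatedness and the fact that two correspondences agreeing qfh-locally must agree, which reduces to a statement over the normalization.

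The main technical step, and the one I expect to be the principal obstacle, is the identification of qfh-local sections with finite correspondences, i.e. establishing that $\gamma_*\Q(X)^\tr$ satisfies qfh descent and that the induced map from the qfh sheafification is bijective on sections. This rests on Suslin--Voevodsky's rigidity and normality arguments, which require controlling the behavior of finite correspondences under finite surjective base change and under normalization. Since the cited references already supply exactly this comparison, my proof would be short: construct the factorization via the universal property, verify that the target is a qfh sheaf on $\Sch$, and then quote \cite[Proposition 4.2.7, Theorem 4.2.12]{RCCS} to conclude that the resulting map $\Q(X)_\qfh|_\Sm\to\gamma_*\Q(X)^\tr$ is an isomorphism. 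The only point demanding care is checking the coefficients are rational throughout, which is automatic here since the paper works with $\Q$-correspondences and $\Q$-vector space coefficients, so no denominators obstruct the comparison.
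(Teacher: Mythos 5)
Your proposal takes essentially the same route as the paper: the paper offers no independent proof of this statement, but records it as a known result of Suslin and Voevodsky with exactly the citation \cite[Proposition 4.2.7, Theorem 4.2.12]{RCCS}, which is also the core of your argument. Your additional remarks --- the factorization through the qfh sheafification via the universal property, the normality and finite-cover arguments inside the cited results, and the observation that rational coefficients remove any torsion obstruction --- are consistent with that reference and with how the paper uses the theorem, so there is nothing to correct.
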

This implies that any $\qfh$-sheaf has canonical transfers; see \cite[Proposition 10.5.8]{CD}. 

A priori, the functor $\gamma^*$ is only right exact. 
(The above theorem suggests that $\gamma^*$ should be equal to $p_*r_*r^*p^*$, the source of non-exactness
being $r_*$.) 
It is not clear whether it is also left exact. However, we have the following criterion.

\begin{lemma}\label{specialgamma}
Assume $S$ normal. Let $F$ be an \'etale sheaf of $\Q$-vector spaces on $\Sch$ and 
\[ C^*\to F\]
a resolution in the same category such every $C^i$ is of the form $\Q(X_i)$ for some smooth $X_i$. Then the following holds.
\begin{enumerate}[(i)]
\item \label{exact_gamma} The complex 
\[ \gamma^*(C^*|_\Sm)\to \gamma^*(F|_\Sm)\to 0\]
of objects in $\Sh_\et(\Sm^\tr)$ is exact.
\item \label{gamma_qfh}
There is a canonical isomorphism
\[(F_\qfh)|_\Sm\stackrel{\sim}{\longrightarrow} \gamma_*\gamma^*(F|_\Sm).\]
and this isomorphism identifies the unit map $F|_{\Sm}\rightarrow \gamma_*\gamma^*(F|_\Sm)$ and the unit map $F|_\Sm\rightarrow (F_\qfh)|_\Sm$.
\end{enumerate}
\end{lemma}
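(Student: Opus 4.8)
The plan is to prove the two parts together, the guiding idea being to route $\gamma^*$, which is only right exact a priori, through the \emph{exact} $\qfh$-sheafification functor $r^*$, using Theorem \ref{SVqfh} as the bridge on representable objects. The one place where the hypothesis enters is the observation that if $S$ is normal then every $Y\in\Sm/S$ is normal (smooth over normal is normal); this is what makes the Suslin--Voevodsky comparison between $\qfh$-sheaves and sheaves with transfers available on all the schemes we test against. Throughout I will use that $\gamma_*$ is exact and conservative, that the restriction functor $(\cdot)|_\Sm\colon\Sh_\et(\Sch)\to\Sh_\et(\Sm)$ is exact (a sequence of étale sheaves on $\Sch$ has the same stalks at geometric points of smooth schemes whether computed on $\Sch$ or on $\Sm$, since étale neighbourhoods of such a point may be taken smooth), and that $r^*$ is exact.

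Introduce the two functors $\Lambda(G):=(G_\qfh)|_\Sm$ and $\Theta(G):=\gamma_*\gamma^*(G|_\Sm)$ from $\Sh_\et(\Sch)$ to $\Sh_\et(\Sm)$. On a representable $\Q(X)$ with $X$ smooth, Lemma \ref{lemma with without} gives $\gamma^*(\Q(X)|_\Sm)=\Q(X)^\tr$, and Theorem \ref{SVqfh} then yields a natural isomorphism $\Theta(\Q(X))=\gamma_*\Q(X)^\tr\cong \Q(X)_\qfh|_\Sm=\Lambda(\Q(X))$, compatible with the unit maps out of $\Q(X)|_\Sm$ (this compatibility is precisely the factorisation through $\qfh$-sheafification recorded in Theorem \ref{SVqfh}). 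Applying the exact functor $r^*$ to the representable resolution $C^*\to F$ produces an exact complex of $\qfh$-sheaves $C^*_\qfh\to F_\qfh$. The crux of the argument, and what I expect to be the main obstacle, is to show that restricting this exact complex to $\Sm$ keeps it exact, i.e. that $\Lambda(C^*)\to\Lambda(F)\to 0$ is exact. This is a genuine point, since the inclusion $\Sh_\qfh(\Sch)\hookrightarrow\Sh_\et(\Sch)$ is not exact in general. It should follow from normality: the stalk at a geometric point of a smooth $Y$ is the value on the strict henselisation $\mathcal{O}^{sh}_{Y,\bar y}$, a normal strictly henselian local ring, over which (rationally) $\qfh$-covers are refined so that $\qfh$-sections agree with presheaf sections; concretely this is the Suslin--Voevodsky comparison of $\qfh$- and étale cohomology for normal schemes with $\Q$-coefficients. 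I would isolate this as a separate lemma.

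Granting the crux, both statements follow formally. For part (\ref{gamma_qfh}): the functor $\Theta$ is right exact (composite of the exact $(\cdot)|_\Sm$, the right exact $\gamma^*$ and the exact $\gamma_*$), so $\Theta(F)=\mathrm{coker}\big(\Theta(C^{-1})\to\Theta(C^0)\big)$; the natural termwise isomorphisms $\Theta(C^i)\cong\Lambda(C^i)$ commute with the differentials and so assemble into an isomorphism of complexes $\Theta(C^*)\cong\Lambda(C^*)$, whence $\Theta(F)\cong\mathrm{coker}\big(\Lambda(C^{-1})\to\Lambda(C^0)\big)=\Lambda(F)=(F_\qfh)|_\Sm$, the last equality being the crux. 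The identification of the two unit maps is then forced by the naturality of all the isomorphisms, the surjectivity of $C^0|_\Sm\to F|_\Sm$, and the unit-compatibility in Theorem \ref{SVqfh}. For part (\ref{exact_gamma}): the crux says $\Lambda(C^*)\to\Lambda(F)$ is a resolution, hence so is $\Theta(C^*)\to\Theta(F)$ via the isomorphism of complexes just used; since $\Theta=\gamma_*\gamma^*\big((\cdot)|_\Sm\big)$ and $\gamma_*$ is exact and conservative, this exactness descends to give that $\gamma^*(C^*|_\Sm)\to\gamma^*(F|_\Sm)\to 0$ is exact in $\Sh_\et(\Sm^\tr)$, as desired.
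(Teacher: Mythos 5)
Your proposal is correct and follows essentially the same route as the paper's proof: identify $\gamma_*\gamma^*$ with $\qfh$-sheafification termwise on the representables $C^i$ via Theorem \ref{SVqfh}, use exactness of $\qfh$-sheafification plus the normality/$\Q$-coefficients trace argument (your ``crux'', which the paper likewise delegates to \cite[Theorem 3.4.1]{VHS}) to see that restriction to $\Sm$ preserves exactness, and then conclude with right-exactness of $\gamma^*$ and exactness/conservativity of $\gamma_*$. The only difference is the cosmetic ordering of the two parts.
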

\begin{proof}
The complex $C^*_{\qfh}\to F^*_{\qfh}\to 0$ is an exact complex of qfh sheaves on $\Sch$. We claim that its restriction to $\Sm$ is an exact complex of \'etale sheaves. Since $S$ (and hence every scheme in $\Sm$) is normal and we work with sheaves of $\Q$-vector spaces, this follows from a trace argument (see \cite[Theorem 3.4.1]{VHS}).

Because of the assumption on the $C^i$, Theorem \ref{SVqfh} implies that the natural map $C^*|_\Sm\ra \gamma_*\gamma^*(C^*|_\Sm)$ factors through $C^*_{\qfh}|_\Sm$ and that we get an isomorphism of complexes in $\Sh_\et(\Sm)$:
\[
\delta:C^*_{\qfh}|_\Sm\stackrel{\sim}{\longrightarrow}\gamma_*\gamma^*(C^*|_\Sm)
\]

This implies that the complex $\gamma_*\gamma^*(C^*|_\Sm)$ is exact. Since $\gamma^*$ is right exact and $\gamma_*$ is exact, we get that the complex
\[ \gamma_*\gamma^*(C^{-1}|_\Sm)\to\gamma_*\gamma^*(C^0|_\Sm)\to \gamma_*\gamma^*(F|_\Sm)\to 0\]
is also exact. Put together, this implies that $\gamma_*\gamma^*(C^*|_\Sm)\to \gamma_*\gamma^*(F|_\Sm)\to 0$ is exact. Since $\gamma_*$ is exact and faithful, this proves point (\ref{exact_gamma}). Now the isomorphism $\delta$ induces an isomorphism on $H^0$ which proves the first part of (\ref{gamma_qfh}). The second part is formal from the definition of the map.
\end{proof}

\subsection{Triangulated categories with and without transfers}\label{subsection with without}
We can now introduce the triangulated categories of motives with transfers $\DMeff(S)$ and $\DM(S)$ which are defined in a completely parallel manner to $\DAeff(S)$ and $\DA(S)$ by replacing sheaves with sheaves with transfers. They also occur as homotopy categories of model structures, for which we do not go into details and refer to \cite[Definition 11.1.1]{CD}.

We have a sequence of functors for motives with transfers
\begin{equation}\label{functors2}\Sh^\tr_\et(S)\to D(\Sh_\et(\Sm^{\tr}))\to\DMeff(S)\stackrel{L\Sigma^\infty_\tr}\to\DM(S)\ .\end{equation}

The functors $\gamma^*$ and $\gamma_*$ between abelian categories of sheaves induce Quillen adjunctions at the level of the model categories on motivic complexes and spectra (see \cite[2.2.6]{CDet} for the case of the descent model structures): 
\[ L\gamma^*:\DAeff(S)\rightleftarrows \DMeff(S):\gamma_*\ .\]
\[ L\gamma^*:\DA(S)\rightleftarrows  \DM(S):R\gamma_*\ .\]

The notation reflects:
\begin{lemma}
The functor $\gamma^*$ on motivic complexes preserves $\A^1$-equivalences.
\end{lemma}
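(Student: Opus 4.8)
The plan is to deduce the statement from the fact that $\gamma^*\dashv\gamma_*$ is already a Quillen adjunction for the descent model structures presenting $D(\Sh_\et(\Sm))$ and $D(\Sh_\et(\Sm^\tr))$ (see \cite[2.2.6]{CDet}), together with the universal property of the $\A^1$-Bousfield localization. Recall that the $\A^1$-local model structures on both sides are left Bousfield localizations of these descent structures, obtained by inverting the projections $\Q(\A^1\times X)\to\Q(X)$ (respectively $\Q(\A^1\times X)^\tr\to\Q(X)^\tr$) for $X\in\Sm$. For such a localization the standard criterion is that the Quillen adjunction descends provided the right adjoint $\gamma_*$ carries $\A^1$-local objects to $\A^1$-local objects; this in turn yields, by Ken Brown's lemma, that $\gamma^*$ sends $\A^1$-equivalences between cofibrant complexes to $\A^1$-equivalences, which is exactly what is needed to define $L\gamma^*\colon\DAeff(S)\to\DMeff(S)$.

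So the heart of the argument is to check that $\gamma_*$ preserves $\A^1$-locality. Here I would use that $\gamma_*$ is simply restriction along $\Sm\to\Sm^\tr$: it forgets the transfer structure but leaves the underlying presheaf of $\Q$-vector spaces on $\Sm$ unchanged. In particular $\gamma_*$ is exact, preserves \'etale descent, and satisfies $R\Gamma_\et(X,\gamma_* E)=R\Gamma_\et(X,E)$ for every complex with transfers $E$ and every $X\in\Sm$. Since an object $E$ is $\A^1$-local precisely when the projection $\A^1\times X\to X$ induces an isomorphism $R\Gamma_\et(X,E)\isom R\Gamma_\et(\A^1\times X,E)$ for all $X\in\Sm$, this condition is literally unchanged under $\gamma_*$; hence $\gamma_* E$ is $\A^1$-local whenever $E$ is.

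Equivalently, and this is how I would phrase the conclusion, one can argue by orthogonality: a morphism $f$ of cofibrant complexes without transfers is an $\A^1$-equivalence if and only if $\Hom(f,E)$ is an isomorphism in the homotopy category for every $\A^1$-local $E$. By the adjunction $L\gamma^*\dashv\gamma_*$, and since $f$ is between cofibrant objects so that $L\gamma^* f=\gamma^* f$, we have $\Hom(\gamma^* f,E^\tr)\isom\Hom(f,\gamma_* E^\tr)$ for every $\A^1$-local $E^\tr$ with transfers. By the previous paragraph $\gamma_* E^\tr$ is again $\A^1$-local, so the right-hand map is an isomorphism; therefore $\gamma^* f$ is an $\A^1$-equivalence. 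On representable generators the compatibility is automatic: by Lemma~\ref{lemma with without} we have $\gamma^*\Q(X)=\Q(X)^\tr$, so $\gamma^*$ carries the generating $\A^1$-equivalence $\Q(\A^1\times X)\to\Q(X)$ to its analogue with transfers.

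The step I expect to be the main (if mild) obstacle is the verification that $\gamma_*$ preserves $\A^1$-locality, or dually that it preserves fibrancy in the localized structures: although it reduces to the clean observation that $\gamma_*$ does not alter sections over smooth schemes, one must check that the identification $R\Gamma_\et(X,\gamma_* E)=R\Gamma_\et(X,E)$ genuinely holds at the derived level and is compatible with the $\A^1$-local fibrant replacement. Once this is in place, the descent of the Quillen adjunction, and hence the preservation of $\A^1$-equivalences between cofibrant objects, is formal.
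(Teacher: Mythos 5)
Your argument only establishes that $\gamma^*$ sends $\A^1$-equivalences \emph{between cofibrant objects} to $\A^1$-equivalences --- you impose this restriction explicitly in both formulations of your conclusion (``between cofibrant complexes'', ``since $f$ is between cofibrant objects so that $L\gamma^*f=\gamma^*f$''). But the lemma carries no cofibrancy hypothesis, and the cofibrant case is not its content: that case is just Ken Brown's lemma applied to the Quillen adjunction which the paper has already recorded, with the citation to \cite[2.2.6]{CDet}, in the sentence immediately preceding the lemma. The point of the lemma is precisely the non-cofibrant case. As noted in Appendix \ref{app_transfers}, $\gamma^*$ is a priori only right exact, so there is no reason for it to preserve quasi-isomorphisms on the nose; since (\'etale-local) quasi-isomorphisms are in particular $\A^1$-equivalences, the lemma asserts among other things that $\gamma^*$ of an \emph{arbitrary} quasi-isomorphism of complexes of sheaves of $\Q$-vector spaces is still an $\A^1$-equivalence of complexes with transfers. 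Your orthogonality step cannot reach this: the identification $\Hom(\gamma^*f,E^\tr)\isom\Hom(f,\gamma_*E^\tr)$ of derived Hom's is valid only when $\gamma^*f$ computes $L\gamma^*f$, i.e.\ exactly under the cofibrancy restriction you impose; for general $f$ the two sides genuinely differ, because $\gamma^*$ is not exact.

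This gap also cannot be filled by more careful model-category bookkeeping. Apart from the agreement of cohomology computed with and without transfers (which you invoke to see that $\gamma_*$ preserves $\A^1$-local objects, and which is a theorem rather than the tautology you present it as --- but which in any case only feeds into the cofibrant case), nothing in your argument uses the \'etale topology or rational coefficients; a purely formal proof would therefore apply verbatim to the Nisnevich topology or to integral coefficients, where no such statement is available and the relation between motives with and without transfers is known to be much more delicate. The paper's proof is correspondingly non-formal: it consists in adapting the proof of \cite[Lemme 2.111]{Ayoub_Galois_1}, which treats arbitrary (non-cofibrant) complexes and makes essential use of the specific features of \'etale sheaves of $\Q$-vector spaces. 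What survives from your proposal is the correct, but already known, descent of the Quillen adjunction to the $\A^1$-localizations --- for which the quickest argument is simply that $\gamma^*$ takes the generating maps $\Q(\A^1\times X)\to\Q(X)$ to $\Q(\A^1\times X)^\tr\to\Q(X)^\tr$ --- and hence only the weaker, cofibrant form of the statement.
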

\begin{proof}
The proof of \cite[Lemme 2.111]{Ayoub_Galois_1} also works in our setting.
\end{proof}
 
\begin{theorem}{\cite[Theorem 16.2.18]{CD} and \cite[Theorem 16.1.4]{CD}} \label{comp_transfers}
Let $S$ be noetherian, finite dimensional, excellent and geometrically unibranch. Then the functor 
\[ L\gamma^*:\DA(S)\rightleftarrows  \DM(S):\gamma_*\ .\]
is an equivalence of categories.
\end{theorem}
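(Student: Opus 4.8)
The statement is quoted verbatim from \cite[Theorems 16.1.4, 16.2.18]{CD}, so the cleanest route is simply to invoke those results; but let me sketch the structure of the argument one would give to establish it. The plan is to realize the equivalence as a composite of two comparisons passing through the intermediate category $\DM_h(S,\Q)$ of $h$-motives (equivalently, $\qfh$-motives) with rational coefficients. First I would record the formal structure: the functor $L\gamma^*$ is monoidal, commutes with infinite suspension, and by Lemma~\ref{lemma with without}(\ref{transfers on X}) sends the representable generator $M_S(X)$ to $M_S(X)^\tr$ while commuting with Tate twists (this uses that representable objects are cofibrant, Remark~\ref{rem:repr_cofibrant}). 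Since $\DA(S)$ and $\DM(S)$ are compactly generated by the respective families $\{M_S(X)(i)\}$, and $L\gamma^*$ is a left adjoint carrying one family of compact generators onto the other, it suffices to prove that $L\gamma^*$ is fully faithful on these generators; essential surjectivity is then automatic. Concretely, one must show that
\[ \Hom_{\DA(S)}(M_S(X), M_S(Y)(i)[j]) \longrightarrow \Hom_{\DM(S)}(M_S(X)^\tr, M_S(Y)^\tr(i)[j]) \]
is an isomorphism for all smooth $X, Y$ and all $i, j \in \Z$.

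To analyze this map I would break it according to the factorization $\DA(S) \to \DM_h(S,\Q) \to \DM(S)$. The first comparison, $\DA(S) \simeq \DM_h(S,\Q)$, expresses that with rational coefficients the \'etale and $h$-topologies yield the same motives; its content is that \'etale $\Q$-local objects already satisfy $h$-descent, which follows from a trace argument for finite surjective morphisms of the kind used in Lemma~\ref{specialgamma} (see \cite[Theorem 3.4.1]{VHS}). The second comparison, $\DM_h(S,\Q) \simeq \DM(S)$, is the heart of the matter and rests on Theorem~\ref{SVqfh}: the representable $\qfh$-sheaf $\Q(X)_\qfh$ restricts on $\Sm/S$ to the representable sheaf with transfers $\gamma_*\Q(X)^\tr$, so that the abelian categories of $\qfh$-sheaves and of \'etale sheaves with transfers are rationally equivalent, compatibly with tensor products and with the representables. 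This equivalence of abelian premotivic categories then promotes to the triangulated, $\A^1$-stable level.

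Finally I would verify that the composite equivalence is indeed implemented by the adjunction $(L\gamma^*, \gamma_*)$, by checking on representables that $L\gamma^* M_S(X) = M_S(X)^\tr$ corresponds to $\Q(X)_\qfh$ and that the unit transformation agrees with the canonical map $\Q(X) \to \Q(X)_\qfh|_\Sm$ of Lemma~\ref{specialgamma}(\ref{gamma_qfh}); naturality and monoidality then pin down the comparison on all generators. The main obstacle is the second comparison, and in particular the precise point at which the hypothesis that $S$ be geometrically unibranch (and excellent) is indispensable: it is exactly what makes the trace argument available, forcing the restriction to $\Sm/S$ of an \'etale $\Q$-sheaf to agree with that of its $\qfh$-sheafification and thereby endowing \'etale $\Q$-sheaves with canonical transfers. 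Without this hypothesis the counit $L\gamma^*\gamma_* \to \id$ need not be invertible. Since all of this is carried out in full in \cite[\S 16]{CD}, I would ultimately deduce the statement from \cite[Theorems 16.1.4, 16.2.18]{CD} rather than reprove it.
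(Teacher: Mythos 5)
The paper offers no proof of this statement at all: it is imported verbatim from \cite[Theorems 16.1.4 and 16.2.18]{CD}, and your proposal ultimately does exactly the same thing, deferring to those results. Your supplementary sketch of the internal structure of the argument in \cite{CD} (compact generation, reduction to representable generators, passage through $h$/$\qfh$-motives via Theorem~\ref{SVqfh}) is a reasonable outline of that proof, though not required here --- the citation \emph{is} the paper's proof, and it is yours as well.
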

\begin{remark}
\begin{enumerate}
\item There is an integral version of this in \cite[Th\'eor\`eme~B.1]{Ayoubet} (there the functor $L\gamma^*$ is denoted $La_{\tr}$.)
\item In the effective case, the optimal comparison result is not known. For the state of the art in the effective case, see \cite[Annexe B.]{Ayoub_Galois_1} (for a base of characteristic 0) and \cite[Theorem 3.19]{Vezzani} (a weaker version over a perfect base).
\end{enumerate}
\end{remark}

\begin{definition} Let $X$ be a smooth $S$-scheme.
\begin{enumerate}
\item The {\em (relative, homological) effective motive with transfers} $M_S^\eff(X)^\tr$ is defined as the image of $\Q(X)^\tr$ in $\DMeff(S)$.
\item The {\em motive with transfers} $M_S(X)^\tr$ is defined as $L\Sigma^\infty_{\tr} M_S^\eff(X)^\tr$ in $\DM(S)$.
\end{enumerate}
\end{definition}

We collect some useful computational properties:
\begin{lemma}
\label{lemma_transfers}
The following statements hold:
\begin{enumerate}
\item \label{transfers_kunn}  All functors in the sequence (\ref{functors2}) are monoidal.
\item \label{suspension_transfers}There is a functorial $2$-isomorphism $L\gamma^*\Sigma^\infty\simeq \Sigma^\infty_\tr L\gamma^*$.
\item \label{transfer on motives} For all $X\in \Sm$, we have  $L\gamma^* M_S(X)\simeq M_S(X)^\tr$.
\item \label{transfers_tensor} In both the effective and non-effective case, the functor $L\gamma^*$ is monoidal.
\end{enumerate}
\end{lemma}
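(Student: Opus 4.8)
The plan is to read all four assertions as instances of the compatibilities built into the formalism of monoidal premotivic categories and their morphisms (\cite{CD}), and to reduce the two parts that are not purely structural to the abelian-level computations of Lemma~\ref{lemma with without}. I would settle (\ref{transfers_kunn}) and (\ref{suspension_transfers}) first, and then deduce (\ref{transfer on motives}) and (\ref{transfers_tensor}) from them.

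For (\ref{transfers_kunn}) I would argue exactly as for the transfer-free sequence in Lemma~\ref{lemma_motives}(\ref{tensor_kunn}). The monoidal structure on $\Sh_\et(\Sm^\tr)$ is characterized by $\Q(X)^\tr\otimes\Q(Y)^\tr=\Q(X\times Y)^\tr$; since we work with $\Q$-coefficients the tensor product is exact, so it passes to $D(\Sh_\et(\Sm^\tr))$, while the $\A^1$-localization to $\DMeff(S)$ and the infinite suspension $L\Sigma^\infty_\tr$ are monoidal by the construction of the monoidal premotivic category $\DM$ in \cite[\S 11]{CD} (equivalently one invokes the transfers analogue of \cite[Corollaire 4.3.72]{Ayoub_these_2}). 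For (\ref{suspension_transfers}), the key input, recalled in the proof of Lemma~\ref{lemma with without}, is that $\gamma^*$ is a morphism of monoidal $\mathcal{P}$-fibered categories in the sense of \cite[Definition 1.2.7]{CD}. Such a morphism is right exact and sends the Tate object to the Tate object, since $\gamma^*T$ is the cokernel of $\gamma^*\Q(S)\to\gamma^*\Q(\G_{m,S})$, i.e. of $\Q(S)^\tr\to\Q(\G_{m,S})^\tr$, by Lemma~\ref{lemma with without}(\ref{transfers on X}); it therefore commutes with the associated infinite suspension up to a canonical $2$-isomorphism, which is part of the general compatibility of premotivic morphisms with stabilization.

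For (\ref{transfer on motives}), the effective statement is immediate: representables are cofibrant (Remark~\ref{rem:repr_cofibrant}), so $L\gamma^*$ may be computed on $\Q(X)$ by the underived $\gamma^*$, and $\gamma^*\Q(X)=\Q(X)^\tr$ by Lemma~\ref{lemma with without}(\ref{transfers on X}), giving $L\gamma^*M_S^\eff(X)\simeq M_S^\eff(X)^\tr$; the non-effective statement then follows by applying (\ref{suspension_transfers}) to $M_S(X)=L\Sigma^\infty M_S^\eff(X)$. For (\ref{transfers_tensor}), monoidality of $L\gamma^*$ descends from the strong monoidality of $\gamma^*$ at the sheaf level (Lemma~\ref{lemma with without}(\ref{transfers monoidal})): $\gamma^*$ is a monoidal left Quillen functor for the chosen monoidal model structures, so its left derived functor is lax monoidal, and the comparison map $L\gamma^*M\otimes L\gamma^*N\to L\gamma^*(M\otimes N)$ is an isomorphism because it is so on the generating representables, where it is the identity $\Q(X)^\tr\otimes\Q(Y)^\tr=\Q(X\times Y)^\tr$, and both sides commute with homotopy colimits and, via (\ref{suspension_transfers}), with $L\Sigma^\infty_\tr$.

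I expect the only genuine content to lie in (\ref{transfers_tensor}): one must check that cofibrant replacement interacts well enough with the tensor product for the lax comparison map of the derived functor to be invertible. This is exactly where the use of a monoidal model structure whose cofibrant objects include the representables is essential, and where the reduction from arbitrary objects to representables, via resolutions and compatibility with homotopy colimits, must be carried out with care; the remaining parts are bookkeeping within the premotivic formalism of \cite{CD}.
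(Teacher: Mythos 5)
Your proposal is correct and follows essentially the same route as the paper: parts (\ref{transfers_kunn}) and (\ref{suspension_transfers}) via the monoidal $\mathcal{P}$-fibered/premotivic machinery of \cite{CD} (the paper cites \cite[Proposition 10.1.2]{CD}, \cite[Corollary 10.3.11]{CD} and \cite[5.3.28]{CD} for exactly the compatibilities you unpack), part (\ref{transfer on motives}) from cofibrancy of representables together with Lemma \ref{lemma with without}(\ref{transfers on X}) and (\ref{suspension_transfers}), and part (\ref{transfers_tensor}) as a formal consequence of strong monoidality of $\gamma^*$ at the sheaf level. The only cosmetic difference is that the paper verifies (\ref{transfer on motives}) directly at the spectrum level while you do the effective case and then suspend, and your generators-and-homotopy-colimits check in (\ref{transfers_tensor}) is an expanded version of what the paper dismisses as formal.
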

\begin{proof}
Assertion (\ref{transfers_kunn}) follows from \cite[Proposition 10.1.2]{CD} and the general machinery of monoidal $\mathcal{P}$-fibered categories of loc. cit., together with the fact that the tensor product of sheaves with transfers with $\Q$-coefficients is exact.

Assertion (\ref{suspension_transfers}) is contained in \cite[Corollary 10.3.11]{CD} and \cite[5.3.28]{CD}.

For Assertion (\ref{transfer on motives}), the motivic spectrum $\Sigma^\infty\Q(X)$ is cofibrant for the model structure on motivic spectra by \ref{rem:repr_cofibrant}. Moreover the functor $\gamma^*$ on motivic spectra is left Quillen (this is also in \cite[Corollary 10.3.11]{CD} and \cite[5.3.28]{CD}) and commutes with suspensions by (\ref{suspension_transfers}). To conclude we apply Lemma \ref{lemma with without} (\ref{transfers on X}).

Assertion (\ref{transfers_tensor}) is a formal consequence of Lemma \ref{lemma with without} (\ref{transfers monoidal}).

\end{proof}

\section{$\qfh$-descent for smooth group schemes}\label{app_qfh}

Let $S$ be a noetherian excellent scheme. In this section we study the qfh sheafification of smooth commutative group schemes over $S$.

\begin{remark}\label{qfh_normal}
Let $\Nor$ be the full subcategory of $S$-schemes of finite type which are normal. Let $T$ be an $S$-scheme of finite type. Then there is a normal $S$-scheme $T'$ and a finite surjective morphism (hence a qfh cover) $T'\to T$: take the normalisation of the reduction of the union of irreducible components of $T$. In particular the subsite $\Nor$ equipped with the induced qfh topology is dense in $\Sch$, and by \cite[Expos\'e III Th\'eor\`eme 4.1]{SGA4_I}  the inclusion induces an equivalence of topoi.
\end{remark}

Let $G/S$ a smooth group scheme and $\ul{G}$ be the \'etale sheaf on $\Sch/S$ defined by $G$.
The main result of this appendix is the following.

\begin{prop}\label{prop:qfh-descent}
Let $G$ be a smooth commutative group scheme over $S$. 
Then $\ul{G}_\Q|_\Nor$ is a qfh sheaf, and the natural morphism
\[
\ul{G}_\Q\rightarrow (\ul{G}_\Q)_\qfh
\]
induces isomorphisms when evaluated on normal schemes.
\end{prop}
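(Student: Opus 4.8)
The plan is to reduce the statement about the \'etale sheaf $\ul{G}_\Q|_\Nor$ to a statement about $\qfh$-covers of normal schemes, and then to exploit the fact that over a normal scheme $\qfh$-covers are controlled by finite surjective morphisms between normal schemes, which are generically \'etale (in characteristic zero) or can be handled by a trace/transfer argument with $\Q$-coefficients. Concretely, since by Remark~\ref{qfh_normal} the subsite $\Nor$ with the induced $\qfh$-topology is dense in $\Sch/S$ and the inclusion induces an equivalence of topoi, it suffices to check the sheaf axiom for $\ul{G}_\Q|_\Nor$ with respect to $\qfh$-covers $\{U_i\to T\}$ with $T$ normal. Standard dévissage reduces such a cover to two cases: \'etale covers, for which the sheaf condition is automatic since $\ul{G}$ is already an \'etale sheaf and sheafification with $\Q$-coefficients is exact in the relevant sense; and a single finite surjective morphism $T'\to T$ with $T$, $T'$ normal (and one may further reduce to $T'\to T$ with $T'$ integral, then to the case where $T'\to T$ is the normalisation in a finite field extension of the function field).

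The heart of the argument is therefore the following descent statement: for a finite surjective morphism $p:T'\to T$ of normal integral schemes, the sequence
\[
0\to \ul{G}_\Q(T)\to \ul{G}_\Q(T')\rightrightarrows \ul{G}_\Q(T'\times_T T')
\]
is exact. Injectivity: a section of $G$ over $T$ that vanishes after pullback to $T'$ vanishes, because $T'\to T$ is surjective and $T$ is reduced (separatedness of $G/S$ is not even needed here once we work with the underlying map of sets, and over reduced schemes a morphism to a separated scheme agreeing with $e_G$ on a dense set is $e_G$ — but more simply, a $T$-point killed after a surjective base change is killed). Exactness in the middle is where normality and $\Q$-coefficients enter: given $g'\in G(T')$ whose two pullbacks to $T'\times_T T'$ agree, one wants to descend it to $G(T)\otimes\Q$. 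The strategy is to use the norm/corestriction: after possibly enlarging $T'$ one may assume $T'\to T$ is finite of some constant degree $n$ generically, then define the candidate descent as $\tfrac1n$ times the "trace" of $g'$, i.e. the sum of the Galois conjugates (after passing to a Galois closure, which is again a finite cover of normal schemes), using the group law on $G$. The equalizer condition guarantees this trace is Galois-invariant, hence by \'etale (Galois) descent over the locus where the cover is \'etale it comes from a section of $\ul{G}_\Q$ over a dense open of $T$; since $T$ is normal and $\ul{G}_\Q|_\Nor$ is already an \'etale sheaf, one extends over all of $T$ by a purity/normality argument (a section of $G$ over a dense open of a normal scheme extending across the complement — here one may again invoke the trace argument of \cite[Theorem~3.4.1]{VHS} as in the proof of Lemma~\ref{specialgamma}, which is precisely the mechanism that makes $\qfh$- and \'etale cohomology agree with $\Q$-coefficients on normal schemes). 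Finally, the last clause — that $\ul{G}_\Q\to(\ul{G}_\Q)_\qfh$ is an isomorphism on normal schemes — follows formally: once $\ul{G}_\Q|_\Nor$ is known to be a $\qfh$-sheaf, it equals its own $\qfh$-sheafification on $\Nor$, and the natural map is the identity there.

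\textbf{Main obstacle.} The delicate point is the descent along a genuinely non-\'etale finite surjective cover $T'\to T$: the trace construction produces the right candidate section only after inverting the degree (hence the essential use of $\Q$-coefficients and the absence of an integral statement), and one must check that the resulting section, a priori defined only on the \'etale locus of the cover, genuinely extends to all of $T$ — this is exactly the content of the trace argument for $\Q$-coefficient \'etale cohomology of normal schemes (\cite[Theorem~3.4.1]{VHS}), and reassembling the general $\qfh$-cover from \'etale covers plus single finite surjective steps (the dévissage of the $\qfh$-topology on $\Nor$) requires some care but is standard once the two building-block cases are in hand.
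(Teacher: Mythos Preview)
Your overall d\'evissage---reducing to a single finite surjective map $p:T'\to T$ between normal integral schemes---matches the paper (which refines an arbitrary $\qfh$-cover via \cite[Lemma~10.3]{SV1} to a Zariski cover followed by a finite surjective map, and uses the separation Lemma~\ref{lemma:qfh-separation} to splice the two cases). But the core descent step has two genuine gaps.

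The more serious one is that you never treat the case where $p$ is generically \emph{purely inseparable}. Your mechanism (pass to a Galois closure, sum conjugates, descend over the \'etale locus) simply does not apply: there is no Galois group to average over, and the \'etale locus of such a $p$ is empty, so there is nothing to extend from. The paper handles this by an entirely different device (Lemma~\ref{lemma:finite}): one reduces to $p=\Frob^{(q)}_{T'/S}$ and uses the Verschiebung $\Ver^{(q)}_{G/S}$ of the flat commutative group scheme $G$, with $\Ver^{(q)}\circ\Frob^{(q)}=q\cdot\id_G$, to exhibit a preimage of $q\cdot g'$ in $G(T)$. This is where $\Q$-coefficients and the group structure of $G$ are genuinely used, and it is invisible from the trace heuristic. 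Secondly, even in the generically Galois case your ``extend from a dense open by normality'' step is false as stated: sections of $G$ over a dense open of a normal scheme do \emph{not} extend in general (take $G=\G_a$ and $1/x$ on $\A^1\smallsetminus\{0\}$), and \cite[Theorem~3.4.1]{VHS} is a cohomological comparison, not an extension principle for sections. The paper avoids this entirely: using Lemma~\ref{lemma:qfh-separation} it checks at the generic point that the equalizer condition forces $g'\in G(T')$ to be $\Gamma$-invariant, and then invokes the algebraic fact that for $T'$ normal and $T'\to T$ generically Galois with group $\Gamma$, the scheme $T$ is the categorical quotient $T'/\Gamma$, so that $G(T')^\Gamma=G(T)$ on the nose---no restriction to an open and no trace are needed.
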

The second statement is a reformulation of the first by Remark \ref{qfh_normal}.
The proof of the fact that $\ul{G}_\Q|_\Nor$ is a qfh sheaf will take the rest of this appendix.

\begin{lemma}\label{lemma:qfh-separation}
Let $f:X\to Y$ be a dominant morphism with $Y$ reduced. Then 
$\ul{G}(Y)\to\ul{G}(X)$ is injective. In particular the presheaf $\ul{G}|_\Nor$ is separated
with respect to the $\qfh$-topology.
\end{lemma}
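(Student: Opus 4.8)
The plan is to turn the injectivity into the vanishing of a single section and then to pin down its zero locus scheme-theoretically using that $Y$ is reduced. Since $\ul{G}$ is a sheaf of abelian groups, the restriction map $\ul{G}(Y)\to\ul{G}(X)$ along $f$ is a group homomorphism, so it is injective precisely when its kernel is trivial. A kernel element is a morphism $h\in\ul{G}(Y)=\Mor_S(Y,G)$ with $h\circ f=e$, where $e$ denotes the trivial section $Y\to S\xrightarrow{e_G}G$ and $e_G\colon S\to G$ is the unit section. Thus it suffices to show that any such $h$ equals $e$.

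Next I would consider the equalizer of $h$ and $e$, i.e. the fibre product $E:=Y\times_{h,\,G,\,e_G}S=h^{-1}(e_G(S))$. The unit section of a group scheme is an immersion; more precisely, by Lemma~\ref{lemma:connected_comp} the neutral component $G^0$ is open in $G$, and $e_G$ factors through $G^0$ as a \emph{closed} immersion because $G^0/S$ is separated (\cite[Expos\'e VI$_B$ Corollaire 5.5]{SGA3}). Hence $E$ is closed in the open subscheme $h^{-1}(G^0)$, so $E\hookrightarrow Y$ is a locally closed immersion. By the universal property of the fibre product $f$ factors through $E$, and since $f$ is dominant $E$ is dense in $Y$; being locally closed and dense, $E$ is a dense \emph{open} subscheme of $Y$, as it is open in its closure, which is all of $Y$.

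It remains to upgrade this to the scheme-theoretic equality $h=e$, and here I would invoke reducedness: for $Y$ reduced and $E\subseteq Y$ a dense open, the immersion $E\hookrightarrow Y$ is schematically dominant, since a local function vanishing on the dense set $E$ vanishes. I would first reduce to $Y$ integral, as a morphism from the reduced $Y$ to the scheme $G$ is determined by its restrictions to the integral irreducible components, each of which still receives a dominant morphism from the corresponding part of $X$. For $Y$ integral, $E$ contains the generic point, so $h$ sends the generic point into $e_G(S)\subseteq G^0$, whence $h$ maps into the open subscheme $G^0$, where the target is separated; then $E$ is \emph{closed} in $h^{-1}(G^0)=Y$ and dense, forcing $E=Y$ and $h=e$. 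Finally the ``in particular'' is immediate: a $\qfh$ cover of a normal (hence reduced) scheme $Y$ is a jointly surjective, in particular dominant, family, so the displayed injectivity yields $\qfh$-separation of $\ul{G}|_\Nor$.

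The delicate step is the last one, namely ensuring that $h$ does not leave the separated neutral component $G^0$. When $G$ is not separated the unit section is only an immersion, and two morphisms agreeing on a dense open need not agree in general; what rescues the argument is that $h$ is controlled on the separated $G^0$ (where the equalizer is closed) while the induced map to the \'etale quotient $\pi_0(G/S)$ is handled by unramifiedness, its unit section being an open immersion. Keeping track of both pieces along the short exact sequence $0\to G^0\to G\to\pi_0(G/S)\to 0$ of Lemma~\ref{lemma:connected_comp} is the heart of the proof, and the interplay between separatedness of $G^0$ and reducedness of $Y$ is the point I expect to require the most care.
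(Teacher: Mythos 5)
The reductions in your first two paragraphs are sound: it suffices to show that a section $h\in\ul{G}(Y)$ with $h\circ f=e$ equals $e$; the equalizer $E=Y\times_{h,G,e_G}S$ is locally closed in $Y$ (closed in $h^{-1}(G^0)$, since $G^0$ is separated); $f$ factors through $E$, so $E$ is dense, hence a dense open subscheme of the reduced $Y$; and one may reduce to $Y$ integral. The argument breaks at the step ``$E$ contains the generic point \ldots\ whence $h$ maps into the open subscheme $G^0$''. From $h(\eta)\in e_G(S)\subseteq G^0$ you can only conclude $h(Y)=h(\overline{\{\eta\}})\subseteq\overline{\{h(\eta)\}}$, and the closure of a point of $G^0$ need not stay inside $G^0$: the subscheme $G^0$ is open but not stable under specialization. (In a N\'eron model of an elliptic curve with bad reduction, a section lying in $G^0$ over the generic point typically specializes into a non-identity component of the special fibre.) What this step actually requires is $\overline{e_G(S)}\subseteq G^0$, i.e.\ that the unit section be closed in $G$ --- equivalently, that $G$ be separated, which is exactly the hypothesis you set out to avoid.

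Moreover, no bookkeeping along $0\to G^0\to G\to\pi_0(G/S)\to 0$ can repair this, because without separatedness the statement is false. Let $R$ be a discrete valuation ring with fraction field $K$, and let $G$ be obtained by glueing two copies of $\Spec\,R$ along their common generic point $\Spec\,K$. This is a smooth (indeed \'etale) commutative non-separated group scheme of finite type over $S=\Spec\,R$, with trivial generic fibre and special fibre $\Z/2$. It has exactly two sections over $Y=\Spec\,R$ (the unit section and the section through the second copy), and both restrict to the unique $K$-point; hence $\ul{G}(Y)\to\ul{G}(\Spec\,K)$ is not injective, although $\Spec\,K\to Y$ is dominant with $Y$ integral. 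Your fallback for $\pi_0$ has the same defect: the preimage under $p_G\circ h$ of the open unit section of $\pi_0(G/S)$ is open and dense, but in this example it is exactly $\Spec\,K\neq Y$. So your (correct) instinct that non-separatedness is a genuine issue should lead to the opposite conclusion: separatedness of $G$ must be used, and the paper's proof does so in two lines --- $f$ is schematically dominant because $Y$ is reduced \cite[Proposition 11.10.4]{EGAIV_3}, and two morphisms to a separated scheme agreeing after composition with a schematically dominant morphism coincide \cite[Proposition 11.10.1 d)]{EGAIV_3}; the qfh statement then follows from \cite[Proposition 3.1.4]{VHS}. If you grant closedness of $e_G(S)$ in $G$, your own argument also closes up, since $E$ is then closed, dense, and hence (by reducedness of $Y$) all of $Y$.
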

\begin{proof}
  The morphism $f$ is schematically dominant since $Y$ is reduced \cite[Proposition 11.10.4]{EGAIV_3}. Let $g:Y\rightarrow G$ such that $g\circ f=0\in \ul{G}(X)$. Since the scheme $G$ is separated, we can now apply \cite[Proposition 11.10.1 d)]{EGAIV_3} to $g$ and the constant zero morphism $X\rightarrow G$ which shows that $g=0\in \ul{G}(Y)$. The $\qfh$-separation property then follows from \cite[Proposition 3.1.4]{VHS}. 
\end{proof}

\begin{lemma}\label{lemma:finite}
Let $\pi:U\to X$ be a finite surjective morphism of normal irreducible
schemes. Then $\ul{G}_\Q$ satisfies the sheaf condition for the qfh cover~$\pi$.
\end{lemma}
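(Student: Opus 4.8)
The statement is a qfh-descent (i.e. sheaf) condition for the presheaf $\ul{G}_\Q$ along a finite surjective map $\pi : U \to X$ of normal irreducible schemes. I would first reduce to a more tractable cover. A finite surjective morphism of normal irreducible schemes can be refined: by passing to the normalization of $X$ in a Galois closure of the (generically separable part of the) function field extension, and using that $\ul{G}_\Q$ is a presheaf of $\Q$-vector spaces so that purely inseparable phenomena are invisible (a trace/transfer argument, as in \cite[Theorem 3.4.1]{VHS} used in Lemma~\ref{specialgamma}), it suffices to check the sheaf condition for a cover $\pi : U \to X$ which is \emph{finite and generically Galois} with group $\Gamma$, i.e. $U \to X$ becomes a $\Gamma$-torsor over a dense open of $X$. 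Separation is already known (Lemma~\ref{lemma:qfh-separation}), so only the gluing/exactness at the middle term needs proof.

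\textbf{Key steps.} For a generically Galois cover with group $\Gamma$, the Cech nerve satisfies $U\times_X U$ birational to $\coprod_{\gamma\in\Gamma} U$, so one wants the sequence
\[
0 \to \ul{G}_\Q(X) \to \ul{G}_\Q(U) \to \ul{G}_\Q(U)^{\oplus\Gamma}
\]
to be exact, i.e. $\ul{G}_\Q(X) = \ul{G}_\Q(U)^{\Gamma}$, the $\Gamma$-invariants. One containment is clear. For the other: given a $\Gamma$-invariant morphism $g : U \to G$, I want to descend it to $X$. Here I would use that $G$ is a scheme, smooth and separated over $S$: a $\Gamma$-invariant map $U \to G$ is the same as a map from the categorical quotient. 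Over the dense open $X^\circ \subseteq X$ above which $U \to X$ is a $\Gamma$-torsor, fppf (indeed étale) descent of the affine-over-the-target pieces of $G$ gives a unique descent $g^\circ : X^\circ \to G$. It then remains to extend $g^\circ$ over all of $X$. This is where normality of $X$ is essential: a rational map from a normal scheme to a separated scheme is defined in codimension one obstructions only, and for group schemes one can use that $\ul G(X)\to \ul G(X^\circ)$ is injective (Lemma~\ref{lemma:qfh-separation} again) together with a valuative/purity argument — concretely, the graph of $g^\circ$ closes up, and after tensoring with $\Q$ any remaining finite-index obstruction (e.g. from the generic inseparability degree absorbed into a trace) disappears. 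So $g^\circ$ extends to $g_X : X \to G$ with $g_X \circ \pi = g$.

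\textbf{Assembling.} Combining the generic Galois descent with the extension-over-$X$ step yields $\ul{G}(U)^{\Gamma} = \ul{G}(X)$ after $\otimes\Q$, which together with Lemma~\ref{lemma:qfh-separation} gives the full sheaf condition for the single finite surjective cover $\pi$. One should be slightly careful that the qfh sheaf condition for a finite surjective cover $\pi$ is exactly exactness of $\ul G_\Q(X)\to \ul G_\Q(U)\rightrightarrows \ul G_\Q(U\times_X U)$; here one uses that $U\times_X U$ is again (a finite disjoint union of schemes, generically) normal, handled by normalizing, which is where the reduction to $\Q$-coefficients pays off once more.

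\textbf{Main obstacle.} The delicate point is the passage from descent over the torsor locus $X^\circ$ to descent over the whole normal scheme $X$: one must control what happens over the (codimension $\geq 1$) ramification locus and over the inseparable part of the residue extensions. The clean way is to combine (i) injectivity of restriction to a dense open (Lemma~\ref{lemma:qfh-separation}), (ii) the valuative criterion of separatedness for $G$ to handle codimension-one points, and (iii) the trace argument of \cite[Theorem 3.4.1]{VHS} to kill the inseparable degree after inverting it — i.e. exactly the ingredients already assembled elsewhere in the paper. Everything else is formal manipulation of Cech nerves and invariants.
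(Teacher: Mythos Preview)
Your overall strategy---factor the cover into a generically Galois part and a generically purely inseparable part, and treat each separately---matches the paper's. But both halves of your argument have genuine gaps.

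\textbf{Galois case.} You descend the $\Gamma$-invariant map $g:U\to G$ to the torsor locus $X^\circ$ and then try to \emph{extend} $g^\circ:X^\circ\to G$ across the ramification locus using normality of $X$ and the valuative criterion. This does not work: the valuative criterion of separatedness gives only uniqueness of extensions, not existence, and $G$ is typically not proper (already $G=\G_m$, $X=\A^1$, $X^\circ=\G_m$ shows that a map on a dense open of a normal scheme need not extend). The information you are throwing away is that $g$ is defined on \emph{all} of $U$, not just over $X^\circ$. The paper exploits this directly: since $X$ is normal, $\mathcal{O}_X\to(\pi_*\mathcal{O}_U)^\Gamma$ is an isomorphism, so $X$ is the categorical quotient $U/\Gamma$ (SGA1, Exp.~V, Prop.~1.3), and hence any $\Gamma$-invariant morphism $U\to G$ factors through $X$ with no extension argument needed.

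\textbf{Inseparable case.} You dismiss this by invoking the trace argument of \cite[Theorem 3.4.1]{VHS}. That result, as used in Lemma~\ref{specialgamma}, concerns exactness of restriction of qfh-exact complexes to $\Sm$ via a trace for finite surjective maps to normal schemes; it does not directly give you that $\ul{G}_\Q(X)\to\ul{G}_\Q(U)$ is surjective for a purely inseparable $\pi$. There is no \'etale trace here, and $\pi$ need not be flat. The paper's argument is specific to commutative group schemes: one reduces (via \cite[Prop.~6.6]{Kollar}) to the case where $\pi$ is a relative Frobenius $\Frob^{(q)}$, and then uses the Verschiebung $\Ver^{(q)}_{G/S}$ (which exists because $G$ is a flat commutative $S$-group) to produce, for $f\in G(U)$, an element $g=\Ver^{(q)}_{G/S}\circ f^{(q)}\in G(U^{(q)})$ with $\pi^*g=q\cdot f$. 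Inverting $p$ then gives surjectivity. This step genuinely uses the group structure of $G$ and is not a formal trace.
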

\begin{proof}
Let $V$ be the normalisation of the reduction of $U\times_X U$. We have to show that the sequence
\[ 0\to G(X)\tensor\Q\to G(U)\tensor\Q\to  G(V)\tensor\Q\]
is exact (the map $G(U)\tensor\Q\to  G(V)\tensor\Q$ being induced by the difference of the two projections $V \to U$). 

As $X$ and $U$ are normal, there exists $\pi':Y\to U$ finite surjective such that $\pi'\pi$ factors as $\pi_s\pi_i$ with $\pi_s$, $\pi_i$ finite surjective, $\pi_s$ generically Galois and $\pi_i$ generically purely inseparable. Because of Lemma \ref{lemma:qfh-separation}, a diagram chase shows that the sheaf condition for $\pi$ is implied by the sheaf condition for $\pi_s$ and $\pi_i$ separately. In other words we can assume $\pi$ to be either generically Galois or generically purely inseparable.

We first assume $\pi$ to be generically Galois with Galois group $\Gamma$. Note that $U$, being normal, is the normalisation of $X$ in $\kappa(U)$ so the action of $\Gamma$ on the generic fibre extends to $U$ by functoriality of the normalisation . Moreover, the monomorphism of coherent sheaves $\mathcal{O}_X\to (\pi_*\mathcal{O}_Y)^\Gamma$ is an isomorphism, as can be checked on affine charts using the generic Galois property and normality. Hence by \cite[Expos\'e V Proposition 1.3]{SGA1}, $X$ is the categorical quotient of $U$ by $\Gamma$. In particular we have:
\[ G(X)=G(U)^\Gamma\subset G(U)\ .\]

Let $x$ be in the kernel of $G(U)\to G(V)$. Then by Lemma \ref{lemma:qfh-separation} it is also in
the kernel of $G(k(U))\to G(k(U)\tensor_{k(X)}k(U))$ (recall that $k(U)\tensor_{k(X)}k(U)$
is a product of fields, hence normal). By \'etale descent we have
$x\in G(k(U))^\Gamma$, in particular, our element is $\Gamma$-invariant, hence
in $G(X)$ by the above.

Consider now the case $\pi$ generically inseparable. By Zariski's connectedness theorem, a finite surjective morphism to a normal scheme which is generically purely inseparable is purely inseparable. In particular the diagonal morphism of $\pi$ is a surjective closed immersion, hence after reduction is an isomorphism. Hence the map $G(U)\tensor\Q \to G(V)\tensor\Q$ is the zero map (the identity minus the identity) and we have to show that $\pi^*\tensor\Q:G(X)\tensor\Q\to G(U)\tensor\Q$ is surjective. For this we will reduce to the case where $\pi$ is a relative Frobenius. 

Let us recall some notations. For $p$ prime, $q$ a prime power and an $S$-scheme $Z$, we write $Z^{(q)}$ for the base change of $Z$ along the absolute Frobenius of $S$, and $\Frob^{(q)}_{Z/S}:Z\to Z^{(q)}$ for the relative Frobenius.

 We can assume that $\pi$ is not an isomorphism. Let then $p>0$ be the generic characteristic of $X$. Since $X$ is irreducible, it is an $\mathbb{F}_p$-scheme. By \cite[Proposition 6.6]{Kollar}, there exists a power $q$ of $p$ and a morphism $\pi^{-q}:X\to U^{(q)}$ such that $\pi^{-q}\pi=\Frob^{(q)}_{U/S}$.
 Hence we are reduced to the case  Frobenius case as surjectivity of $(\Frob^{(q)}_{U/S})^*\tensor\Q$ implies surjectivity of  $\pi^*\tensor\Q$.

Let $f\in G(U)$. By functoriality of Frobenius, we get a morphism 
\[ f^{(q)}\in G^{(q)}(U^{(q)})\]
such that $f^{(q)}\Frob^{(q)}_{U/S}=\Frob^{(q)}_{G/S}f$. The commutative $S$-group scheme $G$ is flat, so by \cite[Expos\'e VII 4.3]{SGA3} there exists a Verschiebung morphism $\Ver^{(q)}_{G/S}:G^{(q)}\to G$ such that $\Ver^{(q)}_{G/S}\circ \Frob^{(q)}_{G/S}=q\id_{G}$. Put 
\[ g=\Ver^{(q)}_{G/S}f^{(q)}\in G(U^{(q)})\ .\]
Then $\pi^*(g)=pf$. We conclude that $\pi^*\tensor\Q$ is surjective.
\end{proof}

\begin{proof}[Proof of Proposition \ref{prop:qfh-descent}.]
Let $X\in\Nor$ and $\{p_i:Y_i\to X\}_{i\in I}$ a qfh-cover in $\Nor$. We have to check the sheaf condition for $\ul{G}_\Q$. 

The presheaf $\ul{G}_\Q$ satisfyies the sheaf condition for the covering of a scheme by the union of its connected components so that we can assume $X$ to be connected, hence integral. By \cite[Lemma 10.3]{SV1}, the cover $\{p_i\}$ admits a refinement to a cover $\{Z_i\to Z \to X\}_{j\in J}$ where $Z\to X$ is finite surjective and $\{Z_i\to Z\}$ is a Zariski cover of $Z$. Because of Lemma \ref{lemma:qfh-separation}, a diagram chase shows that it is implied by the sheaf condition for $Z\to X$ and $\{Z_i\to Z\}$ separately.

By Lemma \ref{lemma:finite}, the presheaf $\ul{G}_\Q$ satisfies the sheaf condition for the morphism $Z\to X$. As $\ul{G}_\Q$ is a Zariski-sheaf, the sheaf condition
is also satisfied for the cover $\{Z_i\}$ of $Z$. This concludes the proof.
\end{proof}

\section{Eilenberg-MacLane complexes for sheaves}\label{app_resolution}

The main object of study in this paper is the sheaf $\ul{G}$ on $\Sm/S$ associated to a commutative algebraic group space $G/S$. In the course of the proof of the main theorem, we need to compute several left derived functors applied to the object $M^\eff_1(G)$ it represents in $\DMeff(S)$. For this, we need a cofibrant resolution of $\ul{G}\otimes\Q$. We use a construction of Breen \cite{Breen} that is based on the work of Eilenberg and Maclane \cite{EM1}. 

The following theorem is the main result of this appendix.

\begin{theorem}
  \label{theorem:resolution}
Let $(\mathcal{S},\tau)$ be a Grothendieck site. 
We denote $\Z(-)$ the functor ``free abelian group sheaf'' (the sheafification of the sectionwise free abelian group functor).

There is a functor:
\[
A:\Sh_\tau(S,\Z)\ra \mathbf{Cpl}_{\geq 0}\Sh_\tau(S,\Z)
\] 
(where by $\mathbf{Cpl}_{\geq 0}$ we mean homological complexes in non-negative degrees) together with a natural transformation
\[
r:A\rightarrow (-)[0]
\]
satisfying the following properties:
\begin{enumerate}
\item \label{explicit_form} For all $\Gh\in \Sh_\tau(S,\Z)$ and $i\geq 0$, the sheaf $A(\Gh)_i$ is of the form $\bigoplus_{j=0}^{d(i)}\Z(\Gh^{a(i,j)})$ for some $d(i),a(i,j)\in\N$. 
\item \label{lifting} There is a natural transformation $\tilde{a}:\Z(-)[0]\ra A$ which lifts the addition map $a:\Z(-)\rightarrow \id$; that is, one has $a[0]=r\tilde{a}$.
\item \label{site_functoriality} The functor $A$ and the transformations $r$ and $\tilde{a}$ are compatible with pullbacks by morphisms of sites. 
\item \label{Q-resolution} The map $r\otimes \Q$ is a quasi-isomorphism.
\end{enumerate}
\end{theorem}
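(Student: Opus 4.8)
The plan is to use the classical Eilenberg--MacLane construction in a sheaf-theoretic, functorial form. For an abelian group $A$, Eilenberg and MacLane produced a free resolution of $A$ (as a trivial module, i.e. of $\Z$ in degree $0$ after a shift) by a complex whose terms are free abelian groups on Cartesian powers of the \emph{set} underlying $A$, functorially in $A$; Breen's reference \cite{Breen} (following \cite{EM1}) assembles these into an explicit complex. Concretely, one takes the normalized chain complex of the bar-type simplicial object, or equivalently the ``cube construction'', whose degree-$i$ term is a finite direct sum $\bigoplus_{j}\Z(A^{a(i,j)})$; this is the shape demanded by \eqref{explicit_form}. The first step is thus to recall this construction at the level of a single abelian group and verify that it is strictly functorial in $A$ and comes with the natural map $\tilde a\colon \Z(A)\to A_*$ (the inclusion of generators in degree $0$, or rather the map realizing the identity on $A$) lifting the addition map $a\colon\Z(A)\to A$, giving \eqref{lifting} at the group level.

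The second step is to sheafify. Since everything in sight is built out of the functors $\Z(-)$ and finite products, and the site $(\mathcal{S},\tau)$ has a well-behaved sheafification functor, one applies the construction objectwise on presheaves and then sheafifies, or more cleanly: observe that $A^n$ for $A$ an abelian group sheaf is the sheaf product, and $\Z(\Gh^n)$ is the free abelian group sheaf on it, so the degree-$i$ terms $A(\Gh)_i = \bigoplus_{j=0}^{d(i)} \Z(\Gh^{a(i,j)})$ make sense directly as sheaves and the boundary maps are inherited. Functoriality in $\Gh$ and the natural transformations $r$, $\tilde a$ descend formally. Compatibility with pullback along a morphism of sites \eqref{site_functoriality} follows because the inverse image functor is monoidal (commutes with finite products) and commutes with $\Z(-)$ and with sheafification; this is a direct check, not an obstacle.

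The real content is \eqref{Q-resolution}: that $r\otimes\Q$ is a quasi-isomorphism, i.e. that after tensoring with $\Q$ the Eilenberg--MacLane complex becomes acyclic except in degree $0$ where its homology is $\Gh_\Q$. Since sheaf homology can be computed stalkwise (on a site with enough points) or, in general, since the statement is about a complex of sheaves being exact and this can be tested after sheafification on representable-type covers, it reduces to the analogous statement for a single abelian group $A$: the complex $\bigl(\bigoplus_j \Z(A^{a(i,j)})\bigr)_i\otimes\Q$ computes, in homology, the group homology $H_*(A,\Q)$ shifted appropriately --- or more precisely it is the standard (normalized bar / cube) complex whose homology in positive degrees is a sum of terms built from $\mathrm{Tor}$-groups and the stable homology $H_*(K(A,n))$, all of which are torsion in positive degrees for the relevant range, hence killed by $\otimes\Q$. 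I would invoke the computation of Eilenberg--MacLane that the homology of this specific complex (the one giving a free resolution of $\Z$ over $\Z[A]$ in the trivial-module sense, or dually computing $H_*(A;\Z)$) is concentrated in degree $0$ rationally --- in fact the key input is that $H_i(A,\Z)$ for $i>0$ need not vanish, but the particular complex Breen uses is designed so that $r$ is an \emph{integral} resolution up to torsion, equivalently $r\otimes\Q$ is a quasi-isomorphism. This last point --- pinning down exactly which Eilenberg--MacLane-type complex has rationally-acyclic augmentation and citing the precise statement in \cite{Breen, EM1} --- is the main obstacle; the rest is bookkeeping. I would organize the argument as: (i) recall Breen's complex and its properties for abelian groups from \cite{Breen}; (ii) note the construction is functorial and made of $\Z(-)$ and products, hence sheafifies and commutes with site pullback, giving \eqref{explicit_form}--\eqref{site_functoriality}; (iii) for \eqref{Q-resolution}, reduce to exactness of a complex of sheaves, hence (after passing to a conservative family of points or working with stalks / on representables) to the statement for abelian groups, which is exactly the rational acyclicity established in \cite{Breen}.
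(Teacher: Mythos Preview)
Your overall architecture is sound, and in particular the observation that the construction is built from $\Z(-)$ and finite products, hence sheafifies and commutes with site pullback, is exactly what gives properties \eqref{explicit_form}--\eqref{site_functoriality}. However, there is a genuine gap: you do not identify the correct complex, and you acknowledge this yourself as ``the main obstacle''. The natural first guess --- a single normalized bar complex $B_N(\Z(A))$ for an abelian group $A$ --- does \emph{not} work. That complex computes the homology of the Eilenberg--MacLane space $K(A,1)$, i.e.\ the group homology $H_*(A,\Z)$; for abelian $A$ one has $H_*(A,\Q)\simeq \Lambda^*(A\otimes\Q)$, which is far from being concentrated in degree $0$. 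Your parenthetical about ``a free resolution of $\Z$ over $\Z[A]$ in the trivial-module sense, or dually computing $H_*(A;\Z)$'' is precisely this wrong complex.

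The construction the paper uses (following Breen) is more elaborate: one iterates the bar construction to form $A(\Gh,n)=B_N(A(\Gh,n-1))$, which computes $H_*(K(\Gh,n))$; one then truncates and shifts to get $A^n(\Gh)=(\tau_{>0}A(\Gh,n))[-n]$; and finally one takes the colimit $A(\Gh)=\bigcup_{n\geq 1}A^n(\Gh)$ along the suspension maps $S:A^n(\Gh)\to A^{n+1}(\Gh)$. The key input for \eqref{Q-resolution} is Cartan's computation (extended to sheaves by Breen) that $\Hh_i(A^n(\Gh))$ is a torsion sheaf for $0<i<n$, so that after tensoring with $\Q$ and passing to the colimit all positive-degree homology vanishes. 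In other words, one needs the \emph{stable} homology of Eilenberg--MacLane spaces, not the homology of $K(\Gh,1)$; you allude to stable homology in passing but do not articulate this stabilization mechanism, which is the heart of the construction. Once the correct complex is in hand, your reduction of \eqref{Q-resolution} to the case of a single abelian group (by working at the presheaf level section-wise and then sheafifying) is a valid alternative to citing Breen's sheaf-theoretic statement directly, as the paper does.
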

We give a sketch of the proof of Theorem \ref{theorem:resolution} based on \cite{Breen} and \cite{EM1}. 
\subsection*{Construction of $A(\Gh)$}
The construction of the chain complex $A(\Gh)$ is done in four steps.
\begin{enumerate}[(i)]
\item The chain complex $A(\Gh,0)$ is defined as $\Z(\Gh)[0]$, equipped with its natural ring structure.
\item To define recursively $A(\Gh,n+1)$ from $A(\Gh,n)$, one applies the \emph{normalized bar construction} $B_N$ defined in \cite[Chapter II]{EM1} (see also Remark \ref{remarkbar})  
\[ A(\Gh,n+1)=B_N(A(\Gh,n)) .\]

\item For all $n\geq 1$, we define $A^n(G)$ by taking the canonical truncation (intuitively passing to the reduced homology) and shifting:
\[ A^n(\Gh)=(\tau_{>0}A(\Gh,n))[-n]\ . \]

\item The bar construction comes together with a functorial suspension map 
\[ S:A(\Gh,n)[1]\rightarrow A(\Gh,n+1)\ .\]
For all $n\geq 1$, up to a change of sign in the differential, the suspension $S$ induces a morphism of complexes \[S:A^n(\Gh)\rightarrow A^{n+1}(\Gh) \ .\] The maps are well-defined by the vanishing properties in Lemma~\ref{vanishing A^n} below, and they are inclusions. Define the chain complex $A(\Gh)$ as 
\[ A(\Gh)=\bigcup_{n\geq 1}A^n(\Gh)\ .\]
\end{enumerate}

\begin{lemma}[\cite{Breen}]\label{vanishing A^n} Let $A(\Gh,n)_j$ be the $j$-th term of the chain complex $A(\Gh,n)$; then
\[ A(\Gh,n)_j=\begin{cases}\Z& n\geq 1,j=0,\\
                                 0& n\geq 1,0<j<n.\\
                                 \Z(G) & n\geq 1, j=n
		\end{cases}\]
		
		Moreover, the map $A(G,1)_1\rightarrow A(G,1)_0$ is zero. In particular, for all $n\geq 1$ the chain complex $A^n(\Gh)$ is concentrated in non-negative degrees and can also be described via the stupid truncation
\[ A^n(\Gh)=(\sigma_{>0}A(\Gh,n)[-n]\ ,\]
which is still concentrated in non-negative degrees.
\end{lemma}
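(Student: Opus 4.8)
The plan is to unwind the recursive definition of $A(\Gh,n)$ in terms of the normalized bar construction $B_N$ and use the basic connectivity properties of $B_N$. Recall that for a chain complex $C$ concentrated in non-negative degrees with $C_0 = \Z$ (an augmented algebra), the normalized bar complex $B_N(C)$ in degree $j$ is a sum of tensor products $C_{i_1}\otimes \cdots \otimes C_{i_k}$ over $k\geq 0$ and $i_1+\cdots+i_k+ k = j$ with each $i_a \geq 0$ (or, in the \emph{normalized} version, each tensor factor coming from $\overline{C} = \ker(C_0\to\Z)$ or from positive degree, so that the $k=j$ term uses only $C_0$-type contributions in a controlled way); in particular $B_N(C)_0 = \Z$ and $B_N(C)_j$ for small $j$ is governed by how connected $C$ is. The key input is the standard fact (in \cite[Chapter II]{EM1}) that if $C$ is $(n-1)$-connected in the sense that $C_0 = \Z$ and $C_j = 0$ for $0 < j < n$, then $B_N(C)$ is $n$-connected in the same sense, with $B_N(C)_0 = \Z$, $B_N(C)_j = 0$ for $0 < j < n+1$, and the bottom nonvanishing piece $B_N(C)_{n+1}$ identified with $C_n$ (the single bar $[\,c\,]$ with $c \in C_n$, there being no room for longer bars or for internal differentials to interfere).

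First I would establish the three displayed values by induction on $n$. The base case $n = 1$: here $A(\Gh,1) = B_N(A(\Gh,0)) = B_N(\Z(\Gh)[0])$. Since $A(\Gh,0)$ is concentrated in degree $0$ with $A(\Gh,0)_0 = \Z(\Gh)$, a direct inspection of the normalized bar complex gives $A(\Gh,1)_0 = \Z$, $A(\Gh,1)_1 = \overline{\Z(\Gh)} = \Z(\Gh)$ up to the splitting of the augmentation, wait --- more precisely the normalized bar construction kills the basepoint so $A(\Gh,1)_1 \cong \Z(\Gh)$ (the free group on $\Gh$, the normalization removing the degenerate/unit part appropriately), and $A(\Gh,1)_j = 0$ for $j > 1$ because there are no positive-degree elements in $A(\Gh,0)$ to build longer normalized bars, except powers of degree-$0$ elements which are removed by normalization. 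This already gives the stated values for $n=1$; I would also check directly from the bar differential that $A(\Gh,1)_1 \to A(\Gh,1)_0$ is zero (the differential on a single bar $[g]$ with $g$ in degree $0$ is $g - 1 - (\text{unit contributions})$, which vanishes after normalization). For the inductive step, assuming $A(\Gh,n)_0 = \Z$, $A(\Gh,n)_j = 0$ for $0<j<n$ and $A(\Gh,n)_n = \Z(\Gh)$, apply the connectivity statement for $B_N$ recalled above to $C = A(\Gh,n)$: this yields $A(\Gh,n+1)_0 = \Z$, $A(\Gh,n+1)_j = 0$ for $0 < j < n+1$, and $A(\Gh,n+1)_{n+1} \cong A(\Gh,n)_n = \Z(\Gh)$, which is exactly what is claimed.

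It remains to deduce the last sentence. By definition $A^n(\Gh) = (\tau_{>0} A(\Gh,n))[-n]$, i.e. one discards the degree-$0$ term $\Z$, keeps the truncated complex in degrees $> 0$, and shifts down by $n$. Since $A(\Gh,n)$ vanishes in degrees $1,\dots,n-1$, the truncation $\tau_{>0}A(\Gh,n)$ agrees with the stupid truncation $\sigma_{>0} A(\Gh,n)$ (there is nothing between degree $0$ and degree $n$ to distinguish the naive and the homological truncation), and after the shift $[-n]$ the resulting complex $A^n(\Gh)$ starts in degree $0$ (coming from $A(\Gh,n)_n$) and has no negative-degree terms; this is the asserted equality $A^n(\Gh) = (\sigma_{>0}A(\Gh,n))[-n]$ and its concentration in non-negative degrees. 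I do not expect any serious obstacle here: the entire argument is an induction whose engine is the elementary connectivity behaviour of the normalized bar construction, already recorded in \cite{EM1} and used in \cite{Breen}; the only mild care needed is in the $n=1$ base case, where one must check by hand that normalization removes precisely the degenerate degree-$0$ contributions so that $A(\Gh,1)$ has the stated shape and the differential $A(\Gh,1)_1\to A(\Gh,1)_0$ genuinely vanishes.
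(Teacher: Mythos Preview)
The paper itself gives no proof of this lemma; it simply records the statement and cites \cite{Breen}. Your inductive strategy via the connectivity behaviour of the normalized bar construction is exactly the standard argument behind Breen's statement, and your inductive step (if $C_0=\Z$, $C_j=0$ for $0<j<n$, $C_n=\Z(\Gh)$, then $B_N(C)$ has the analogous pattern shifted up by one) is correct as written.

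Your base case, however, contains a real error. You assert that $A(\Gh,1)_j=0$ for $j>1$ because ``powers of degree-$0$ elements are removed by normalization.'' This is false: normalization in the sense of \cite{EM1} kills only bars containing the \emph{unit}, not arbitrary products of degree-$0$ elements. The normalized bar complex of $\Z(\Gh)$ (concentrated in degree $0$) is the standard unbounded bar resolution; for instance $A(\Gh,1)_2\simeq\Z(\Gh\times\Gh)$, as in the description quoted in the proof of Lemma~\ref{lemma:Cartan}. Fortunately the lemma asserts nothing about $A(\Gh,n)_j$ for $j>n$, and your inductive step only uses the vanishing for $0<j<n$, so this mistake does not damage the conclusion --- but it shows a misunderstanding of what normalization does that you should correct. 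You are also visibly unsure whether $A(\Gh,1)_1$ is $\Z(\Gh)$ or its augmentation ideal; in the conventions of \cite{EM1,Breen} used here it is $\Z(\Gh)$ (again, compare the proof of Lemma~\ref{lemma:Cartan}), and your hedging (``up to the splitting of the augmentation, wait\dots'') should be replaced by a clean verification from the definition.
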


\begin{remark}\label{remarkbar}
Recall that the bar construction $B_N$ takes an augmented differential graded chain algebra and produces another; intuitively, if $D$ is an augmented dg-algebra computing the homology of a space $X$, the dg-algebra $B_N(D)$ should be thought of as an algebraic model for the homology of the loop space of $X$. Note that since we work with sheaves, the correct construction is to apply the bar construction sectionwise and then sheafify. 
\end{remark}

\subsection*{Construction of $r$ and end of the proof}
We first recall a qualitative version of Cartan's computation of the stable homology of $A(\Gh,n)$, as extended to the sheaf case by Breen (see \cite[Theorem 3]{Breen}): 
\begin{lemma}\label{lemma:Cartan}
Let $n\geq 1$. Then:
\begin{equation}\label{homology_computation_shifted}
  \Hh_{i}(A^n(\Gh))\simeq \begin{cases} 0&i<0, \\ \Gh& i=0,n\geq 1, \\ \text{is a torsion sheaf}& 0<i<n.\end{cases}
\end{equation}
Moreover, the isomorphisms $\Hh_0(A^n(\Gh))\simeq \Gh$ for $n\geq 1$ can be chosen such that
\begin{enumerate}
\item they are compatible with the suspension maps, and
\item the composition of maps of complexes 
  \[
  \Z(\Gh)[0]\rightarrow A^1(\Gh)\rightarrow \Hh_0(A^1(\Gh))\simeq \Gh[0]
  \]
  is the addition map $a_\Gh$ (here the map $\Z(\Gh)[0]\rightarrow A^1(\Gh)$ is the inclusion of the term in degree $0$). 
\end{enumerate}
\end{lemma}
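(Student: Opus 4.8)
The plan is to reduce everything to the analogous statement for \emph{constant} sheaves on a point (i.e.\ for ordinary abelian groups), which is the classical stable computation of Cartan, and then transfer it to the sheaf-theoretic setting via Breen's work. Concretely, one observes that the bar construction $B_N$, the truncations, the shifts, and the suspension maps are all defined sectionwise and then sheafified; consequently the homology sheaf $\Hh_i(A^n(\Gh))$ is the sheafification of the presheaf $U \mapsto H_i(A^n(\Gh(U)))$, where $A^n(-)$ on the right is the corresponding complex of abelian groups. Since sheafification is exact, it commutes with taking homology, so it suffices to control $H_i(A^n(M))$ functorially in an abelian group $M$. The reduction to an ordinary abelian group is legitimate because all the relevant constructions are compatible with the étale-local structure; this is precisely the content of \cite[Theorem 3]{Breen} (building on the Eilenberg--MacLane computations in \cite{EM1}).

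The key steps, in order, are as follows. First, recall that $A(\Gh,n)$ is (sectionwise) a chain-level model for the homology of the $n$-th Eilenberg--MacLane space $K(\Gh,n)$: this is the whole point of iterating the bar construction, normalised so that $H_*(A(\Gh,n))=H_*(K(\Gh,n),\Z)$. Second, apply the Hurewicz-type and connectivity statements: $K(\Gh,n)$ is $(n-1)$-connected, so $H_i=0$ for $0<i<n$, $H_0=\Z$, and $H_n\simeq\Gh$; after passing to reduced homology via $\tau_{>0}$ and shifting by $[-n]$, this becomes exactly $\Hh_0(A^n(\Gh))\simeq\Gh$ and $\Hh_i(A^n(\Gh))=0$ for $i<0$. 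Third, for $0<i<n$ invoke Cartan's theorem that the homology groups $H_{n+i}(K(\Gh,n),\Z)$ in this range are built out of $\Gh$ by finitely many applications of divided-power and transpotence operations together with $\mathrm{Tor}$, all of which are torsion when the input is in the ``stable range'' $i<n$; sheafifying, $\Hh_i(A^n(\Gh))$ is a torsion sheaf. Fourth, to see that the isomorphisms $\Hh_0(A^n(\Gh))\simeq\Gh$ are compatible with the suspension maps $S:A^n(\Gh)\to A^{n+1}(\Gh)$, note that $S$ is, up to sign, the chain-level avatar of the topological suspension $\Sigma K(\Gh,n)\to K(\Gh,n+1)$, which is an iso on $\pi_n=H_n$ by the Freudenthal suspension theorem; hence it is an iso on the degree-$0$ homology of the shifted truncated complexes, and the identifications with $\Gh$ are chosen to be the canonical ones coming from the Hurewicz isomorphism, so compatibility is automatic. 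Fifth, for the normalisation statement (2), unwind the definition of $A^1(\Gh)=(\tau_{>0}B_N(\Z(\Gh)[0]))[-1]$: in degree $0$ the complex $A^1(\Gh)$ receives the representable sheaf $\Z(\Gh)[0]$ as the inclusion of bottom-degree generators, and the composite down to $\Hh_0(A^1(\Gh))\simeq\Gh$ is, by construction of the bar differential, the map sending a generator $[g]$ to $g$, i.e.\ the addition map $a_\Gh$; this is a direct check on the low-degree part of $B_N$ and requires only Lemma \ref{vanishing A^n} to know that nothing in degrees $>0$ interferes.

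The main obstacle I expect is \emph{not} the qualitative homology computation itself --- that is genuinely classical and already done at the sheaf level by Breen --- but rather the bookkeeping needed to make the two compatibility assertions precise: namely pinning down the signs in the suspension-induced maps $S:A^n(\Gh)\to A^{n+1}(\Gh)$ (the ``up to a change of sign in the differential'' caveat from step (iv) of the construction) and checking that the chosen Hurewicz identifications $\Hh_0(A^n(\Gh))\simeq\Gh$ are strictly (not just up to sign or scalar) compatible with them, so that the colimit $A(\Gh)=\bigcup_n A^n(\Gh)$ has $\Hh_0(A(\Gh))\simeq\Gh$ with the addition map as the comparison to $\Z(\Gh)[0]$. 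This is where one must be careful, since a stray sign would obstruct gluing the $r$ on each $A^n(\Gh)$ into a well-defined $r:A(\Gh)\to\Gh[0]$. The cleanest way to handle this is to fix, once and for all, the sign conventions in $B_N$ as in \cite[Chapter II]{EM1}, verify the sign of $S$ on the bottom nonzero homology by hand in low degrees, and then propagate formally; everything else is a routine, if lengthy, transcription of \cite{Breen}.
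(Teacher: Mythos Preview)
Your proposal is correct and follows essentially the same approach as the paper: both rely on Breen's sheaf-theoretic version of Cartan's stable computation of $H_*(K(\Gh,n))$ for the homology statement, and both verify the addition-map normalisation by unwinding the low-degree terms of the bar construction. The paper is more direct---it simply cites \cite[Theorem 3]{Breen} for the homology and \cite[(eq.\ 1.16) p.22]{Breen} for suspension compatibility, then writes the bar differential $\Z(\Gh\times\Gh)\to\Z(\Gh)$ explicitly as $[g,h]\mapsto[g]+[h]-[g+h]$ (with $\Z(\Gh)\to\Z$ the zero map) to read off that $\Hh_0(A^1(\Gh))\simeq\Gh$ via the addition map---so the sign-bookkeeping you flag as the main obstacle is already absorbed into Breen's reference and the explicit low-degree formula, rather than requiring a separate topological (Hurewicz/Freudenthal) argument.
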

\begin{proof}
  The computation of $\Hh_i(A(\Gh,n))$ is contained in \cite[Theorem 3]{Breen}. It reads
\begin{equation}\label{homology_computation}
\Hh_{i}(A(\Gh,n))\simeq \begin{cases}0&0< i< n, \\ \Gh,& i=n,\\ \text{is a torsion sheaf},& n<i<2n,\end{cases}
\end{equation}
The computation of $\Hh_i(A^n(\Gh))$ follows from this and the vanishing properties of Lemma \ref{vanishing A^n}. Compatibility with suspensions is a consequence of \cite[(eq 1.16) p.22]{Breen}. The second assertion follows by a small computation from the following description of the first terms of $A(\Gh,1)$ (see \cite[p.19]{Breen}):
\[
\xymatrix@R=0.3cm{
\ldots \ar[r] & \Z(\Gh\times \Gh) \ar[r] & \Z(\Gh) \ar[r]^0 & \Z \\
&  [g,h] \ar@{|->}[r] & [g]+[h]-[g+h] & &
}
\] 
This finishes the proof of the Lemma.
\end{proof}

\begin{definition}We define the map
\[ r:A(\Gh)=\bigcup_{n\geq 1}A^n(\Gh)\to \Gh\]
via the compatible system of 
\[ A^n(\Gh)\to \Hh_0(A^n(\Gh))\isom \Gh\]
of Lemma \ref{lemma:Cartan}.
\end{definition}
\begin{proof}[Proof of the Theorem \ref{theorem:resolution}]

Property (\ref{explicit_form}) follows from \cite[eq. (3.8) and (3.9) p.31]{Breen}.

The morphism $r$ satisfies property~(\ref{lifting}) by construction and Lemma 
\ref{lemma:Cartan}. It induces an isomorphism $\Hh_0(r):\Hh_0(A(\Gh))\simeq \Gh$. The construction is sufficiently canonical to make property (\ref{site_functoriality}) clear. 

Finally as the computation of homology commutes with filtered colimits and $\Q$ is flat over $\Z$, we have for all $i>0$
\[
\Hh_i(A(\Gh))\otimes\Q\simeq \mathrm{colim}_{n\in\N} (\Hh_i(A^n(\Gh))\otimes \Q).
\]
By Equation \ref{homology_computation_shifted}, the last term vanishes for $n>i$. This proves property (\ref{Q-resolution}) and concludes the proof.
\end{proof}

\end{appendix}
\bibliographystyle{alpha}	

\newcommand{\etalchar}[1]{$^{#1}$}
\def\cprime{$'$}
\begin{thebibliography}{AEWH15}

\bibitem[ABD{\etalchar{+}}65]{SGA3}
Michael Artin, Jean-\'Etienne Bertin, Michel Demazure, Pierre Gabriel,
  Alexander Grothendieck, Michel Raynaud, and Jean-Pierre Serre.
\newblock {\em Sch\'emas en groupes. {F}asc. 2a: {E}xpos\'es 5 et 6}, volume
  1963/64 of {\em S\'eminaire de G\'eom\'etrie Alg\'ebrique de l'Institut des
  Hautes \'Etudes Scientifiques}.
\newblock Institut des Hautes \'Etudes Scientifiques, Paris, 1963/1965.

\bibitem[AEWH15]{AEH}
Giuseppe Ancona, Stephen Enright-Ward, and Annette Huber.
\newblock On the motive of a commutative algebraic group.
\newblock {\em Documenta Mathematica}, 20:807--858, 2015.

\bibitem[And05]{AndBour}
Yves Andr{\'e}.
\newblock Motifs de dimension finie (d'apr\`es {S}.-{I}. {K}imura, {P}.
  {O}'{S}ullivan{$\dots$}).
\newblock {\em Ast\'erisque}, (299):Exp. No. 929, viii, 115--145, 2005.
\newblock S{\'e}minaire Bourbaki. Vol. 2003/2004.

\bibitem[Art69]{Artin_FM1}
Michael Artin.
\newblock Algebraization of formal moduli. {I}.
\newblock In {\em Global {A}nalysis ({P}apers in {H}onor of {K}. {K}odaira)},
  pages 21--71. Univ. Tokyo Press, Tokyo, 1969.

\bibitem[Ayo07a]{Ayoub_these_1}
Joseph Ayoub.
\newblock Les six op\'erations de {G}rothendieck et le formalisme des cycles
  \'evanescents dans le monde motivique. {I}.
\newblock {\em Ast\'erisque}, (314):x+466 pp. (2008), 2007.

\bibitem[Ayo07b]{Ayoub_these_2}
Joseph Ayoub.
\newblock Les six op\'erations de {G}rothendieck et le formalisme des cycles
  \'evanescents dans le monde motivique. {II}.
\newblock {\em Ast\'erisque}, (315):vi+364 pp. (2008), 2007.

\bibitem[Ayo10]{Ayoubbetti}
Joseph Ayoub.
\newblock Note sur les op\'erations de {G}rothendieck et la r\'ealisation de
  {B}etti.
\newblock {\em J. Inst. Math. Jussieu}, 9(2):225--263, 2010.

\bibitem[Ayo14a]{Ayoubet}
Joseph Ayoub.
\newblock La r\'ealisation \'etale et les op\'erations de {G}rothendieck.
\newblock {\em Ann. Sci. \'Ecole Norm. Sup.}, 47:1--141, 2014.

\bibitem[Ayo14b]{Ayoub_Galois_1}
Joseph Ayoub.
\newblock L'alg\`ebre de {H}opf et le groupe de {G}alois motiviques d'un corps
  de caract\'eristique nulle, {I}.
\newblock {\em J. Reine Angew. Math.}, 693:1--149, 2014.

\bibitem[Bea86]{Beau}
Arnaud Beauville.
\newblock Sur l'anneau de {C}how d'une vari\'et\'e ab\'elienne.
\newblock {\em Math. Ann.}, 273(4):647--651, 1986.

\bibitem[BLR90]{Neron}
Siegfried Bosch, Werner L{\"u}tkebohmert, and Michel Raynaud.
\newblock {\em N\'eron models}, volume~21 of {\em Ergebnisse der Mathematik und
  ihrer Grenzgebiete (3) [Results in Mathematics and Related Areas (3)]}.
\newblock Springer-Verlag, Berlin, 1990.

\bibitem[Bre70]{Breen}
Lawrence Breen.
\newblock Extensions of abelian sheaves and {E}ilenberg-{M}ac{L}ane algebras.
\newblock {\em Invent. Math.}, 9:15--44, 1969/1970.

\bibitem[CD09]{CD}
Denis-Charles Cisinski and Fr{\'e}d{\'e}ric D{\'e}glise.
\newblock Triangulated categories of mixed motives.
\newblock {\em Preprint: http://arxiv.org/abs/0912.2110}, 2009.

\bibitem[CD15]{CDet}
Denis-Charles Cisinski and Fr{\'e}d{\'e}ric D{\'e}glise.
\newblock \'{E}tale motives.
\newblock {\em Compos. Math.}, to appear, 2015.

\bibitem[Cho12]{utsav}
Utsav Choudhury.
\newblock Motives of {D}eligne-{M}umford stacks.
\newblock {\em Adv. Math.}, 231(6):3094--3117, 2012.

\bibitem[Dre13a]{drew-oberwolfach}
Brad Drew.
\newblock Hodge realizations of triangulated mixed motives.
\newblock {\em Oberwolfach reports}, 10:1884--1887, 2013.

\bibitem[Dre13b]{drew-thesis}
Brad Drew.
\newblock {\em {R}\'ealisations tannakiennes des motifs mixtes triangul\'es}.
\newblock PhD thesis, Universit\'e Paris 13, Available on
  http://www-bcf.usc.edu/~bdrew, 2013.

\bibitem[Eke90]{Eke}
Torsten Ekedahl.
\newblock On the adic formalism.
\newblock In {\em The {G}rothendieck {F}estschrift, {V}ol.\ {II}}, volume~87 of
  {\em Progr. Math.}, pages 197--218. Birkh\"auser Boston, Boston, MA, 1990.

\bibitem[EML53]{EM1}
Samuel Eilenberg and Saunders Mac~Lane.
\newblock On the groups of {$H(\Pi,n)$}. {I}.
\newblock {\em Ann. of Math. (2)}, 58:55--106, 1953.

\bibitem[FK88]{FK}
Eberhard Freitag and Reinhardt Kiehl.
\newblock {\em \'{E}tale cohomology and the {W}eil conjecture}, volume~13 of
  {\em Ergebnisse der Mathematik und ihrer Grenzgebiete (3) [Results in
  Mathematics and Related Areas (3)]}.
\newblock Springer-Verlag, Berlin, 1988.
\newblock Translated from the German by Betty S. Waterhouse and William C.
  Waterhouse, With an historical introduction by J. A. Dieudonn{\'e}.

\bibitem[Ful98]{Ful}
William Fulton.
\newblock {\em Intersection theory}, volume~2 of {\em Ergebnisse der Mathematik
  und ihrer Grenzgebiete. 3. Folge. A Series of Modern Surveys in Mathematics
  [Results in Mathematics and Related Areas. 3rd Series. A Series of Modern
  Surveys in Mathematics]}.
\newblock Springer-Verlag, Berlin, second edition, 1998.

\bibitem[Gro66]{EGAIV_3}
Alexander Grothendieck.
\newblock \'{E}l\'ements de g\'eom\'etrie alg\'ebrique. {IV}. \'{E}tude locale
  des sch\'emas et des morphismes de sch\'emas. {III}.
\newblock {\em Inst. Hautes \'Etudes Sci. Publ. Math.}, (28):255, 1966.

\bibitem[Har]{thesis-Harrer}
Daniel Harrer.
\newblock {\em in preparation}.
\newblock PhD thesis, Albert-Ludwigs-Universit\"at Freiburg.

\bibitem[HK15]{HK14}
Annette Huber and Guido Kings.
\newblock Polylogarithms for families of commutative group schemes.
\newblock {\em Preprint available on arXiv:1505.04574}, 2015.

\bibitem[Ivo14]{Ivorra_Hodge}
Florian Ivorra.
\newblock Perverse, hodge and motivic realizations of {\'e}tale motives.
\newblock {\em Available on
  http://hal.archives-ouvertes.fr/hal-00978894/PDF/2014\_Ivorra\_HPR10\_HAL\_.pdf},
  2014.
\newblock 50 pages.

\bibitem[Kim05]{Kim}
Shun-Ichi Kimura.
\newblock Chow groups are finite dimensional, in some sense.
\newblock {\em Math. Ann.}, 331(1):173--201, 2005.

\bibitem[Knu71]{Knutson}
Donald Knutson.
\newblock {\em Algebraic spaces}.
\newblock Lecture Notes in Mathematics, Vol. 203. Springer-Verlag, Berlin-New
  York, 1971.

\bibitem[Kol97]{Kollar}
J{\'a}nos Koll{\'a}r.
\newblock Quotient spaces modulo algebraic groups.
\newblock {\em Ann. of Math. (2)}, 145(1):33--79, 1997.

\bibitem[K{\"u}n94]{Ku1}
Klaus K{\"u}nnemann.
\newblock On the {C}how motive of an abelian scheme.
\newblock In {\em Motives}, volume 55.1 of {\em Proceedings of Symposia in pure
  mathematics}, pages 189--205. American mathematical society, 1994.

\bibitem[LMB00]{LMB}
G{\'e}rard Laumon and Laurent Moret-Bailly.
\newblock {\em Champs alg\'ebriques}, volume~39 of {\em Ergebnisse der
  Mathematik und ihrer Grenzgebiete. 3. Folge. A Series of Modern Surveys in
  Mathematics [Results in Mathematics and Related Areas. 3rd Series. A Series
  of Modern Surveys in Mathematics]}.
\newblock Springer-Verlag, Berlin, 2000.

\bibitem[Ols06]{Olsson_res}
Martin~C. Olsson.
\newblock {$\underline {\rm Hom}$}-stacks and restriction of scalars.
\newblock {\em Duke Math. J.}, 134(1):139--164, 2006.

\bibitem[PL]{thesis-simon}
Simon Pepin~Lehalleur.
\newblock {\em Relative $1$-motives}.
\newblock PhD thesis, Universit\"at Z\"urich.

\bibitem[Rom11]{Romagny_connected}
Matthieu Romagny.
\newblock Composantes connexes et irr\'eductibles en familles.
\newblock {\em Manuscripta Math.}, 136(1-2):1--32, 2011.

\bibitem[SGA72]{SGA4_I}
{\em Th\'eorie des topos et cohomologie \'etale des sch\'emas. {T}ome 1:
  {T}h\'eorie des topos}.
\newblock Lecture Notes in Mathematics, Vol. 269. Springer-Verlag, Berlin-New
  York, 1972.
\newblock S{\'e}minaire de G{\'e}om{\'e}trie Alg{\'e}brique du Bois-Marie
  1963--1964 (SGA 4), Dirig{\'e} par M. Artin, A. Grothendieck, et J. L.
  Verdier. Avec la collaboration de N. Bourbaki, P. Deligne et B. Saint-Donat.

\bibitem[SGA03]{SGA1}
{\em Rev\^etements \'etales et groupe fondamental ({SGA} 1)}.
\newblock Documents Math\'ematiques (Paris) [Mathematical Documents (Paris)],
  3. Soci\'et\'e Math\'ematique de France, Paris, 2003.
\newblock S{\'e}minaire de g{\'e}om{\'e}trie alg{\'e}brique du Bois Marie
  1960--61. [Algebraic Geometry Seminar of Bois Marie 1960-61], Directed by A.
  Grothendieck, With two papers by M. Raynaud, Updated and annotated reprint of
  the 1971 original [Lecture Notes in Math., 224, Springer, Berlin; MR0354651
  (50 \#7129)].

\bibitem[Sug14]{Sug}
Rin Sugiyama.
\newblock Motivic homology of semiabelian varieties.
\newblock {\em Doc. Math.}, 19:1061--1084, 2014.

\bibitem[SV96]{SV1}
Andrei Suslin and Vladimir Voevodsky.
\newblock Singular homology of abstract algebraic varieties.
\newblock {\em Invent. Math.}, 123(1):61--94, 1996.

\bibitem[SV00]{RCCS}
Andrei Suslin and Vladimir Voevodsky.
\newblock Relative cycles and {C}how sheaves.
\newblock In {\em Cycles, transfers, and motivic homology theories}, volume 143
  of {\em Ann. of Math. Stud.}, pages 10--86. Princeton Univ. Press, Princeton,
  NJ, 2000.

\bibitem[{Vez}14]{Vezzani}
Alberto {Vezzani}.
\newblock {Effective motives with and without transfers in characteristic $p$}.
\newblock {\em ArXiv e-prints}, May 2014.

\bibitem[Voe96]{VHS}
Vladimir Voevodsky.
\newblock Homology of schemes.
\newblock {\em Selecta Math. (N.S.)}, 2(1):111--153, 1996.

\end{thebibliography}
\newcommand{\etalchar}[1]{$^{#1}$}
\def\cprime{$'$}

\end{document}